\documentclass[reqno,11pt,a4paper]{scrartcl}
\usepackage{amsmath,amssymb}
\usepackage{paralist}
\usepackage[colorlinks=true,pdfpagelabels]{hyperref}
\hypersetup{urlcolor=red, citecolor=blue}
\usepackage[top=1in,bottom=1in,left=1in,right=1in]{geometry}
\usepackage{amsmath,amsthm,amssymb}
\vfuzz2pt 
\hfuzz4pt 
\newtheorem{thm}{Theorem}[section]
\newtheorem{cor}[thm]{Corollary}
\newtheorem{lem}[thm]{Lemma}

\theoremstyle{definition}
\newtheorem{defn}{Definition}[section]
\newtheorem{rem}[thm]{Remark}
\numberwithin{equation}{section}

\newcommand{\set}[1]{\left\{#1\right\}}

\newcommand{\eps}{\varepsilon}

\newcommand{\Om}{\Omega}
\newcommand{\Ombar}{\overline{\Om}}
\newcommand{\R}{\mathbb{R}}
\newcommand{\N}{\mathbb{N}}
\newcommand{\io}{\int_\Omega}
\newcommand{\intdom}{\int_{\partial\Omega}}
\newcommand{\intnt}{\int_0^t}
\newcommand{\norm}[2][]{\left\Vert#2\right\Vert_{#1}}
\newcommand{\Tmax}{T_{max}}
\newcommand{\na}{\nabla}
\newcommand{\phii}{\varphi}
\newcommand{\ddt}{\frac{d}{dt}}
\newcommand{\ubar}{\overline{u}}
\newcommand{\ue}{u_\eps}
\newcommand{\ve}{v_\eps}
\newcommand{\we}{w_\eps}
\newcommand{\De}{D_\eps}
\newcommand{\intnT}{\int_0^T}
\newcommand{\wstarto}{\overset{*}{\rightharpoonup}}

\usepackage{color}

\newcommand{\systemref}{\eqref{ueq}-\eqref{ic}}
\setlength{\parindent}{0em}
\newcommand{\nn}{\nonumber}
\begin{document}
\date{\today}

\title
      {Boundedness in a chemotaxis-haptotaxis model with nonlinear diffusion}%

\author{Yan Li\thanks{Department of Mathematics, Southeast University, Nanjing 21189, PR China; Institut f\"ur Mathematik, Universit\"at Paderborn, Warburger Str. 100, 33098 Paderborn, Germany; email: \mbox{yezi.seu@gmail.com}} \ and  Johannes Lankeit\thanks{Institut f\"ur Mathematik, Universit\"at Paderborn, Warburger Str. 100, 33098 Paderborn, Germany; email: \mbox{johannes.lankeit@math.upb.de}}}



\maketitle
\begin{abstract}
This article deals with an initial-boundary value problem for the coupled chemotaxis-haptotaxis system with nonlinear diffusion
\begin{eqnarray*}
\left\{\begin{array}{lll}
     \medskip
     u_t=\nabla\cdot(D(u)\nabla u)-\chi\nabla\cdot(u\nabla v)-\xi\nabla\cdot(u\nabla w)+\mu u(1-u-w),\ \ \ &x\in \Omega,\ t>0,\\
      \medskip
     v_t=\Delta v-v+u,\ \ &x\in \Omega,\ t>0,\\
      \medskip
     w_t=-vw\ \ &x\in \Omega,\ t>0,
 \end{array}\right.
\end{eqnarray*}
under homogeneous Neumann boundary conditions in a bounded smooth domain $\Omega\subset\mathbb{R}^n$, $n=2, 3, 4$, where $\chi, \xi$ and $\mu$ are given nonnegative parameters.  The diffusivity $D(u)$ is assumed to satisfy $D(u)\geq\delta u^{m-1}$ for all $u>0$ with some $\delta>0$.  
It is proved that for sufficiently regular initial data global bounded solutions exist whenever $m>2-\frac{2}{n}$.
For the case of non-degenerate diffusion (i.e. $D(0)>0$) the solutions are classical; for the case of possibly degenerate diffusion ($D(0)\geq 0$), the existence of bounded weak solutions is shown.\\[0.2cm]
{\bf Math Subject Classification (2010):} 35K55 (primary), 35B40, 92C17\\
{\bf Keywords:} global existence, boundedness, nonlinear diffusion, chemotaxis, haptotaxis
\end{abstract}

\section{Introduction}
Already in early stages of cancer, malignant tumours may possess the ability to invade tissue in their neighbourhood, with harmful effects on the organism (\cite{liotta2000cancer}). 
Among the many mathematical models that have been developed for the description of the progress of cancer in different stages (see, for instance,   \cite{bellomo2008,chaplain2006,lowengrub2010nonlinear,gatenby_gawlinski,perumpanani_byrne,chaplain_anderson} and the refercences therein), in \cite{chaplain2006} Chaplain and Lolas introduced the following chemotaxis-haptotaxis system as a model describing the process of cancer invasion:
 \begin{eqnarray}\label{cthtfull}
\left\{\begin{array}{lll}
     \medskip
     u_t=D\Delta u-\nabla\cdot(\chi u\nabla v)-\nabla\cdot(\xi u\nabla w)+\mu u(1-u-w),\ \ \ &x\in \Omega,\ t>0,\\
      \medskip
     \tau v_t=\Delta v-v+u,\ \ &x\in \Omega,\ t>0,\\
      \medskip
     w_t=-vw+\eta w(1-u-w),\ \ &x\in \Omega,\ t>0.
 \end{array}\right.
\end{eqnarray}
Herein, $D, \chi, \xi, \mu>0$, $\eta\ge 0$ and $\tau\in\{0,1\}$ are given parameters and $u$ denotes the density of cancer cells, $v$ represents the concentration of so-called matrix degrading enzymes (MDEs) and $w$ is used to describe the non-diffusive concentration of the extracellular matrix (ECM). In addition to their undirected random movement, cancer cells are attracted by MDE and macromolecules that are bound in the ECM. Accordingly, their motion is biased toward higher concentrations of these substances with this process being known as chemotaxis, if the attractant is diffusible, and as haptotaxis for attractants fixed in the tissue. In addition, cancerous cells reproduce and compete amongst themselves and with healthy cells for space and nutrients, so that source terms of logistic type arise in the first equation. Matrix degrading enzymes, which are produced by cancer cells, diffuse, decay, and decompose the tissue around them for more living space. A choice of $\eta>0$ embodies the ability of the
ECM to remodel back to a normal level. However, since degradation occurs much faster than the re-establishment of the tissue, a choice of $\eta=0$ seems justified, and, indeed, is taken in many of the mathematical works concerning systems like \eqref{cthtfull}, including the present one. 

If all effects of the extracellular matrix are ignored by letting $w\equiv 0$ and if additionally $\mu=0$, \eqref{cthtfull} turns into the famous Keller-Segel model \cite{Keller-Segel-70} for chemotaxis, which has brought forth a large amount of mathematical literature during the last decades. We refer to the survey articles \cite{horstmann_I,hillen_painter,BBTW}. In this context one point of particular interest is the question whether solutions exist that blow-up in finite time or whether all solutions are global and, maybe, even bounded. And in fact, blow-up is known to occur and, according to more recent results, to be a generic phenomenon if $n>2$ or if $n=2$ and the initial mass $\io u_0$ is sufficiently large (\cite{jaeger_luckhaus_92,herrero_velazquez_97,winkler_11_blowuphigherdim,mizoguchi_winkler_13}).

The presence of logistic terms ($\mu>0$, but still $w\equiv 0$) not only leads to colourful dynamics (cf. \cite{kuto_osaki_sakurai_tsujikawa_12,painter_hillen_11,osaki2002exponential}), but presumably supports the global solvability as well. Nevertheless, proofs of this belief, 
both for the parabolic-elliptic ($\tau=0$) variant \cite{tello2007} and for the parabolic-parabolic ($\tau=1$) counterpart \cite{Winkler-10a}, require the restriction to the two-dimensional setting or that of sufficiently large values of $\mu>0$.
Although there are some closely related models where solutions are global  (cf. \cite{baghaei_hesaaraki_13,xinru_14,tao_11,tao_winkler_12,2015arXiv150302387Y,xiang2015boundedness}) and although global weak solutions are known to exist (\cite{lankeit2015eventual}), which, under certain conditions on the parameters, become smooth after some time (\cite{lankeit2015eventual}), it is still unknown whether finite-time blow-up may occur if $n\ge 3$ and $\mu>0$ is small.

Returning to models featuring a nontrivial third component, let us first report on
some results on haptotaxis-only models, i.e. models with $\chi=0$. For $\chi=\mu=\eta=0$, in \cite{morales_rodrigo08}, the local existence and uniqueness of classical solutions have been shown.
In \cite{marciniakczochraptashnyk}, a similar model (with nonlinear kinetics of the ECM, in that $+u$ in the second equation was replaced by $+uw$ \cite{perumpanani_byrne}) with $\chi=0=\eta$ but $\mu>0$ was proven to have global bounded weak solutions; the existence of a global classical solution was established in \cite{tao_zhu}. Asymptotic properties of solutions were investigated in \cite{lictcanu2010asymptotic}. For another related model (see \cite{anderson}), which additionally involves an equation governing the evolution of oxygen or other nutrients, in \cite{WalkerWebb} global existence was shown.

Also for a haptotaxis system including the effect of remodeling tissue ($\eta>0$) global classical solutions are known to exist, at least under the condition that the logistic growth is strong if compared to the effects of haptotaxis and regrowth, more precisely, if $\mu>\xi\eta$ \cite{tao_remodeling,fan_zhao}. 

As to models involving both chemotactic and haptotactic effects, there have been results guaranteeing global existence of classical solutions for $\eta=0$ under the condition that the chemotactic and haptotactic densities depend on the value of $u$ in such a way that $\chi(u)u$ and $\xi(u)u$ are bounded \cite{tao_cui,fan_zhao}.

For system \eqref{cthtfull} with instantaneous diffusion of enzymes and without tissue remodeling (i.e. $\tau=0=\eta$), Tao and Wang \cite{tao_wang_parell} proved global existence of solutions for $n=2$ or $n=3$ and sufficiently large $\mu$. Tao and Winkler \cite{taowk_bdnessandstabil} gave an explicit condition on $\mu$ to ensure global existence and boundedness and investigated attraction of solutions to the steady state $(1,1,0)$. In \cite{tao2014dominance}, they improved the condition on $\mu$, so that it coincides with the best one known for the parabolic-elliptic Keller-Segel system with logistic source (\cite{tello2007}), and also gave an explicit smallness condition on $w_0$, under which $w$ asymptotically becomes negligible.
Existence and uniqueness of a global classical solution to \eqref{cthtfull} with $\eta>0$ and $\tau=0$ have been obtained in \cite{tao2014energy} in the spatially two-dimensional setting.

The existence of global solutions to the fully parabolic system \eqref{cthtfull} (that is, $\tau=1$) has been established in \cite{tao2008global} under the condition of $\eta=0$ and $\mu$ being sufficiently large as compared to $\chi$.
Recently, Tao \cite{tao2014boundedness} and Cao \cite{cao2015boundedness} proved the boundedness of solutions in two- or three-dimensional domains, respectively.

The models mentioned above described the random part of the motion of cancer cells by linear diffusion. As pointed out in \cite[Section 6]{szymanska2009mathematical}, more appropriately from a physical point-of-view, however, it might be considered as movement in a porous medium. We will therefore assume $D$ to be a nonlinear function of $u$ and shall deal with
the following chemotaxis-haptotaxis system
\begin{align}
     u_t=&\nabla\cdot(D(u)\nabla u)-\chi\nabla\cdot(u\nabla v)-\xi\nabla\cdot(u\nabla w)+\mu u(1-u-w),\ \ \ &x\in \Omega,\ t>0,\label{ueq}\\
     v_t=&\Delta v-v+u,\ \ &x\in \Omega,\ t>0,\label{veq}\\
     w_t=&-vw,\ \ &x\in \Omega,\ t>0,\label{weq}\\
      \frac{\partial u}{\partial \nu}&=\frac{\partial w}{\partial \nu}=\frac{\partial v}{\partial \nu}=0,\ \ &x\in\partial\Omega,\ t>0,\label{rb}\\
      u(x, 0)=&u_0(x),\ v(x, 0)=v_0(x),\ w(x, 0)=w_0(x)\ \ &x\in\Omega\label{ic}
\end{align}
in a bounded domain $\Omega\subset \mathbb{R}^n$ with smooth boundary, $n\in\{2, 3, 4\}$.  The functions $u_0, v_0, w_0$ are supposed to satisfy the smoothness assumptions
\begin{eqnarray}\label{eq:initdatacond}
\left\{\begin{array}{lll}
     \medskip
     u_0\in C^0(\bar{\Omega}) \mbox{ with } u_0\geq0 \mbox{ in }\Omega\; \mbox{ and }\ u_0\not\equiv0,\\
      \medskip
     v_0\in\ W^{1, \infty}(\Omega) \mbox{ with }v_0\geq0\ \mbox{ in } \Omega,\\
      \medskip
     w_0\in\ C^{2+\alpha}(\bar{\Omega}) \mbox{ for some }\alpha>0 \mbox{ with } w_0>0 \mbox{ in }\ \bar{\Omega} \mbox{ and } \frac{\partial w_0}{\partial\nu}=0\ \mbox{on } \partial\Omega.
 \end{array}\right.
\end{eqnarray}
Due to the condition $\frac{\partial w_0}{\partial\nu}=0$ it is already ensured that $\frac{\partial w}{\partial \nu}=0$ on $\partial \Om$ and for $t>0$, so that \eqref{rb} is equivalent to
\[
 \frac{\partial u}{\partial \nu}-\chi u\frac{\partial v}{\partial \nu}-\xi u\frac{\partial w}{\partial \nu}=\frac{\partial v}{\partial \nu}=0\qquad x\in\partial\Om,t>0.
\]
We furthermore assume that 
\begin{eqnarray}\label{D12}
D\in C^2([0, \infty)) \quad \mbox{ and }\quad D(u)\geq \delta u^{m-1}\;\; \mbox{for all } u>0
\end{eqnarray}
with some $\delta>0$ and $m>1$ which will be subject to additional assumptions. In addition, if $D(u)$ satisfies
\begin{eqnarray}\label{D:nondeg}
D(0)>0,
\end{eqnarray}
the diffusion is non-degenerate and one may hope for classical solutions.

For certain values of $m$, in \cite{tao2008global} global existence of classical solutions to \systemref\ was established under these conditions.
Nonetheless, the question of boundedness of solutions to system \systemref\ is still open.
Thus it is meaningful to analyze the following question:
\begin{equation}\label{Q1}\tag{Q1}
\mbox{Which\ size\ of\ }m\mbox{\ is\ sufficient\ to\ ensure\ boundedness\ of\ solutions\ to\ \systemref?}
\end{equation}

Variants of the quasilinear Keller-Segel system with logistic source have been studied in \cite{li2015boundedness,xinru_14,wang2014boundedness,wang2014quasilinear,cao_zheng,zheng} for instance.
Inter alia, Li and Xiang \cite{li2015boundedness} showed global existence of bounded solutions to a fully parabolic quasilinear Keller-Segel system with logistic source and degenerate diffusion, that is, without assumption \eqref{D:nondeg}.
Hence another natural question to ask is:
\begin{equation}\label{Q2}\tag{Q2}
\mbox{Do bounded solutions to \systemref\ with degenerate diffusion exist?}
\end{equation}

It is our goal in this work to give answers to (Q1) and (Q2).  Namely, for non-degenerate and degenerate diffusion both, we will show the existence of global-in-time solutions to \systemref\ that are uniformly bounded. Our main results read as follows:

\begin{thm}\label{Theorem1}
Let $n\in\{2, 3, 4\}$ and $\Omega\subset\mathbb{R}^n$ be a bounded domain with smooth boundary and $\chi, \xi, \mu>0$.
Suppose that the initial data $(u_0, v_0, w_0)$ satisfy $(\ref{eq:initdatacond})$. Then for any $\delta>0$ and
\begin{eqnarray}\label{m}
m>2-\frac{2}{n}
\end{eqnarray}
there is $C>0$ such that for any function $D$ satisfying \eqref{D12} and \eqref{D:nondeg}, system \systemref\ has a classical solution $(u,v,w)\in\left(C^0(\bar{\Omega}\times[0,\infty)\cap C^{2, 1}(\Ombar\times(0,\infty)))\right)^3$ which exists globally in time and satisfies
\begin{equation}\label{eq:defbdness}
\norm[L^\infty(\Om)]{u(\cdot,t)}+\norm[W^{1,\infty}(\Om)]{v(\cdot,t)}+\norm[L^\infty(\Om)]{w(\cdot,t)}\leq C
\end{equation}
for all $t\in(0,\infty)$.
\end{thm}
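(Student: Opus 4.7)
The plan is to follow the by-now standard blueprint for quasilinear chemotaxis-haptotaxis systems: local existence plus an extensibility criterion, a uniform $L^p$-estimate on $u$ for a sufficiently large $p$, and a semigroup/Moser bootstrap to the $L^\infty$-level. I would first invoke a standard Amann-type local existence theorem to produce a maximal classical solution $(u,v,w)$ on $[0,T_{\max})$ together with the criterion that $T_{\max}<\infty$ enforces $\|u(\cdot,t)\|_{L^\infty(\Om)}+\|v(\cdot,t)\|_{W^{1,\infty}(\Om)}\to\infty$. Nonnegativity of $u$ and $v$ is standard; solving \eqref{weq} along characteristics as $w(x,t)=w_0(x)\exp\!\bigl(-\intnt v(x,s)\,ds\bigr)$ immediately yields $0\leq w\leq\|w_0\|_{L^\infty(\Om)}$, and integrating \eqref{ueq} over $\Om$ and exploiting the logistic damping $-\mu u^2$ gives the uniform bound $\|u(\cdot,t)\|_{L^1(\Om)}\leq C$.

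The core of the argument is a uniform $L^p$-bound on $u$ for some $p$ large enough that standard heat-semigroup smoothing applied to \eqref{veq} can subsequently deliver $W^{1,\infty}$-control of $v$. Testing \eqref{ueq} against $u^{p-1}$ and using \eqref{D12} produces a dissipation controlling $\io|\nabla u^{(m+p-1)/2}|^2$ from below. The chemotactic contribution, after integration by parts, takes the form $\tfrac{(p-1)\chi}{p}\io\nabla u^p\cdot\nabla v$, which I would split by Young's inequality into a piece absorbable into the dissipation and a term of type $C\io u^{p-m+1}|\nabla v|^2$. The haptotactic counterpart $-\tfrac{(p-1)\xi}{p}\io u^p\Delta w$ is more delicate since $w$ carries no diffusion of its own; here I would differentiate \eqref{weq} to obtain the pointwise identities
\[
  \tfrac12\partial_t|\nabla w|^2=-v|\nabla w|^2-w\,\nabla v\cdot\nabla w,\qquad
  \partial_t\Delta w=-v\Delta w-2\nabla v\cdot\nabla w-w\Delta v,
\]
and couple the evolution of $\io u^p$ to simultaneous $L^r$-estimates for $\nabla w$ and $\Delta w$, ultimately reducing every bad term to either $\io u^{p+1}$ (absorbed by the logistic loss $-\mu\io u^{p+1}$) or to integrals involving $\nabla v$.

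The remaining $\nabla v$-integrals are controlled by combining standard $L^p$-$L^q$-smoothing of the Neumann heat semigroup for \eqref{veq} with a Gagliardo-Nirenberg interpolation between $\|u^{(m+p-1)/2}\|_{L^2(\Om)}$ (paid for by the diffusion dissipation), the already-controlled $\|u\|_{L^1(\Om)}$, and $\|u^{p+1}\|_{L^1(\Om)}$ (paid for by the logistic term). The algebraic condition $m>2-\tfrac{2}{n}$ is precisely the threshold that keeps the resulting Gagliardo-Nirenberg exponent on the absorbable side so that a Gronwall-type argument for $\io u^p$ closes, and this is the place where the dimension $n\in\{2,3,4\}$ enters, through the interpolation exponents. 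Once $\|u(\cdot,t)\|_{L^p(\Om)}$ is uniformly bounded for some $p>\tfrac{n}{2}$, semigroup estimates upgrade $v$ to $W^{1,\infty}(\Om)$; a Moser iteration on \eqref{ueq} using this control together with the already-established bounds on $w$, $\nabla w$ and $\Delta w$ then produces $\|u(\cdot,t)\|_{L^\infty(\Om)}\leq C$, the extensibility criterion forces $T_{\max}=\infty$, and \eqref{eq:defbdness} follows.

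I expect the main obstacle to be precisely the $L^p$-step: the powers of $u$ appearing in the diffusion dissipation, the chemotactic and haptotactic contributions, the $\nabla v$-interpolation, and the logistic loss must all be balanced simultaneously, while the auxiliary estimates for $\nabla w$ and $\Delta w$ must not deteriorate as $p$ grows. Pressing the threshold all the way down to $m>2-\tfrac{2}{n}$ for every $n\in\{2,3,4\}$ is tight; it seems unavoidable to treat the chemotactic and haptotactic contributions in a coupled, rather than independent, fashion, and that coupling is where the technical heart of the argument will lie.
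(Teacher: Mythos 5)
Your overall architecture (local existence with an extensibility criterion, mass bound, an $L^p$-estimate for large $p$, semigroup smoothing for $v$, Moser iteration for $u$) matches the paper's, but the step you flag as the technical heart --- the haptotaxis term --- is handled by a route that does not close, and the idea that actually makes the paper work is missing. You propose to differentiate \eqref{weq} and run simultaneous $L^r$-estimates for $\na w$ and $\Delta w$ alongside $\io u^p$. The evolution identity for $\Delta w$ contains $-w\Delta v$ and $-2\na v\cdot\na w$, so any $L^r$-bound on $\Delta w$ requires second-order (maximal-regularity type) control of $v$, which at that stage is only available from the very $L^p$-bound on $u$ you are trying to prove; this circle is precisely why the older coupled-estimate approach (Tao--Wang) needs $\mu$ large or $n=2$ and cannot reach $m>2-\frac2n$ for $n=3,4$. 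The paper instead uses the observation of Tao (Lemma \ref{lem:bdDeltaw}): writing $w=w_0\exp(-\intnt v)$ and computing $\Delta w$ explicitly, the dangerous quadratic term $-w_0|\intnt\na v|^2$ appears with a \emph{favorable sign}, and $\intnt\Delta v$ is rewritten via \eqref{veq} so that one obtains the one-sided pointwise bound $-\Delta w\le \norm[L^\infty(\Om)]{w_0}\,v+K$ with $K$ depending only on $w_0$. After integrating by parts, the haptotaxis contribution $\frac{\xi(p-1)}{p}\io u^p(-\Delta w)$ then reduces to $\io u^p v$ plus $\io u^p$, and $\io u^p v$ is absorbed into the diffusion dissipation by H\"older (using the $L^{r'}$-bound on $v$ coming from the mass bound) and Gagliardo--Nirenberg. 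No bound on $\na w$ or $\Delta w$ in any $L^r$ is ever established or needed; indeed the paper explicitly notes that the absence of a $\na w$-bound is why standard $L^\infty$-machinery fails and why the final Moser iteration again runs through the same pointwise $-\Delta w$ estimate --- so your claim that the iteration uses ``already-established bounds on $w$, $\na w$ and $\Delta w$'' is not tenable.

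Two further points. First, for the chemotaxis term the paper does not rely on semigroup smoothing alone but couples $\io u^p$ with $\io|\na v|^{2q}$, deriving a joint differential inequality whose dissipation terms $\io|\na u^{\frac{p+m-1}2}|^2$ and $\io|\na|\na v|^q|^2$ absorb both $\io u^{p-m+1}|\na v|^2$ and $\io u^2|\na v|^{2q-2}$; the admissibility of the exponents $(p,q)$ under $m>2-\frac2n$ is a separate (nontrivial) verification. Your sketch leaves this coupling implicit, and it is exactly where the threshold is decided. Second, upgrading $v$ to $W^{1,\infty}(\Om)$ requires $\norm[L^p(\Om)]{u}$ bounded for some $p>n$, not merely $p>\frac n2$ (the latter only yields $\norm[L^\infty(\Om)]{v}$).
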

In the case of possibly degenerate diffusion, that is without the assumption (\ref{D:nondeg}), $\systemref$ possesses at least one global bounded weak solution:
\begin{thm}\label{Theorem2}
Let $n\in\{2, 3, 4\}$ and $\Omega\subset\mathbb{R}^n$ be a bounded domain with smooth boundary and $\chi, \xi, \mu>0$.
Suppose that the initial data $(u_0, v_0, w_0)$ satisfy $(\ref{eq:initdatacond})$. Then for any $\delta>0$ and
\(
m>2-\frac{2}{n}
\)
for any function $D$ satisfying \eqref{D12}, system \systemref\ has a weak solution $(u,v,w)$ in the sense of Definition \ref{def:weaksol} below that exists globally in time and is bounded in the sense that \eqref{eq:defbdness} holds.
\end{thm}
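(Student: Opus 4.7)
\medskip

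\textbf{Proof proposal for Theorem~\ref{Theorem2}.} The natural strategy is to regularize the (possibly degenerate) diffusivity, invoke Theorem~\ref{Theorem1} to obtain a family of classical solutions with uniform bounds, and then pass to the limit. More precisely, for $\eps\in(0,1)$ set $\De(u):=D(u)+\eps$. Then $\De\in C^2([0,\infty))$, $\De(u)\ge \delta u^{m-1}$ for $u>0$, and $\De(0)=\eps>0$; hence \eqref{D12} and \eqref{D:nondeg} hold for $\De$ (with the same $\delta$ and $m$). Applying Theorem~\ref{Theorem1} yields, for each $\eps$, a global classical solution $(\ue,\ve,\we)$ of the approximate problem, and, crucially, the constant $C$ in \eqref{eq:defbdness} furnished by that theorem does not depend on $\eps$. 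Consequently we obtain the $\eps$-independent bound
\[
 \norm[L^\infty(\Om)]{\ue(\cdot,t)}+\norm[W^{1,\infty}(\Om)]{\ve(\cdot,t)}+\norm[L^\infty(\Om)]{\we(\cdot,t)}\leq C\qquad\text{for all }t>0.
\]

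The second step is to upgrade these $L^\infty$ bounds to compactness estimates strong enough to pass to the limit in the nonlinear terms. Testing \eqref{ueq} with $\ue$ and using $\De(\ue)\ge \delta \ue^{m-1}$ produces a space-time bound for $\na \ue^{m/2}$ in $L^2_{\mathrm{loc}}$; together with the uniform $L^\infty$-bound this gives $\ue\in L^2_{\mathrm{loc}}(0,\infty;W^{1,p})$ for a suitable $p$, and hence, via the equation, a uniform bound on $\partial_t \ue$ in some negative-order space. An Aubin--Lions argument then delivers strong convergence $\ue\to u$ in $L^2_{\mathrm{loc}}(\Ombar\times[0,\infty))$ along a subsequence. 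For $\ve$, the uniform bound together with standard parabolic regularity applied to \eqref{veq} yields higher Sobolev bounds and strong $L^2_{\mathrm{loc}}$ convergence of $\na \ve$, while $\we$ is controlled directly from the explicit ODE representation $\we(x,t)=w_0(x)\exp(-\int_0^t \ve(x,s)\,ds)$, which also provides strong convergence of $\we$ and $\na\we$.

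The third step is the limit passage in the weak formulation. Strong $L^2_{\mathrm{loc}}$ convergence of $\ue$, combined with the pointwise-a.e.\ convergence that comes out of the same subsequence, identifies $\ue\na\ve\to u\na v$ and $\ue\na\we\to u\na w$ as well as the reaction terms. For the diffusion flux, one writes $\De(\ue)\na\ue = \na\Phi_\eps(\ue)$ with $\Phi_\eps'=\De$; the uniform $L^2_{\mathrm{loc}}$ bound on $\na\ue^{m/2}$ transfers to a bound on $\na\Phi(\ue)$ (with $\Phi'=D$), and strong convergence of $\ue$ together with weak convergence of $\na\Phi_\eps(\ue)$ permits identification of the limit as $\na\Phi(u)$, which is precisely the quantity appearing in Definition~\ref{def:weaksol}. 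The boundedness assertion \eqref{eq:defbdness} for the limit then follows from lower semicontinuity (and pointwise a.e.\ convergence) applied to the uniform bound on $(\ue,\ve,\we)$.

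I expect the main obstacle to be the identification of the diffusion flux in the limit when $D$ may vanish: the test-function spaces must be chosen so that the bound on $\na\ue^{m/2}$, rather than on $\na\ue$ itself, suffices, and one needs to verify that the compactness delivered by Aubin--Lions is strong enough to pass to the limit in $\Phi_\eps(\ue)$ and its gradient. The rest—limit passage in the taxis terms and in the ODE for $w$—is routine once the strong convergence of $\ue$ and of $\na\ve$ is in hand, since the uniform $L^\infty$ bounds dominate all nonlinearities that arise.
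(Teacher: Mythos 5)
Your strategy coincides with the paper's: regularize $D$ so that Theorem \ref{Theorem1} applies with constants independent of $\eps$ (the paper uses $D_\eps(s)=D(s+\eps)$ rather than $D+\eps$, but either choice satisfies \eqref{D12} and \eqref{D:nondeg} with the same $\delta$ and $m$), integrate the differential inequality of Lemma \ref{l4} in time to obtain the space-time gradient bounds, get compactness of $u_\eps$ from a dual-space bound on the time derivative plus Aubin--Lions, and identify the limit of the diffusive flux through a primitive of $D$ --- exactly the route of Lemmata \ref{lemma4.1}--\ref{lem:subseq_u}. Two points in your write-up need adjustment. First, the claim that the space-time gradient estimate together with the $L^\infty$ bound yields $u_\eps\in L^2_{loc}(0,\infty;W^{1,p}(\Om))$ does not follow: the energy estimates control $\na u_\eps^{\alpha}$ only for exponents $\alpha=\frac{p+m-1}2>1$, and since $|\na u_\eps|=\frac1\alpha u_\eps^{1-\alpha}|\na u_\eps^{\alpha}|$ with $u_\eps^{1-\alpha}$ unbounded near the degeneracy set $\{u_\eps=0\}$, no bound on $\na u_\eps$ itself results. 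The paper circumvents this exactly as you anticipate in your closing paragraph: Aubin--Lions is applied to $u_\eps^\theta$ with $\theta\geq\frac{p+m-1}2$ (Lemmata \ref{Lemma4.2} and \ref{lem:subseq_u}), which gives a.e.\ convergence of $u_\eps$ and then strong $L^2$ convergence by dominated convergence. Second, strong convergence of $\na w_\eps$ is more than you need and more than the ODE representation cheaply delivers (it would require strong, pointwise-in-time convergence of $\na v_\eps$); weak convergence of $\na w_\eps$ in $L^2_{loc}$, available from the uniform bound on $\intnT\io|\na w_\eps|^2$, suffices because in the weak formulation $\na w_\eps$ is paired with the strongly convergent $u_\eps$.
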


\begin{rem}
In the case $n=2$, condition \eqref{m} reads $m>1$ and coincides with the condition posed in order to obtain the global existence result in  \cite{tao2011chemotaxis}. For $n=3$, \eqref{m} is stronger than the condition $m>\frac{26}{21}$ in \cite{tao2011chemotaxis} and we leave open the question here whether the solutions to \systemref\ for $\frac{26}{21}<m<\frac{4}{3}$ are bounded.
\end{rem}

\begin{rem}
 The chemotaxis-haptotaxis system therefore has bounded solutions under the same condition on $m$ as the pure chemotaxis system with $w\equiv 0$ without logistic source \cite{Tao-Winkler-jde}. For $\mu=0$ this condition is essentially optimal, cf. \cite{Winkler_volumefilling}.
\end{rem}

The rest of this paper is organized as follows. In the following section, we will prepare the later proofs by collecting some useful estimates. Section \ref{s3} is devoted to deriving a differential inequality for the quantity $\io u^p+\io |\na v|^{2q}$, $p,q>1$, whose boundedness is asserted in Lemma \ref{l10} and which can be used to prove the boundedness of solutions and thus Theorem  \ref{Theorem1}. Relying on the existence of classical solutions in the non-degenerate case, in Section \ref{s4} we will then complete the proof of Theorem \ref{Theorem2} by an approximation procedure.

\section{Preliminaries}\label{s2}
In this section we collect a few short, but helpful results, most of which have already been proven elsewhere.
In some cases, for the convenience of the reader, we take the liberty of stating the conditions differently from the original source, in such a way that the lemmata are still covered by the original proof, but become more easily applicable in the present situation.
For example, Lemma \ref{lem:bdDeltaw} below was proven in \cite{tao2014boundedness} for a system of PDEs differing from \systemref\ by the first equation.
The only property of $u$ required in the proof, however, is nonnegativity, so that this difference plays no role, and we state the lemma accordingly.

Let us begin with an estimate for a particular boundary integral that enables us to cover possibly non-convex domains.

\begin{lem}\label{lem:bdryestimate}
 Let $\Om\subset\R^n$ be a bounded domain with smooth boundary, let $q\in [1,\infty)$ and $M>0$. Then for any $\eta>0$ there is $C_\eta>0$ such that for any $v\in C^{2}(\Ombar)$ satisfying $\frac{\partial v}{\partial \nu} = 0$ on $\partial\Om$ and $\io |\na v|\leq M$ the inequality
\[
 \intdom |\na v|^{2q-2} \frac{\partial |\na v|^2}{\partial \nu} \leq \eta \io |\na |\na v|^q|^2+ C_\eta
\]
holds.
\end{lem}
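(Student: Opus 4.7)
My plan is to combine a geometric reduction of the boundary normal derivative with two successive interpolation steps---a trace inequality and a Gagliardo--Nirenberg estimate---that absorb everything into $\eta\io |\na |\na v|^q|^2$ while exploiting the hypothesis $\io |\na v|\leq M$. The first step is purely geometric: since $\frac{\partial v}{\partial \nu}=0$ on $\partial\Om$ and $\partial\Om$ is smooth, a standard computation involving the second fundamental form yields $\frac{\partial |\na v|^2}{\partial \nu}\leq 2\kappa_\Om |\na v|^2$ on $\partial\Om$ with a constant $\kappa_\Om>0$ depending only on the maximal principal curvature of $\partial\Om$. Hence the left-hand side of the asserted inequality is bounded by $2\kappa_\Om\intdom |\na v|^{2q}$ and it suffices to control this surface integral.

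Next, setting $\phi:=|\na v|^q$ and combining the trace embedding $W^{s,2}(\Om)\hookrightarrow L^2(\partial\Om)$ for some fixed $s\in(\tfrac12,1)$ with the interpolation $\norm[W^{s,2}(\Om)]{\phi}\leq C\norm[H^1(\Om)]{\phi}^{s}\norm[L^2(\Om)]{\phi}^{1-s}$ and Young's inequality, I would obtain, for every $\eta_1>0$,
\[
\intdom |\na v|^{2q}=\norm[L^2(\partial\Om)]{\phi}^{2}\leq \eta_1\io |\na\phi|^2+C_1(\eta_1)\io \phi^2.
\]
The remaining term $\io\phi^2=\io |\na v|^{2q}$ would then be handled by the Gagliardo--Nirenberg inequality with the $L^{1/q}$ quasi-norm as the lower-order factor; since $\norm[L^{1/q}(\Om)]{\phi}=(\io |\na v|)^{q}\leq M^q$, this yields
\[
\io |\na v|^{2q}\leq C_2\bigl(\io|\na|\na v|^q|^2\bigr)^{\alpha}M^{2q(1-\alpha)}+C_3 M^{2q}
\]
with the scaling exponent $\alpha=\frac{q-1/2}{q+1/n-1/2}$, which lies in $(0,1)$ for every $q\geq 1$ and $n\geq 2$. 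A final Young's inequality converts this into $\io |\na v|^{2q}\leq \eta_2\io|\na|\na v|^q|^2+C_4(\eta_2,M)$ for any $\eta_2>0$.

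Plugging this into the preceding display and choosing $\eta_1,\eta_2$ so that the total coefficient in front of $\io|\na|\na v|^q|^2$ equals $\eta/(2\kappa_\Om)$ completes the proof. The main obstacle is the use of the Gagliardo--Nirenberg inequality with an $L^p$ quasi-norm for $p=1/q<1$ and, above all, the verification that the resulting interpolation exponent $\alpha$ remains strictly below $1$ throughout the relevant range of $q$ and $n$; this is precisely what makes Young's inequality in the third step produce an absorbable first term rather than an inflated one, and it is also the place where the hypothesis $\io |\na v|\leq M$ enters the argument in an essential way.
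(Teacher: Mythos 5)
Your proposal is correct and follows essentially the same route as the paper, which cites the proof of Proposition 3.2 in Ishida--Seki--Yokota: a curvature bound $\frac{\partial|\na v|^2}{\partial\nu}\leq c\,|\na v|^2$ on $\partial\Om$ (valid since $\frac{\partial v}{\partial\nu}=0$), a trace/fractional Sobolev embedding with interpolation to absorb the resulting surface integral of $|\na v|^{2q}$, and a Gagliardo--Nirenberg step with the $L^{1/q}$ quasi-norm of $|\na v|^q$ (controlled by $M^q$) followed by Young's inequality. Your exponent $\alpha=\frac{q-1/2}{q+1/n-1/2}<1$ matches what the paper's generalized Gagliardo--Nirenberg lemma gives, so the absorption works as you describe.
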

\begin{proof}
 This has been proven in the course of the proof of \cite[Prop. 3.2]{ishida2014boundedness} by a combination of an estimate of $\frac{\partial |\na v|^2}{\partial\nu}$ on the boundary in terms of the curvature of $\partial\Om$ and $|\na v|^2$ with the embedding $W^{r+\frac12,2}(\Om)\hookrightarrow W^{r,2}(\partial \Om)\hookrightarrow L^2(\partial \Om)$, a fractional Gagliardo-Nirenberg inequality and Young's inequality (cf. Lemmata 2.2 -- 2.5 and estimate (3.10) of \cite{ishida2014boundedness}).
\end{proof}

We will make use of the Poincar\'e inequality in the following form:
\begin{lem}\label{lem:poincare}
 Let $\Om\subset\R^n$ be a bounded smooth domain. Let $\alpha>0$. Then there exists $C>0$ such that
\[
 \norm[W^{1,2}(\Om)]{u} \leq C \left( \norm[L^2(\Om)]{\na u} + \left(\io |u|^\alpha \right)^{\frac1\alpha}\right) \qquad \mbox{for all } u\in W^{1,2}(\Om).
\]
\end{lem}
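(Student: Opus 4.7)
The plan is to prove this by a standard compactness-contradiction argument, since the statement is just the usual generalized Poincar\'e inequality with the mean replaced by an $L^\alpha$ functional. Suppose for contradiction that the conclusion fails. Then for every $k\in\N$ one finds $u_k\in W^{1,2}(\Om)$ with
\[
 \norm[W^{1,2}(\Om)]{u_k} > k\left(\norm[L^2(\Om)]{\na u_k}+\left(\io |u_k|^\alpha\right)^{\frac1\alpha}\right).
\]
Dividing each $u_k$ by $\norm[W^{1,2}(\Om)]{u_k}$ (which preserves the strict inequality after adjusting $C$), I may assume $\norm[W^{1,2}(\Om)]{u_k}=1$ and accordingly $\norm[L^2(\Om)]{\na u_k}\to 0$ as well as $\io |u_k|^\alpha \to 0$ as $k\to\infty$.

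Since $(u_k)$ is bounded in $W^{1,2}(\Om)$, the Rellich--Kondrachov theorem provides a subsequence (not relabeled) converging to some limit $u$ strongly in $L^2(\Om)$ and, passing to a further subsequence, pointwise almost everywhere on $\Om$. Because $\na u_k\to 0$ in $L^2(\Om)$, the distributional gradient of $u$ vanishes, so $u$ is constant on each connected component of $\Om$. Applying Fatou's lemma to the nonnegative sequence $|u_k|^\alpha$, I obtain
\[
 \io |u|^\alpha \le \liminf_{k\to\infty} \io |u_k|^\alpha = 0,
\]
so that $u\equiv 0$.

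Consequently $u_k\to 0$ in $L^2(\Om)$, and combined with $\na u_k\to 0$ in $L^2(\Om)$ this yields $\norm[W^{1,2}(\Om)]{u_k}\to 0$, contradicting the normalization $\norm[W^{1,2}(\Om)]{u_k}=1$. The only point that requires minor care is the case $\alpha<1$, where $\left(\io|\cdot|^\alpha\right)^{1/\alpha}$ is not a norm; but the Fatou step depends only on nonnegativity of $|u_k|^\alpha$ and therefore is unaffected. No step is really a serious obstacle here, the argument being by now standard; the main thing to watch is simply that one does not need $\alpha\ge 1$ anywhere.
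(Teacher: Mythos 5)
Your proof is correct and follows essentially the same compactness--contradiction route as the paper: normalize a hypothetical counterexample sequence, apply Rellich--Kondrachov, and use Fatou's lemma on $|u_k|^\alpha$ (valid for all $\alpha>0$) to force the limit to vanish, contradicting the normalization. The only cosmetic difference is that you normalize by the $W^{1,2}$-norm while the paper normalizes by the $L^2$-norm (implicitly reducing the claim to an $L^2$-bound first); both variants are sound.
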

\begin{proof}
 Assuming to the contrary that there exists a sequence $(v_n)_{n\in\N}\subset W^{1,2}(\Om)$ such that $\norm[L^2(\Om)]{v_n}=1$ and $\norm[L^2(\Om)]{v_n}\geq n(\norm[L^2(\Om)]{\na v_n} + (\io |v_n|^\alpha)^{\frac1\alpha})$ for any $n\in\mathbb{N}$, we can find a subsequence thereof converging to some $v\in W^{1,2}(\Om)$ weakly in $W^{1,2}(\Om)$ and strongly in $L^2(\Om)$ so that $\norm[L^2(\Om)]{v}=1$. Due to $\io |v|^\alpha \leq \liminf_{n\to \infty} \io |v_n|^\alpha \leq \liminf_{n\to\infty} (\frac1n)^\alpha=0$, at the same time we obtain $v=0$, a contradiction.
\end{proof}

Also the Gagliardo-Nirenberg inequality will be used in a less common version (for a similar variant see \cite[Lemma 3.2]{wk_criticalexponent}):
\begin{lem}
 Let $\Om\subset \R^n$ be a bounded smooth domain. Let $r\geq 1$, $0<q\leq p\leq \infty$, $s>0$ be such that
\begin{equation}\label{eq:GNcondition}
 \frac1r\leq \frac1n+\frac1p.
\end{equation}
 Then there exists $c>0$ such that
 \[
  \norm[L^p(\Om)]{u}\leq c\left(\norm[L^r(\Om)]{\na u}^a \norm[L^q(\Om)]{u}^{1-a} + \norm[L^s(\Om)]{u}\right) \quad \mbox{ for all } u\in W^{1,r}(\Om)\cap L^q(\Om),
 \]
 where \begin{equation}\label{eq:GNdefa}
        a=\frac{\frac1q-\frac1p}{\frac1q+\frac1n-\frac1r}.
       \end{equation}
\end{lem}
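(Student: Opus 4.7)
The plan is to reduce the statement to the classical Gagliardo--Nirenberg inequality by dealing with the $\norm[L^s]{u}$-term separately. Under the assumption \eqref{eq:GNcondition}, the exponent $a$ defined in \eqref{eq:GNdefa} lies in $[0,1]$ (one checks that $a\ge 0$ follows from $q\le p$ and \eqref{eq:GNcondition}, and $a\le 1$ is equivalent to \eqref{eq:GNcondition} itself), so the standard Gagliardo--Nirenberg inequality on the bounded domain $\Om$ supplies a constant $C_1>0$ with
\[
 \norm[L^p(\Om)]{u}\le C_1\bigl(\norm[L^r(\Om)]{\na u}^a\norm[L^q(\Om)]{u}^{1-a}+\norm[L^q(\Om)]{u}\bigr),\qquad u\in W^{1,r}(\Om)\cap L^q(\Om). \tag{$*$}
\]
This also already ensures $\norm[L^p(\Om)]{u}<\infty$, which will be used when absorbing below. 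The remaining task is thus to replace the additive $\norm[L^q(\Om)]{u}$-term by a multiple of $\norm[L^s(\Om)]{u}$.

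I would then distinguish two cases. If $s\ge q$, H\"older's inequality gives $\norm[L^q(\Om)]{u}\le |\Om|^{\frac1q-\frac1s}\norm[L^s(\Om)]{u}$, and the result follows at once by substituting into $(*)$ and enlarging the constant. If $s<q$, the desired inequality is trivial when $q=p$ (since then $a=0$ and $(*)$ already reduces to the claimed bound with the first term alone), so I may assume $s<q<p$. Then H\"older's interpolation between $L^s$ and $L^p$ gives
\[
 \norm[L^q(\Om)]{u}\le \norm[L^p(\Om)]{u}^\theta \norm[L^s(\Om)]{u}^{1-\theta},
\]
where $\theta\in(0,1)$ is determined by $\tfrac{1}{q}=\tfrac{\theta}{p}+\tfrac{1-\theta}{s}$ (with the convention $\theta/p=0$ in case $p=\infty$).

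Substituting this into $(*)$ and applying Young's inequality with conjugate exponents $\tfrac1\theta$ and $\tfrac1{1-\theta}$ to the term $C_1\norm[L^p(\Om)]{u}^\theta\norm[L^s(\Om)]{u}^{1-\theta}$, I would bound it by $\tfrac12\norm[L^p(\Om)]{u}+C_2\norm[L^s(\Om)]{u}$ for some $C_2>0$, and then absorb $\tfrac12\norm[L^p(\Om)]{u}$ on the left-hand side, yielding the claim with $c:=\max\{2C_1,2C_2\}$. The main (minor) obstacle is the careful bookkeeping across the cases $s\ge q$, $s<q=p$, $s<q<p<\infty$ and $p=\infty$, and making sure one does not absorb into an a priori infinite quantity; the latter is guaranteed by $(*)$ itself applied to $u\in W^{1,r}(\Om)\cap L^q(\Om)$.
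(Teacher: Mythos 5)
Your handling of the additive $\norm[L^s(\Om)]{u}$-term --- H\"older when $s\ge q$, interpolation between $L^s$ and $L^p$ followed by Young's inequality and absorption when $s<q$ --- is correct and is essentially what the paper does at the end of its own proof. The genuine gap sits at your very first step: you invoke the \emph{standard} Gagliardo--Nirenberg inequality to obtain $(*)$ with the factor $\norm[L^q(\Om)]{u}^{1-a}$. But the lemma allows any $q>0$, in particular $q\in(0,1)$, and this is precisely the case the classical statements (Nirenberg, Friedman) do not cover, since they are formulated for Lebesgue exponents $\ge 1$; it is also precisely the case needed later in the paper, where the inequality is applied with $q=\frac{2}{p+m-1}<1$. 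The entire content of this lemma beyond the classical one is the extension to $p,q\in(0,1)$, and your proposal assumes that extension rather than proving it.

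To close the gap you would have to argue as the paper does. For $q\le 1\le p$, H\"older's inequality gives $\norm[L^1(\Om)]{u}\le\norm[L^p(\Om)]{u}^{c}\norm[L^q(\Om)]{u}^{1-c}$ with $c=\frac{\frac1q-1}{\frac1q-\frac1p}\in[0,1]$; inserting this into the classical estimate $\norm[L^p(\Om)]{u}\le C\norm[W^{1,r}(\Om)]{u}^{b}\norm[L^1(\Om)]{u}^{1-b}$ and absorbing the resulting factor $\norm[L^p(\Om)]{u}^{c(1-b)}$ into the left-hand side yields the multiplicative inequality with exponent exactly the $a$ of \eqref{eq:GNdefa}; the case $q\le p\le 1$ is handled analogously starting from $\norm[L^p(\Om)]{u}\le\norm[L^1(\Om)]{u}^{c}\norm[L^q(\Om)]{u}^{1-c}$. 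Only then does one replace $\norm[W^{1,r}(\Om)]{u}$ by $\norm[L^r(\Om)]{\na u}$ plus a lower-order term via the Poincar\'e inequality (Lemma \ref{lem:poincare}) and treat the resulting cross term as you do. Note also that this route produces the cross term $\norm[L^q(\Om)]{u}^{1-a}\norm[L^s(\Om)]{u}^{a}$ rather than $\norm[L^q(\Om)]{u}$ itself, so the bookkeeping in your second stage changes slightly, though the same H\"older--Young--absorption mechanism still works.
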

\begin{proof}
 Compared to the standard version of the Gagliardo-Nirenberg inequality (\cite[p.126]{Nirenberg_ellipticPDE}, \cite[Thm. 10.1]{FriedmanPDE}), we include the possibility of $p,q\in(0,1)$, but restrict ourselves to the case of $q\leq p$. The conditions $q\leq p$ and $\frac1r\leq \frac1n+\frac1p$ ensure that $a\in[0,1]$. 
 For a proof of the case $1\leq p\leq q$, we refer to the aforementioned theorems of \cite{Nirenberg_ellipticPDE,FriedmanPDE}.
 For $q\leq1\leq p$, let $c:=\frac{\frac1q-1}{\frac1q-\frac1p}$. Then $c\in[0,1]$ 
 and $\frac cp+\frac{1-c}q=1$. H\"older's inequality with exponents $\frac cp$ and $\frac{1-c}q$ yields
$\norm[L^1(\Om)]{u}\leq \norm[L^p(\Om)]{u}^c\norm[L^q(\Om)]{u}^{1-c}$ and we conclude from the usual version of the Gagliardo-Nirenberg inequality that
\[
 \norm[L^p(\Om)]{u}\leq C_1 \norm[W^{1,r}(\Om)]{u}^b\norm[L^1(\Om)]{u}^{1-b} \leq C_1 \norm[W^{1,r}(\Om)]{u}^b \norm[L^p(\Om)]{u}^{c(1-b)} \norm[L^q(\Om)]{u}^{(1-c)(1-b)}
\]
for
\(
 b=\frac{1-\frac1p}{1+\frac1n-\frac1r},
\) and with some $C_1>0$
so that
\[
 \norm[L^p(\Om)]{u}\leq C_2
 \norm[W^{1,r}(\Om)]{u}^{\frac{b}{1-c(1-b)}}\norm[L^q(\Om)]{u}^{\frac{(1-c)(1-b)}{1-c(1-b)}}= C_2 \norm[W^{1,r}(\Om)]{u}^a\norm[L^q(\Om)]{u}^{1-a},
\]
where $C_2=C_1^{\frac1{1-c(1-b)}}$ and
\[
 a=\frac{b}{1-c+cb}=\frac{1-\frac1p}{1+\frac1n-\frac1r}\cdot\frac{1}{\frac{1-\frac1p}{\frac1q-\frac1p}+\frac{\frac1q-1}{\frac1q-\frac1p}\cdot\frac{1-\frac1p}{1+\frac1n-\frac1r}}=\frac{\frac1q-\frac1p}{1+\frac1n-\frac1r+\frac1q-1}
\]
coincides with the expression for $a$ given in \eqref{eq:GNdefa}.
For $q\leq p\leq 1$ let $c:=\frac{\frac1q-\frac1p}{\frac1q-1} \in [0,1]$. Then $cp+\frac{(1-c)p}q=1$ and by H\"older's inequality with exponents $cp$ and $\frac{(1-c)p}q$,
we have
\[\norm[L^p(\Om)]{u}\leq \norm[L^1(\Om)]{u}^c\norm[L^q(\Om)]{u}^{1-c}\leq C_3 \norm[W^{1,r}(\Om)]{u}^{cb} \norm[L^q(\Om)]{u}^{c(1-b)+1-c} = C_3 \norm[W^{1,r}(\Om)]{u}^a\norm[L^q(\Om)]{u}^{1-a}
\]
 with $C_3$ from the usual Gagliardo-Nirenberg inequality, where $b=\frac{\frac1q-1}{\frac1q+\frac1n-\frac1r}$ and thus $a:=cb=\frac{\frac1q-\frac1p}{\frac1q+\frac1n-\frac1r}$.
 In both cases, in $\norm[L^p(\Om)]{u}\leq \max\set{C_2,C_3} \norm[W^{1,r}(\Om)]{u}^a \norm[L^q(\Om)]{u}^{1-a}$ we may employ the Poincar\'e inequality (Lemma \ref{lem:poincare}) to obtain $C_P>0$, so that with $C_4=\max\set{C_2,C_3}\cdot C_P$ we achieve $\norm[L^p(\Om)]{u}\leq C_4 \norm[L^r(\Om)]{\na u}^a \norm[L^q(\Om)]{u}^{1-a} + C_4\norm[L^q(\Om)]{u}^{1-a}\norm[L^s(\Om)]{u}^a$, where $\norm[L^q(\Om)]{u}^{1-a}\norm[L^s(\Om)]{u}^a \leq C_5 \norm[L^s(\Om)]{u}$ if $s\geq q$ with some $C_5$ obtained from H\"older's inequality. If, on the other hand $s\leq q$, then $\norm[L^q(\Om)]{u}^{1-a} \norm[L^s(\Om)]{u}^a \leq C_6 \norm[L^q(\Om)]{u} \leq C_6 \norm[L^p(\Om)]{u}^d\norm[L^s(\Om)]{u}^{1-d} \leq \frac12 \norm[L^p(\Om)]{u} + C_7 \norm[L^s(\Om)]{u}$ with $C_6>0$ and $C_7>0$ from  H\"older's and Young's inequality, respectively, and $d=(\frac1s-\frac1q)/(\frac1s-\frac1p)$.
\end{proof}

When we have to estimate powers of norms, we will often without notice combine the Gagliardo-Nirenberg inequality with the following elementary estimate.
\begin{lem}
 For every $\alpha>0$ there is $C>0$ such that for all $x,y\in(0,\infty)$ we have
 \begin{equation*}
 (x+y)^\alpha\leq C(x^\alpha+y^\alpha).
 \end{equation*}
\end{lem}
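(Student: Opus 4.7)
The plan is to split the argument into two cases depending on the size of $\alpha$, relying on convexity versus concavity of $t\mapsto t^\alpha$ on $[0,\infty)$.

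For $\alpha\ge 1$, I would use that $t\mapsto t^\alpha$ is convex, so by Jensen's inequality applied to the average of $x$ and $y$ one has $\left(\frac{x+y}{2}\right)^\alpha\le \frac{x^\alpha+y^\alpha}{2}$, which upon multiplication by $2^\alpha$ yields $(x+y)^\alpha\le 2^{\alpha-1}(x^\alpha+y^\alpha)$. For $0<\alpha<1$, the map $t\mapsto t^\alpha$ is concave on $[0,\infty)$ with value $0$ at $0$, hence subadditive, i.e.\ $(x+y)^\alpha\le x^\alpha+y^\alpha$; alternatively, WLOG assuming $x\le y$, one writes $(x+y)^\alpha=y^\alpha(1+x/y)^\alpha\le y^\alpha(1+(x/y)^\alpha)=y^\alpha+x^\alpha$ using $(1+s)^\alpha\le 1+s^\alpha$ for $s\in[0,1]$, $\alpha\in(0,1]$ (which itself follows from comparing derivatives at $s=0$ together with concavity). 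Setting $C:=\max\{1,2^{\alpha-1}\}$ then covers both cases simultaneously, and since both sides are continuous in $(x,y)\in[0,\infty)^2$ there is no issue at the boundary. There is no real obstacle here; the only decision is whether to present the two cases separately or to invoke the uniform bound $(x+y)^\alpha\le 2^\alpha\max\{x,y\}^\alpha\le 2^\alpha(x^\alpha+y^\alpha)$, which works uniformly in $\alpha>0$ with $C=2^\alpha$ and makes the case distinction unnecessary.
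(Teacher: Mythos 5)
Your proposal is correct. Your primary route (convexity via Jensen for $\alpha\ge 1$, giving the sharp constant $2^{\alpha-1}$, and subadditivity of concave functions vanishing at $0$ for $\alpha\in(0,1)$, giving $C=1$) differs from the paper's proof, which is simply the one-line chain $(x+y)^\alpha\le(2\max\{x,y\})^\alpha\le 2^\alpha\max\{x^\alpha,y^\alpha\}\le 2^\alpha(x^\alpha+y^\alpha)$ — exactly the alternative you mention at the end. Your case-split buys the optimal constants in each regime, which is irrelevant here since the lemma only asks for \emph{some} $C$; the paper's argument buys brevity and avoids any case distinction, at the cost of the non-optimal constant $2^\alpha$. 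Either version is perfectly adequate for the way the lemma is used in the paper.
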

\begin{proof}
 \(
  (x+y)^\alpha \leq (2\max\set{x,y})^\alpha \leq 2^\alpha \max\set{x^\alpha,y^\alpha} \leq 2^\alpha (x^\alpha+y^\alpha).
 \)
\end{proof}

Next let us give a basic property of the total mass of cancer cells that can be checked easily.

\begin{lem}\label{lem:massbd}
Let $T>0$, let $u\in C^{2,1}(\Ombar\times(0,T))\cap C^0(\Ombar\times[0,T))$ solve \eqref{ueq} with some functions $v,w\in C^{2,1}(\Ombar\times(0,T)\cap C^0(\Ombar\times[0,T))$, $D\in C^1(\R)$ and $\chi,\xi,\mu>0$. If $u$ and $w$ are nonnegative, $u$ satisfies
\begin{equation}\label{mass}
\io u(x, t)dx\leq m^{*}:=\max\set{|\Omega|, \io u(x,0)dx}
\end{equation}

for all $t\in(0, T)$.
\end{lem}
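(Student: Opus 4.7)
The plan is to derive an ODI (ordinary differential inequality) for $y(t) := \io u(x,t)\,dx$ by testing equation \eqref{ueq} with the constant function $1$, and then to compare with the logistic ODE.

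First I would integrate \eqref{ueq} over $\Omega$. The no-flux boundary condition \eqref{rb} (in the equivalent form stated just below \eqref{ic}) ensures that
\[
 \intdom \left(D(u)\frac{\partial u}{\partial\nu} - \chi u \frac{\partial v}{\partial\nu} - \xi u \frac{\partial w}{\partial\nu}\right) = 0,
\]
so by the divergence theorem the diffusion, chemotaxis and haptotaxis terms contribute nothing. This leaves
\[
 \ddt \io u = \mu \io u(1-u-w).
\]
Using $w\geq 0$ (which is preserved by $w_t=-vw$ given $w_0>0$) together with $u\geq 0$, we obtain $u(1-u-w)\leq u - u^2$. The Cauchy--Schwarz inequality yields $\io u^2 \geq |\Om|^{-1}(\io u)^2$, so setting $y(t):=\io u(\cdot,t)$ we arrive at the logistic differential inequality
\[
 y'(t) \leq \mu\, y(t)\left(1 - \frac{y(t)}{|\Om|}\right), \qquad t\in(0,T).
\]

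From here an elementary comparison argument finishes the proof: if $y(0)\leq |\Om|$, then $y\leq |\Om|$ for all $t\in(0,T)$, because at any time where $y$ would attempt to cross the level $|\Om|$ the right-hand side becomes nonpositive; and if $y(0)>|\Om|$, then $y'\leq 0$ as long as $y\geq |\Om|$, so $y$ is non-increasing until (if ever) it drops to $|\Om|$, at which point the previous case applies. In either situation $y(t)\leq\max\set{|\Om|,\io u_0}=m^*$, which is exactly \eqref{mass}. There is no real obstacle here—the only points to be careful about are the vanishing of the boundary integral (which is immediate from \eqref{rb}) and the nonnegativity of $w$, which is standard from the ODE structure of \eqref{weq}.
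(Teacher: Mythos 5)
Your proposal is correct and follows essentially the same route as the paper: integrate \eqref{ueq} in space, drop the boundary terms via the no-flux condition, discard $-\mu\io uw$ using the nonnegativity of $u$ and $w$ (which, note, is already a hypothesis of the lemma and need not be re-derived), apply Cauchy--Schwarz to get the logistic differential inequality, and conclude by ODE comparison. The only difference is that you spell out the comparison argument, which the paper leaves implicit.
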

\begin{proof}
Thanks to the homogeneous boundary condition, we directly integrate \eqref{ueq} with respect to space.  Using the nonnegativity of $u$ and $w$ and the Cauchy-Schwarz inequality, we obtain
\begin{eqnarray*}
\frac{d}{dt}\io u=\mu\io -\mu\io u^2-\mu\io uw\leq\mu\io u-\mu\io u^2
\leq\mu\io u-\frac{\mu}{|\Omega|}\left(\io u\right)^2
\end{eqnarray*}
on $(0, T)$.  With an ODE comparison argument, this leads to (\ref{mass}).
\end{proof}

The following properties, turning information on the first solution component into boundedness assertions about the second, can be derived by invoking the variation-of-constants formula for $v$ and $L^p-L^q$ estimates for the heat semigroup.

\begin{lem}\label{lplqestimates}
Let $T\in(0,\infty]$, let $v\in C^{2,1}(\Ombar\times(0,T))\cap C^0(\Ombar\times[0,T))$ solve \eqref{veq} with some function $0\leq u\in C^0(\Ombar\times[0,T))$ and let $M>0$. \\
(i) Assume that $\norm[L^1(\Om)]{u(\cdot,t)}\leq M$ for all $t\in (0,T)$. Then for any $s\in [1, \frac{n}{n-2})$, there exists $c=c(M,s)>0$ such that
\begin{eqnarray*}
||v(\cdot, t)||_{L^s(\Omega)}\leq c \ for\ all\ t\in(0, T).
\end{eqnarray*}
(ii) Assume that $\norm[L^1(\Om)]{u(\cdot,t)}\leq M$ for all $t\in (0,T)$. Then for any $s\in[1, \frac{n}{n-1})$, there exists $c=c(M,s)>0$ such that
\begin{eqnarray*}
||\nabla v(\cdot, t)||_{L^{s}(\Omega)}\leq c \ for\ all\ t\in(0, T).
\end{eqnarray*}
(iii) Assume that $p>\frac{n}2$ and $\norm[L^p(\Om)]{u(\cdot,t)}\leq M$ for all $t\in (0,T)$. Then there is $c=c(p,M)>0$ such that
\[
 \norm[L^\infty(\Om)]{v(\cdot,t)}\leq c \mbox{ for all } t\in(0,T).
\]
(iv) Assume that $p>n$ and $\norm[L^p(\Om)]{u(\cdot,t)}\leq M$ for all $t\in (0,T)$. Then there is $c=c(p,M)>0$ such that
\[
 \norm[L^\infty(\Om)]{\nabla v(\cdot,t)}\leq c \mbox{ for all } t\in(0,T).
\]
\end{lem}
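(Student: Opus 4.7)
The plan is to represent $v$ via Duhamel's formula for the inhomogeneous heat equation with Neumann boundary, namely
\[
 v(\cdot,t) = e^{t(\Delta-1)}v_0 + \int_0^t e^{(t-s)(\Delta-1)} u(\cdot,s)\,ds,
\]
where $(e^{t(\Delta-1)})_{t\geq 0}$ denotes the Neumann heat semigroup twisted by the factor $e^{-t}$, and then to invoke the well-known (smoothing) $L^p$--$L^q$ estimates for the Neumann heat semigroup on $\Om$ (as collected e.g.\ in \cite{Winkler-10a}):
\[
 \norm[L^q(\Om)]{e^{t\Delta}f}\leq C(1+t^{-\tfrac{n}{2}(\tfrac{1}{r}-\tfrac{1}{q})})\norm[L^r(\Om)]{f},\qquad \norm[L^q(\Om)]{\nabla e^{t\Delta}f}\leq C(1+t^{-\tfrac{1}{2}-\tfrac{n}{2}(\tfrac{1}{r}-\tfrac{1}{q})})\norm[L^r(\Om)]{f},
\]
valid for $1\leq r\leq q\leq\infty$. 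In each of (i)--(iv) the first term $e^{t(\Delta-1)}v_0$ will be controlled by $e^{-t}\norm[L^\infty(\Om)]{v_0}$ (respectively $e^{-t}\norm[W^{1,\infty}(\Om)]{v_0}$ for the gradient), which is uniformly bounded since $v_0\in W^{1,\infty}(\Om)$.

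For the Duhamel integral one would bound
\[
 \left\|\int_0^t e^{(t-s)(\Delta-1)}u(\cdot,s)\,ds\right\|_{L^q(\Om)} \leq C\int_0^t e^{-(t-s)}(1+(t-s)^{-\sigma})\norm[L^r(\Om)]{u(\cdot,s)}\,ds,
\]
with suitable $\sigma$. For (i) one takes $r=1$ and $\sigma=\tfrac{n}{2}(1-\tfrac{1}{s})$; the requirement $\sigma<1$ is equivalent to $s<\tfrac{n}{n-2}$, which makes $\int_0^\infty e^{-\tau}(1+\tau^{-\sigma})\,d\tau$ finite. For (ii) one uses the gradient estimate, $r=1$, $\sigma=\tfrac{1}{2}+\tfrac{n}{2}(1-\tfrac{1}{s})$, whose finiteness requires exactly $s<\tfrac{n}{n-1}$. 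Part (iii) proceeds with $r=p$, $q=\infty$, $\sigma=\tfrac{n}{2p}$, convergent precisely when $p>\tfrac{n}{2}$; part (iv) with the gradient estimate, $r=p$, $q=\infty$, $\sigma=\tfrac{1}{2}+\tfrac{n}{2p}$, which demands $p>n$.

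The main point is in each case to check that the temporal singularity $\tau^{-\sigma}$ at $\tau=0$ is integrable against $e^{-\tau}$, which is exactly the content of the stated restrictions on $s$ and $p$; the exponential decay $e^{-(t-s)}$ stemming from the $-v$ term in \eqref{veq} takes care of the large-time behaviour and guarantees that the bound obtained is uniform in $t\in(0,T)$ and in $T$. There is no real obstacle here: the argument is entirely standard and essentially a bookkeeping of exponents; close relatives of all four assertions appear for instance in \cite{Winkler-10a}, \cite{horstmann_I} and subsequent chemotaxis literature, and one only has to record the constants explicitly in terms of $M$ (and $s$ or $p$).
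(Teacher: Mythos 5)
Your proposal is correct and follows essentially the same route as the paper: variation of constants for $v$ combined with the $L^p$--$L^q$ smoothing estimates for the Neumann heat semigroup from \cite{Winkler-10}, with the admissible ranges of $s$ and $p$ coming precisely from integrability of the temporal singularity. The only (harmless) difference is that the paper splits $u$ into its spatial mean and a mean-free part in (i) and (iii) so as to use the version of the smoothing estimate carrying the factor $e^{-\lambda_1 t}$, whereas you obtain the required large-time decay directly from the factor $e^{-(t-s)}$ in $e^{(t-s)(\Delta-1)}$, which works just as well.
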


\begin{proof}
(i) Let $s\in[1,\frac n{n-2})$. We denote by $e^{t\Delta}$ the (Neumann-) heat semigroup and use its positivity to estimate
\begin{align*}
 \norm[L^s(\Om)]{v(\cdot,t)}\leq& \norm[L^s(\Om)]{e^{t(\Delta-1)} v_0} + \intnt \norm[L^s(\Om)]{e^{(t-\tau)(\Delta-1)}(u(\cdot,\tau)-\ubar(\cdot,\tau)+\ubar(\cdot,\tau))}d\tau\\
 \leq& \norm[L^s(\Om)]{\norm[L^\infty(\Om)]{v_0}} +\intnt e^{-(t-\tau)} \norm[L^s(\Om)]{\ubar(\cdot,\tau)} d\tau\\
 +&c_1\intnt(1+(t-\tau)^{-\frac n2(1-\frac1s)}) \norm[L^1(\Om)]{u(\cdot,\tau)-\ubar(\cdot,\tau)} e^{-\lambda_1(t-\tau)}d\tau\\
 \leq&|\Om|^{\frac1s} \norm[L^\infty(\Om)]{v_0} +\intnt \left(c_1(1+(t-\tau)^{-\frac n2(1-\frac1s)})e^{-\lambda_1(t-\tau)} 2M + e^{-(t-\tau)} M|\Om|^{\frac1s-1}\right) d\tau
\end{align*}
for all $t\in(0,T)$, where $c_1$ is the constant from the $L^p$-$L^q$-estimate in \cite[Lemma 1.3 (i)]{Winkler-10}, $\lambda_1>0$ is the first positive eigenvalue of $-\Delta$ in $\Om$, and $\ubar(\tau)=\frac1{|\Om|}\io u(x,\tau) dx$, $\tau\in(0,T)$, denotes the spatial average of $u$. Thanks to $s\in[1,\frac{n}{n-2})$, we have $-\frac n2(1-\frac1s)>-1$ and hence the last expression is bounded independently of $t\in(0,T)$.\\
(ii) Let $s\in[1, \frac{n}{n-1})$. From \cite[Lemma 1.3 (iii) and Lemma 1.3 (ii)]{Winkler-10} we obtain $c_2>0$ and $c_3>0$, respectively, such that
\begin{eqnarray*}
\|\nabla v(\cdot, t)\|_{L^{s}(\Omega)}
&\leq&c_2 \|\nabla e^{t(\Delta-1)}v_0\|_{L^{s}(\Omega)}+\int_0^t\|\nabla e^{(t-\tau)(\Delta-1)}u(\cdot, \tau)\|_{L^{s}(\Omega)}d\tau\nonumber\\
&\leq& c_2\|\nabla v_0\|_{L^{\infty}(\Omega)}+c_3\int_0^t(1+(t-\tau)^{-\frac{1}{2}-\frac{n}{2}(1-\frac{1}{s})})e^{-\lambda_1(t-\tau)}\|u(\cdot, \tau)\|_{L^1(\Omega)}d\tau
\end{eqnarray*}
for any $t\in(0,T)$. Again, the inequality $-\frac12-\frac n2(1-\frac1s)>-1$ and the boundedness assumption on $u$ provide us with a time-independent bound for this expression.\\
(iii) Let $p>\frac n2$. Lemma \cite[Lemma 1.3 (i)]{Winkler-10} gives $c_1>0$ such that
\begin{align*}
 \norm[L^\infty(\Om)]{v(t)}\leq& \norm[L^\infty(\Om)]{e^{t(\Delta-1)} v_0} + \intnt \norm[L^\infty(\Om)]{e^{(t-\tau)(\Delta-1)} u(\cdot,\tau)}d\tau\\
  \leq\norm[L^\infty(\Om)]{v_0} +\intnt &\left(c_1(1+(t-\tau)^{-\frac n{2p}})\norm[L^p(\Om)]{u(\cdot,\tau)-\ubar(\cdot,\tau)}e^{-\lambda_1(t-\tau)} +e^{-(t-\tau)}\norm[L^\infty(\Om)]{\ubar(\tau)} \right)d\tau\\
  \leq\norm[L^\infty(\Om)]{v_0} +\intnt &\left(c_1(1+(t-\tau)^{-\frac n{2p}})2Me^{-\lambda_1(t-\tau)} +e^{-(t-\tau)}|\Om|^{-1} \norm[L^1(\Om)]{u(\cdot,\tau)} \right) d\tau
\end{align*}
for every $t\in(0,T)$, which again results in boundedness due to $-\frac{n}{2p}>-1$.\\
(iv) Let $p>n$. Passing to the limit $p\to \infty$ in \cite[Lemma 1.3 (iii)]{Winkler-10} and from a variant of \cite[Lemma 1.3 (iv)]{Winkler-10} (given in the form needed here e.g. in \cite[Lemma 3.1]{Lankeit_exceed}), similarly to the previous part of the proof, we obtain $c_2>0$, $c_4>0$ such that
\begin{align*}
 \norm[L^\infty(\Om)]{\na v} \leq& \norm[L^\infty]{\na e^{t(\Delta-1)} v_0} +\intnt \norm[L^\infty]{\na e^{(t-\tau)(\Delta-1)} u(\cdot,\tau)} d\tau\\
  \leq& c_2\norm[L^\infty(\Om)]{\na v_0} +c_4 \intnt (1+(t-\tau)^{-\frac12-\frac n{2p}})e^{-\lambda_1(t-\tau)} \norm[L^p(\Om)]{u(\cdot,\tau)} d\tau
\end{align*}
for any $t\in(0,T)$, which again, because $-\frac12-\frac{n}{2p}>-1$, entails uniform-in-time boundedness of $\norm[L^\infty(\Om)]{\na v}$.
\end{proof}

Due to the fact that \eqref{weq} is an ODE, $w$ can be represented explicitly in terms of $v$. Following an observation from \cite{tao2014boundedness}, this provides a one-sided pointwise estimate for $-\Delta w$:

\begin{lem}\label{lem:bdDeltaw}
Assume that $\chi>0$, $\xi>0$ and $\mu>0$, $\alpha>0$, $T>0$, and let the nonnegative functions $v,w\in C^{2,1}(\Ombar\times(0,T))\cap C^0(\Ombar\times[0,T))$ solve \eqref{veq} and \eqref{weq}, respectively, with some nonnegative $u\in C^0(\Ombar\times[0,T))$ and let $w(\cdot,0)\in C^{2+\alpha}(\Om)$ be positive in $\Om$. Then
\begin{eqnarray*}
-\Delta w(x, t)\leq\|w(\cdot,0)\|_{L^{\infty}(\Omega)}\cdot v(x, t)+K\ \ for\ all\ x\in\Omega\ and\ t\in(0, T),
\end{eqnarray*}
where
\begin{equation}\label{eq:defK}
K:=\|\Delta w(\cdot, 0)\|_{L^{\infty}(\Omega)}+4\|\nabla\sqrt{w(\cdot,0)}\|_{L^{\infty}(\Omega)}^2+\frac{\|w(\cdot,0)\|_{L^{\infty}(\Omega)}}{e}.
\end{equation}
\end{lem}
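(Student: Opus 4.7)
The plan is to exploit that \eqref{weq} is an ODE in $t$ at each point $x$, so that $w$ admits the explicit representation
\[
w(x,t)=w_0(x)\,e^{-\psi(x,t)}, \qquad \psi(x,t):=\intnt v(x,s)\,ds\geq 0,
\]
which is well defined (and keeps $w$ strictly positive) thanks to the hypothesis $w_0>0$ in $\Ombar$. Differentiating twice in $x$ yields
\[
-\Delta w = e^{-\psi}\bigl(-\Delta w_0 + 2\nabla w_0\cdot\nabla\psi + w_0\,\Delta\psi - w_0|\nabla\psi|^2\bigr),
\]
and the first key manoeuvre is to eliminate $\nabla\psi$ by completing the square:
\[
-w_0|\nabla\psi|^2 + 2\nabla w_0\cdot\nabla\psi = -w_0\Bigl|\nabla\psi - w_0^{-1}\nabla w_0\Bigr|^2 + \frac{|\nabla w_0|^2}{w_0} \leq 4\bigl|\nabla\sqrt{w_0}\bigr|^2.
\]

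The one step where the PDE structure (not merely the ODE) is used is in controlling $\Delta\psi$. Integrating \eqref{veq} in time and using $\Delta v = v_t+v-u$ gives
\[
\Delta\psi(x,t) = \intnt\Delta v(x,s)\,ds = v(x,t)-v_0(x)+\psi(x,t)-\intnt u(x,s)\,ds,
\]
so that the nonnegativity of $u$ and $v_0$ immediately produces the pointwise bound $\Delta\psi\leq v+\psi$.

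Combining the two estimates and pulling out suprema over $\Om$, I would obtain
\[
-\Delta w \leq e^{-\psi}\Bigl(\|\Delta w_0\|_{L^\infty(\Om)} + 4\|\nabla\sqrt{w_0}\|_{L^\infty(\Om)}^2 + \|w_0\|_{L^\infty(\Om)}\,v + \|w_0\|_{L^\infty(\Om)}\,\psi\Bigr).
\]
Since $\psi\ge 0$, the factor $e^{-\psi}\leq 1$ is sufficient to handle the first three summands, whereas the last summand is handled by the elementary one-variable bound $\sup_{r\ge 0} r e^{-r}= 1/e$. Putting everything together reproduces exactly the constant $K$ given in \eqref{eq:defK}, which finishes the proof. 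Conceptually this is just careful differentiation of an explicit formula plus one calculus identity; the only nontrivial observation is that the unpleasant term $-w_0|\nabla\psi|^2$ must be absorbed via completion of squares and that $\Delta\psi$ must be rewritten through \eqref{veq}, after which all remaining quantities are either controlled by the initial data or appear in the desired inequality through the single harmless factor $\|w_0\|_{L^\infty(\Om)}\,v$.
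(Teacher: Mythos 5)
Your proof is correct and is essentially the argument of \cite[Lemma 2.2]{tao2014boundedness}, which is all the paper gives by way of proof (it simply cites that lemma): the explicit representation $w=w_0e^{-\psi}$, the completion of the square absorbing $-w_0|\nabla\psi|^2+2\nabla w_0\cdot\nabla\psi$ into $4|\nabla\sqrt{w_0}|^2$, the substitution $\Delta v=v_t+v-u$ to bound $\Delta\psi\le v+\psi$, and the elementary bound $\sup_{r\ge0}re^{-r}=1/e$ are exactly the ingredients of the cited proof. The only point worth a word is the interchange of $\Delta$ with $\int_0^t$, which is justified by the stated regularity of $v$ and $w$.
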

\begin{proof}
This can be found in \cite[Lemma 2.2]{tao2014boundedness}.
\end{proof}

Let us finally recall the following local existence result, which has been established in \cite{tao2011chemotaxis} by means of a standard fixed point argument.

\begin{lem}\label{local existence}
(Local existence) Let $n\in\{2, 3, 4\}$ and $\Omega\subset \mathbb{R}^n$ be a bounded domain with smooth boundary.  Let $\chi, \xi, \mu>0$ and assume that $D$ satisfies (\ref{D12})-(\ref{D:nondeg}), and the initial data fulfils (\ref{eq:initdatacond}).  Then there exists a maximal time $T_{\mathrm{max}}\in(0, \infty]$ and a triple $(u, v, w)$ of functions from $C^0(\bar{\Omega}\times[0, T_{\mathrm{max}})\cap C^{2, 1}(\Ombar\times(0, T_{max})))$ solving \systemref\ classically in $\Omega\times(0, T_{\mathrm{max}})$ and such that
\begin{eqnarray}\label{T}
\mbox{either }\;\Tmax=\infty\;\, \mbox{ or }\;\, \limsup _{t\nearrow \Tmax}\left(\|u(\cdot, t)\|_{L^{\infty}(\Omega)}+\|v(\cdot, t)\|_{W^{1, \infty}(\Omega)}+\|w(\cdot, t)\|_{W^{1, \infty}(\Omega)}\right)=\infty.
\end{eqnarray}
Moreover,
\begin{eqnarray}\label{l1}
u\geq0,\ v\geq0,\ and\ 0\leq w\leq\|w_0\|_{L^{\infty}(\Omega)}\ in \ \Omega\times(0, T_{\mathrm{max}}).
\end{eqnarray}
\end{lem}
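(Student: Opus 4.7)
The plan is to construct the local solution by a Banach fixed point argument, exploiting the observation that \eqref{weq} is a scalar linear ODE in time whose solution is explicit:
\[
 w(x,t) = w_0(x)\exp\left(-\intnt v(x,s)\,ds\right),
\]
so that $w$ is determined pointwise by $v$. This reduces the coupled system essentially to the interplay between the quasi-linear equation \eqref{ueq} and the linear parabolic equation \eqref{veq}.

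For $T\in(0,1]$ and $R>0$ to be fixed later, I would work in the closed convex set
\[
 S := \set{\tilde u\in C^0(\Ombar\times[0,T]) : \norm[L^\infty(\Ombar\times[0,T])]{\tilde u}\leq R,\ \tilde u(\cdot,0)=u_0}
\]
and define an operator $\Phi\colon S\to C^0(\Ombar\times[0,T])$ in three steps: given $\tilde u\in S$, first solve the linear problem $v_t=\Delta v-v+\tilde u$ with homogeneous Neumann boundary data and $v(\cdot,0)=v_0$ (for instance via the heat semigroup), then set $w$ via the explicit formula above, and finally solve the quasi-linear parabolic problem
\[
 u_t = \na\cdot(D(u)\na u) - \chi\na\cdot(u\na v) - \xi\na\cdot(u\na w) + \mu u(1-u-w)
\]
with homogeneous Neumann boundary data and $u(\cdot,0)=u_0$ for these fixed $v$ and $w$, setting $\Phi(\tilde u)=u$. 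Thanks to \eqref{D:nondeg}, the last equation is uniformly parabolic as long as $u$ stays bounded, so Amann-type results for quasi-linear parabolic problems on smooth bounded domains with conormal boundary conditions provide a unique classical solution up to some small time.

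Combining $L^p$--$L^q$ estimates for $v$, Hölder regularity for $w$ derived from the explicit formula, and classical parabolic Schauder estimates for the quasi-linear step, one verifies that for $R$ large enough (depending on the initial data) and $T$ small enough (depending on $R$ and the data) $\Phi$ maps $S$ into itself and is a contraction with respect to $\norm[L^\infty(\Ombar\times[0,T])]{\cdot}$. Banach's fixed point theorem yields a classical local solution $(u,v,w)$ on $[0,T]$, and bootstrapping parabolic regularity gives $C^{2,1}$-smoothness on $\Ombar\times(0,T]$. A standard continuation argument then extends the solution up to a maximal time $\Tmax\in(0,\infty]$ with the blow-up alternative \eqref{T}: if $\Tmax<\infty$ and the quantity in \eqref{T} remained bounded up to $\Tmax$, one could restart the iteration at a time close to $\Tmax$ and extend the solution beyond it, contradicting maximality. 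The properties \eqref{l1} follow from the parabolic maximum principle applied to \eqref{veq} (yielding $v\geq 0$) and to \eqref{ueq} (yielding $u\geq 0$, since $0$ is a subsolution), combined with the explicit representation of $w$ and $v\geq 0$ to give $0\leq w\leq\norm[L^\infty(\Om)]{w_0}$.

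The technical heart of the argument is the quasi-linear step for $u$: even under \eqref{D12} and \eqref{D:nondeg}, one must track the regularity of the drift coefficients $\na v$ and $\na w$ carefully (via the $v$-step and the $w_0\in C^{2+\alpha}$ assumption) in order to get the parabolic Hölder estimates needed to close the contraction in a space where $\Phi$ is both self-mapping and Lipschitz. This is the only genuine obstacle, but it is a standard piece of quasi-linear parabolic theory; the details are carried out in \cite{tao2011chemotaxis}.
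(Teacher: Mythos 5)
Your proposal is correct and follows essentially the same route as the paper, which does not reprove the lemma but simply cites \cite[Lemma 2.1]{tao2011chemotaxis}, where exactly such a standard fixed point argument (freeze $\tilde u$, solve for $v$ by the variation-of-constants formula, obtain $w$ explicitly from the ODE, solve the uniformly parabolic quasilinear $u$-equation, contract in $L^\infty$, then continue to a maximal time and apply comparison principles for \eqref{l1}) is carried out. The one delicate point you rightly flag --- establishing the Lipschitz/contraction property of the quasilinear step --- is precisely the technical content delegated to that reference, so your sketch and the paper's proof coincide in substance.
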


\begin{proof}
See \cite[Lemma 2.1]{tao2011chemotaxis}.
\end{proof}

\section{Boundedness of solutions. Proof of Theorem \ref{Theorem1}}\label{s3}
The purpose of this section is to prove Theorem \ref{Theorem1}, which is concerned with the non-degenerate case.

For the rest of this section let us fix $n\in\{2, 3, 4\}$ and the initial data satisfying (\ref{eq:initdatacond}). Given $\mu, \chi, \xi\ge 0$ and a function $D$, by $(u, v, w)$ we will always denote the corresponding solution to \systemref\ given by Lemma \ref{local existence} and by $T_{max}$ its maximal time of existence. The value of $K$ will be as defined by \eqref{eq:defK}.

Now we proceed to establish the main step towards our boundedness proof.  Motivated by \cite{Tao-Winkler-jde}, we establish a differential inequality for the expression $\io u^p+\io |\nabla v|^{2q}$ on $(0, T_{\mathrm{max}})$ for $p, q>1$, from which we will be able to derive a bound on $\io u^p+\io |\nabla v|^{2q}$.

\begin{lem}\label{l4} Let $\mu, \chi, \xi\ge 0$, $\delta>0$ and $m>1$. For any function $D$ satisfying \eqref{D12} and \eqref{D:nondeg}
and for any $p>1$ the solution $(u,v,w)$ to \systemref\ satisfies
\begin{align}\label{p1}
\frac{1}{p}\frac{d}{dt}&\io u^p+\frac{p-1}4\io D(u) u^{p-2} |\na u|^2 + \frac{\delta(p-1)}{(p+m-1)^2}\io |\na u^{\frac{p+m-1}2}|^2 \\\leq&\frac{\chi^2(p-1)}{\delta}\io u^{p-m+1}|\nabla v|^2
+\xi||w_0||_{L^{\infty}(\Omega)}\io u^p v+(\mu+\xi K)\io u^{p}-\mu\io u^{p+1}\nonumber
\end{align}
for all $t\in(0, T_{\mathrm{max}})$ and with $K$ as in \eqref{eq:defK}.
\end{lem}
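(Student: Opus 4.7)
The plan is to test the first equation \eqref{ueq} against $u^{p-1}$, exploit the flux-type boundary condition to integrate by parts, and then estimate the three resulting cross terms (diffusion + chemotaxis, haptotaxis, logistic) one at a time. Since $p>1$ we have $u^{p-1}\in C^1(\Ombar\times(0,\Tmax))$, and Lemma \ref{local existence} together with \eqref{rb} guarantees enough regularity to justify all the computations. Using the boundary condition $\frac{\partial u}{\partial\nu}-\chi u\frac{\partial v}{\partial\nu}-\xi u\frac{\partial w}{\partial\nu}=0$, integrating by parts produces the identity
\begin{align*}
\frac1p\ddt \io u^p = -(p-1)\io D(u) u^{p-2}|\na u|^2 + \chi(p-1)\io u^{p-1}\na u\cdot\na v + \xi(p-1)\io u^{p-1}\na u\cdot\na w + \mu\io u^p(1-u-w).
\end{align*}

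For the chemotaxis term I would split off a factor $D(u)^{1/2}u^{(p-2)/2}\na u$ and apply Young's inequality with a small parameter chosen so that a quarter of the diffusion term is absorbed; using $D(u)\ge \delta u^{m-1}$ in the remainder yields $\frac{p-1}{4}\io D(u) u^{p-2}|\na u|^2+\frac{\chi^2(p-1)}{\delta}\io u^{p-m+1}|\na v|^2$. The decisive trick for the haptotaxis term is to rewrite
\[
 \xi(p-1)\io u^{p-1}\na u\cdot\na w = \frac{\xi(p-1)}{p}\io \na(u^p)\cdot\na w = -\frac{\xi(p-1)}{p}\io u^p\,\Delta w,
\]
which is valid thanks to the Neumann condition on $w$, and then invoke the pointwise estimate $-\Delta w\le \|w_0\|_\infty v + K$ from Lemma \ref{lem:bdDeltaw}; after bounding $\frac{p-1}{p}\le1$ this gives exactly the $\xi\|w_0\|_\infty\io u^p v+\xi K\io u^p$ contributions on the right-hand side. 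The logistic term contributes $\mu\io u^p-\mu\io u^{p+1}$ (dropping the $-\mu\io u^p w\le 0$ term due to $w\ge0$).

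Combining these estimates, on the left there remains $\frac{3(p-1)}{4}\io D(u) u^{p-2}|\na u|^2$, which I decompose as $\frac{p-1}{4}+\frac{p-1}{2}$. The first piece stays as written; for the second I use $D(u)\ge \delta u^{m-1}$ and the identity $|\na u^{(p+m-1)/2}|^2=\frac{(p+m-1)^2}{4}u^{p+m-3}|\na u|^2$ to obtain $\frac{2\delta(p-1)}{(p+m-1)^2}\io|\na u^{(p+m-1)/2}|^2$, which is at least the coefficient claimed. No real obstacle appears here; the only delicate point is keeping enough of the diffusion term on the left both to absorb the chemotactic contribution and to produce the gradient term in $u^{(p+m-1)/2}$ needed for the Gagliardo--Nirenberg machinery of the following lemmata.
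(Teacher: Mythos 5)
Your proposal is correct and follows essentially the same route as the paper: test with $u^{p-1}$, integrate by parts, absorb the chemotactic term into the diffusion term via Young's inequality together with $D(u)\ge\delta u^{m-1}$, rewrite the haptotactic term as $-\frac{\xi(p-1)}{p}\io u^p\Delta w$ and apply Lemma \ref{lem:bdDeltaw}, and drop $-\mu\io u^pw\le 0$. The only (immaterial) difference is bookkeeping: the paper first splits $(p-1)\io D(u)u^{p-2}|\na u|^2$ into a retained $D(u)$-half and a $\delta u^{m+p-3}$-half and absorbs into the latter, whereas you absorb a quarter of the $D(u)$-term directly and convert afterwards, ending with an even larger coefficient on the $|\na u^{(p+m-1)/2}|^2$ term.
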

\begin{proof}
Multiplying \eqref{ueq} by $u^{p-1}$ and integrating over $\Omega$ we obtain
\begin{eqnarray}\label{p11}
\frac{1}{p}\frac{d}{dt}\io u^p+(p-1)\io D(u) u^{p-2}|\nabla u|^2&\leq&\chi(p-1)\io u^{p-1}\nabla u\cdot\nabla v
+\xi(p-1)\io u^{p-1}\nabla u\cdot\nabla w\nn\\
&&+\mu\io u^p-\mu\io u^{p+1}-\mu\io u^pw
\end{eqnarray}
on $(0,\Tmax)$.
Here we split the integral containing $D$ into one part we will keep until we need it in Section \ref{s4}
and one part that can be used to cancel less favorable contributions by other integrals: According to \eqref{D12},
\begin{equation}\label{eq:splitD}
(p-1)\io D(u) u^{p-2}|\nabla u|^2 \geq \frac{p-1}2 \io D(u) u^{p-2}|\na u|^2 + \frac{\delta(p-1)}2 \io u^{m+p-3} |\na u|^2,
\end{equation}
which again holds on the whole time-interval $(0,\Tmax)$.
In the next integral in \eqref{p11} an application of Young's inequality yields
\begin{eqnarray}\label{交叉项1}
\chi(p-1)\io u^{p-1}\nabla u\cdot \nabla v\leq\frac{\delta(p-1)}{4}\io u^{m+p-3}|\nabla u|^2+\frac{\chi^2(p-1)}{\delta}\io u^{p-m+1}|\nabla v|^2
\end{eqnarray}
on $(0,\Tmax)$.
According to an integration by parts and Lemma \ref{lem:bdDeltaw}, we also have
\begin{eqnarray}\label{交叉项2}
\xi(p-1)\io u^{p-1}\nabla u\cdot\nabla w&=&\frac{\xi(p-1)}{p}\io u^p(-\Delta w)\nonumber \\
&&\leq\frac{\xi(p-1)}{p}\io u^p(||w_0||_{L^{\infty}(\Omega)}\cdot v+K)\nonumber\\
&&\leq\xi||w_0||_{L^{\infty}(\Omega)}\io u^p v+\xi K\io u^p \quad \mbox{ on }(0,\Tmax)
\end{eqnarray}
with $K$ as in \eqref{eq:defK}.
Substituting (\ref{交叉项1}), \eqref{eq:splitD} and (\ref{交叉项2}) into (\ref{p11}), we obtain
\begin{align*}
\frac{1}{p}\frac{d}{dt}&\io u^p+\frac{p-1}2\io D(u)u^{p-2}|\na u|^2+\frac{\delta(p-1)}{4}\io u^{p+m-3}|\na u|^2\\
 \leq&\frac{\chi^2(p-1)}{\delta}\io u^{p-m+1}|\nabla v|^2 +\xi||w_0||_{L^{\infty}(\Omega)}\io u^p v\nn\\
&+ \xi K\io u^{p} + \mu \int u^p -\mu\io u^{p+1} -\mu \io u^pw \qquad \mbox{ on }(0,\Tmax). \nonumber
\end{align*}
Finally inserting $u^{p+m-3}|\na u|^2 = (\frac{2}{p+m-1})^2 |\na u^{\frac{p+m-1}2}|^2$ and using the nonnegativity of $\io u^pw$, we arrive at \eqref{p1}.
\end{proof}

One of the terms on the right-hand side of \eqref{p1} that has to be dealt with is the integral $\io u^pv$. Since at the moment high powers of $u$ and $v$ are out of reach for our estimates, we will use the Gagliardo-Nirenberg inequality and gradient terms to control this integral:

\begin{lem}\label{l5}
Let $\mu, \chi, \xi\ge 0$. Let $\delta>0$ and $m>2-\frac4n$.
For all $p\in[1,\infty)$ and $\eta>0$ there exists a constant $C=C(p,\eta)>0$ such that for any function $D$ fulfilling \eqref{D12} and \eqref{D:nondeg} the solution to \systemref\ satisfies
\begin{eqnarray}\label{u^pv}
\xi||w_0||_{L^{\infty}(\Omega)}\io u^p(\cdot, t)v(\cdot, t)\leq \eta \norm[L^2(\Om)]{\nabla u^{\frac{p+m-1}{2}}(\cdot, t)}^2+ C
\end{eqnarray}
for all $t\in(0, T_{\mathrm{max}})$.
\end{lem}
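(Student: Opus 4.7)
\textbf{Proof plan for Lemma \ref{l5}.} The starting point is a Hölder decomposition of the integral $\io u^p v$ into a high power of $u$ times an $L^s$-norm of $v$, where $s$ must be chosen below the threshold $\frac{n}{n-2}$ guaranteed by Lemma \ref{lplqestimates}(i). Specifically, since Lemma \ref{lem:massbd} provides $\|u(\cdot,t)\|_{L^1(\Om)}\leq m^*$, for any fixed $s\in[1,\frac{n}{n-2})$ that lemma yields a constant $c_0$ with $\|v(\cdot,t)\|_{L^s(\Om)}\leq c_0$ uniformly on $(0,\Tmax)$. Writing $s'=\frac{s}{s-1}$, Hölder's inequality then gives
\[
\xi\|w_0\|_{L^\infty(\Om)}\io u^p v\;\leq\;\xi\|w_0\|_{L^\infty(\Om)}c_0\,\|u(\cdot,t)\|_{L^{ps'}(\Om)}^{p},
\]
so the task reduces to interpolating $\|u\|_{L^{ps'}}^p$ between the gradient quantity $\|\nabla u^{(p+m-1)/2}\|_{L^2}$ and the bounded mass $\|u\|_{L^1}$.

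To do so, I set $f:=u^{(p+m-1)/2}$ and $r:=\frac{2ps'}{p+m-1}$, so that $\|u\|_{L^{ps'}}^{p}=\|f\|_{L^r}^{2p/(p+m-1)}$. The Gagliardo–Nirenberg inequality stated earlier in the paper (which admits the low exponent $q_0:=\frac{2}{p+m-1}\in(0,2)$) yields
\[
\|f\|_{L^r}\leq c_1\!\left(\|\nabla f\|_{L^2}^{a}\|f\|_{L^{q_0}}^{1-a}+\|f\|_{L^{q_0}}\right),\qquad a=\frac{\frac{1}{q_0}-\frac{1}{r}}{\frac{1}{q_0}+\frac{1}{n}-\frac{1}{2}},
\]
provided $q_0\leq r$ (automatic because $ps'\geq 1$) and, when $n\geq 3$, $r\leq \frac{2n}{n-2}$; the latter reduces to $p(n-4)<2(m-1)$ and holds trivially for $n\in\{2,3,4\}$. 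Since $\|f\|_{L^{q_0}}^{q_0}=\io u\leq m^*$, raising to the $\frac{2p}{p+m-1}$-th power and applying the elementary power lemma yields
\[
\|f\|_{L^r}^{2p/(p+m-1)}\;\leq\;c_2\Bigl(\|\nabla f\|_{L^2}^{\,2pa/(p+m-1)}+1\Bigr).
\]

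The crux is then an exponent-counting computation showing
\[
\frac{2pa}{p+m-1}<2,\qquad\text{equivalently}\qquad -\frac{1}{2s'}<\frac{m-2}{2}+\frac{1}{n},
\]
after cancellation. When $m\geq 2-\tfrac{2}{n}$ the right-hand side is nonnegative and the inequality is automatic. When $m<2-\tfrac{2}{n}$, it is equivalent to $s'<\tfrac{n}{n(2-m)-2}$, and the hypothesis $m>2-\tfrac{4}{n}$ is precisely what guarantees $\tfrac{n}{n(2-m)-2}>\tfrac{n}{2}$, so that an admissible $s'>\tfrac{n}{2}$ (equivalently, an admissible $s<\tfrac{n}{n-2}$) can indeed be selected. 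Fixing such an $s$ once and for all, Young's inequality absorbs the gradient term into $\eta\|\nabla f\|_{L^2}^2=\eta\|\nabla u^{(p+m-1)/2}\|_{L^2}^2$ and produces the asserted constant $C=C(p,\eta)$.

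\textbf{Anticipated difficulty.} The bookkeeping is entirely driven by matching the Gagliardo–Nirenberg exponent $a$ against the target power $2$: the choice of $s$ (and hence $s'$) is constrained simultaneously from below by Lemma \ref{lplqestimates}(i) and from above by the Young-absorption condition, and showing that these two windows overlap is exactly where the dimensional restriction $m>2-\tfrac{4}{n}$ enters. Once this admissible range is identified, the rest of the argument is routine interpolation and Young's inequality.
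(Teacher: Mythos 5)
Your proposal is correct and follows essentially the same route as the paper: Hölder's inequality pairing $u^{ps'}$ with $\|v\|_{L^s}$ for $s$ slightly below $\tfrac{n}{n-2}$ (furnished by Lemma \ref{lplqestimates}(i) and the mass bound), Gagliardo--Nirenberg interpolation of $u^{\frac{p+m-1}{2}}$ against $\|u\|_{L^1}$, and Young's inequality, with the exponent condition $\frac{2pa}{p+m-1}<2$ reducing exactly to the paper's condition \eqref{eq:choicer} (your $s'$ is the paper's $r=\tfrac{n}{2-\gamma}$). The only point to tighten is that the Gagliardo--Nirenberg admissibility $\tfrac12\le\tfrac1n+\tfrac{p+m-1}{2ps'}$ imposes an additional upper bound $s'\le\tfrac{n(p+m-1)}{p(n-2)}$ (not just $p(n-4)<2(m-1)$), which also strictly exceeds $\tfrac n2$, so one must fix $s'$ sufficiently close to $\tfrac n2$ --- precisely the role of the parameter $\gamma$ in the paper's proof.
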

\begin{proof}
Because $(n-4)p\leq0 < 2(m-1)$ and $m>2-\frac{4}{n}$ guarantee that
\begin{eqnarray*}
\frac{p+m-1}{2p\frac{n}{2}}>\frac{n-2}{2n}\quad \mbox{ and }\quad \frac{p-\frac2n}{\frac{p+m-1}2-\frac{n-2}{2n}} <2,
\end{eqnarray*}
it is possible to fix $\gamma\in(0, 1)$ so small that still
\begin{equation}\label{eq:choicer}
\frac{p+m-1}{2p\frac{n}{2-\gamma}}>\frac{n-2}{2n} \quad\mbox{and}\quad \frac{p-\frac{2-\gamma} n}{\frac{p+m-1}2-\frac{n-2}{2n}}<2.
\end{equation}
We set $r=\frac{n}{2-\gamma}$ and $r^{'}=\frac{n}{n-2+\gamma}$ and note that by Lemma \ref{lplqestimates}(i) applied to $s=r^{'}$, there exists $c>0$ such that $\norm[L^{r^{'}}(\Om)]{v(\cdot,t)}\leq c$ for all $t\in(0, \Tmax)$.  An application of H\"{o}lder's inequality then asserts
\begin{equation*}
\io u^p(\cdot,t)v(\cdot,t)\leq\bigg(\io u^{pr}(\cdot,t)\bigg)^{\frac{1}{r}}\!\!\cdot\bigg(\io v^{r^{'}}(\cdot,t)\bigg)^{\frac{1}{r^{'}}}\; \mbox{ for all } t\in (0,\Tmax).
\end{equation*}
As $0\leq \frac{2}{p+m-1}\leq \frac{2pr}{p+m-1}$ and, by \eqref{eq:choicer}, $\frac12\leq \frac1n+\frac{p+m-1}{2pr}$, from the Gagliardo-Nirenberg inequality and Lemma \ref{lem:massbd}, we obtain constants $c_1, c_2>0$ such that
\begin{eqnarray*}
\bigg(\io u^{pr}\bigg)^{\frac{1}{r}}&=&||u^{\frac{p+m-1}{2}}||_{L^{\frac{2pr}{p+m-1}}(\Omega)}^{\frac{2p}{p+m-1}}\nonumber\\
&\leq& c_1\left (||\nabla u^{\frac{p+m-1}{2}}||_{L^2(\Omega)}^{\frac{2p}{p+m-1}\cdot a}||u^{\frac{p+m-1}{2}}||_{L^{\frac{2}{p+m-1}}}^{\frac{2p}{p+m-1}(1-a)}+||u^{\frac{p+m-1}{2}}||_{L^{\frac{2}{p+m-1}}}^{\frac{2p}{p+m-1}}\right)\nonumber\\
&\leq& c_2\left(\norm[L^2(\Om)]{\nabla u^{\frac{p+m-1}{2}}}^{\frac{2p}{p+m-1}\cdot a}+1\right) \qquad \mbox{ on }(0,\Tmax)
\end{eqnarray*}
with
\begin{eqnarray*}
a=\frac{\frac{p+m-1}{2}-\frac{p+m-1}{2pr}}{\frac{p+m-1}{2}+\frac1n-\frac12}=\frac{p+m-1}2\cdot \frac{1-\frac{2-\gamma}{pn}}{\frac{p+m-1}{2}-\frac{n-2}{2n}}.
\end{eqnarray*}
Since
\begin{eqnarray*}
\frac{2p}{p+m-1}\cdot a= \frac{p-\frac{2-\gamma}n}{\frac{p+m-1}2-\frac{n-2}{2n}}<2,
\end{eqnarray*}
by \eqref{eq:choicer}, an application of Young's inequality thus completes the proof: Given $\eta>0$ it provides $C>0$ such that
\begin{align*}
\xi\norm[L^\infty(\Om)]{w_0}\io u^p(\cdot,t)v(\cdot,t)\leq&\xi \norm[L^\infty(\Om)]{w_0} c c_2 \left(\norm[L^2(\Om)]{\nabla u^{\frac{p+m-1}{2}}(\cdot,t)}^{\frac{2p}{p+m-1}\cdot a}+1\right)\\
\leq& \eta \norm[L^2(\Om)]{\nabla u^{\frac{p+m-1}{2}}(\cdot, t)}^2+ C
\end{align*}
for all $t\in(0,\Tmax)$.
\end{proof}

The next part of the differential inequality we are searching for is given by the following lemma.

\begin{lem}\label{l6}
Let $\mu, \chi, \xi\ge 0$. Let $\delta>0$ and $m>1$. For $q\in[1,\infty)$ there exists a constant $C=C(q)>0$ such that for any function $D$ with \eqref{D12} and \eqref{D:nondeg}, the solution to \systemref\ obeys
\begin{eqnarray}\label{nabla v}
\frac{1}{q}\frac{d}{dt}\io |\nabla v|^{2q}+2\io|\nabla v|^{2q}+\frac{(q-1)}{q^2}\io |\nabla|\nabla v|^q|^2\leq C\io u^2|\nabla v|^{2q-2}+C
\end{eqnarray}
on $(0, \Tmax)$.
\end{lem}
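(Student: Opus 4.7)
The plan is to differentiate $\io|\na v|^{2q}$ in time, substitute $v_t=\Delta v-v+u$ from \eqref{veq}, and then combine the Bochner identity with integration by parts to isolate the dissipative term $\io|\na|\na v|^q|^2$. The arising boundary integral is handled via Lemma \ref{lem:bdryestimate}, while the $u$-coupling is absorbed through Young's inequality at the cost of the term $\io u^2|\na v|^{2q-2}$ on the right-hand side.

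Concretely, the chain rule and \eqref{veq} yield
\[
\frac{1}{2q}\ddt\io|\na v|^{2q}=\io|\na v|^{2q-2}\na v\cdot\na\Delta v-\io|\na v|^{2q}+\io|\na v|^{2q-2}\na v\cdot\na u.
\]
The Bochner identity $2\na v\cdot\na\Delta v=\Delta|\na v|^2-2|D^2v|^2$, combined with another integration by parts (the boundary contribution from $\na\Delta v$ vanishes by $\partial_\nu v=0$, whereas one from $\Delta|\na v|^2$ remains) and the identity $|\na v|^{2q-4}|\na|\na v|^2|^2=\frac{4}{q^2}|\na|\na v|^q|^2$, transforms the first integral into
\[
\io|\na v|^{2q-2}\na v\cdot\na\Delta v=-\frac{2(q-1)}{q^2}\io|\na|\na v|^q|^2-\io|\na v|^{2q-2}|D^2v|^2+\tfrac{1}{2}\intdom|\na v|^{2q-2}\tfrac{\partial|\na v|^2}{\partial\nu}.
\]
Since $\io|\na v(\cdot,t)|$ is bounded uniformly in $t\in(0,\Tmax)$ by Lemma \ref{lplqestimates}(ii), Lemma \ref{lem:bdryestimate} bounds the boundary integral by $\eta\io|\na|\na v|^q|^2+C_\eta$ for any $\eta>0$, and for small $\eta$ its gradient part is absorbed into a fraction of the good term above.

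For the coupling integral, integration by parts (with the boundary term vanishing by $\partial_\nu v=0$) gives
\[
\io|\na v|^{2q-2}\na v\cdot\na u=-\io u|\na v|^{2q-2}\Delta v-(q-1)\io u|\na v|^{2q-4}\na|\na v|^2\cdot\na v.
\]
To the first integrand I apply Young's inequality together with $|\Delta v|^2\leq n|D^2v|^2$ to produce an upper bound of the form $\eps\io|\na v|^{2q-2}|D^2v|^2+C(\eps)\io u^2|\na v|^{2q-2}$; for $\eps$ small the $|D^2v|^2$-part is absorbed into $\io|\na v|^{2q-2}|D^2v|^2$ from Bochner. For the second integrand, the identity $\na|\na v|^2=\frac{2}{q}|\na v|^{2-q}\na|\na v|^q$ rewrites it as $\frac{2(q-1)}{q}u|\na v|^{q-1}|\na|\na v|^q|$, and Young's inequality once more yields $\eps'\io|\na|\na v|^q|^2+C(\eps')\io u^2|\na v|^{2q-2}$, the first piece being absorbed into the remaining fraction of $\frac{2(q-1)}{q^2}\io|\na|\na v|^q|^2$.

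Multiplying the resulting estimate by $2$ and moving the dissipative contributions to the left-hand side yields \eqref{nabla v}. The main bookkeeping obstacle is to choose $\eta,\eps,\eps'>0$ small enough that after all three absorptions a coefficient of at least $\frac{q-1}{q^2}$ remains in front of $\io|\na|\na v|^q|^2$; this is possible because the coefficient generated by Bochner's identity (after multiplication by $2$) is $\frac{4(q-1)}{q^2}$, leaving enough margin to spare $\frac{3(q-1)}{q^2}$ for absorption. The boundary case $q=1$ is trivial, as then both $|\na|\na v|^q|^2$-contributions disappear and only the first summand of the coupling integral needs to be swallowed by $\io|D^2v|^2$.
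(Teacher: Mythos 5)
Your argument is correct and follows essentially the same route as the paper's proof: differentiate, insert \eqref{veq}, apply the Bochner identity with two integrations by parts, control the boundary integral via Lemma \ref{lem:bdryestimate} (justified by the uniform $L^1$ bound on $\na v$), and absorb the $u$-coupling through Young's inequality together with $|\Delta v|^2\le n|D^2v|^2$. The only differences are cosmetic: a factor of $2$ in the normalization of the time derivative and generic small parameters $\eta,\eps,\eps'$ in place of the paper's explicit choices that make the $|D^2v|^2$-terms cancel exactly.
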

\begin{proof}
This can be proven by merging the estimate from \cite[Prop. 3.2]{ishida2014boundedness} with some of the calculations from the proof of \cite[Lemma 3.3]{Tao-Winkler-jde}:
We first observe that
\begin{equation}\label{eq:ddtionavq}
 \frac{1}{q}\frac{d}{dt}\io |\nabla v|^{2q} = 2\io |\na v|^{2q-2}\na v\na\Delta v-2\io |\na v|^{2q-2}\na v\cdot\na v+2\io |\na v|^{2q-2}\na v\cdot\na u
\end{equation}
holds throughout $(0,\Tmax)$, and use the pointwise identities
\begin{equation} \label{eq:pwid}
 2\na v\na\Delta v = \Delta |\na v|^2 - 2|D^2 v|^2\qquad \mbox{ and } \qquad \na |\na v|^{2q-2}=(q-1)|\na v|^{2q-4}\na|\na v|^2
\end{equation}
together with an integration by parts to obtain
\begin{align*}
 2\io |\na v|^{2q-2}\na v\na\Delta v=&\io |\na v|^{2q-2}\Delta |\na v|^2 - 2\io |\na v|^{2q-2}|D^2 v|^2\\
=& - (q-1) \io |\na v|^{2q-4} | \na |\na v|^2 |^2 + \intdom |\na v|^{2q-2}\frac{\partial |\na v|^2}{\partial\nu}\\
&- 2\io |\na v|^{2q-2}|D^2 v|^2\qquad \mbox{ on } (0,\Tmax).
\end{align*}
Another integration by parts, \eqref{eq:pwid} and Young's inequality make it possible to estimate the rightmost term in \eqref{eq:ddtionavq} according to
\begin{align*}
 2\io |\na v|^{2q-2} \na v\cdot \na u =& -2(q-1) \io u|\na v|^{2q-4}\na|\na v|^2 \cdot\na v - 2\io u |\na v|^{2q-2}\Delta v \\
 \leq & \frac{q-1}2 \io |\na v|^{2q-4}|\na|\na v|^2|^2 + 2(q-1)\io u^2|\na v|^{2q-2}\\ &+ \frac2n \io |\na v|^{2q-2}n|D^2v|^2 + \frac n2\io u^2|\na v|^{2q-2} \qquad \mbox{ on } (0,\Tmax),
\end{align*}
where we have used $|\Delta v|^2\leq n|D^2 v|^2$ in the second-last integral.
Adding these estimates, we arrive at
\begin{align*}
 \frac{1}{q}\frac{d}{dt}\io |\nabla v|^{2q} +2\io |\na v|^{2q} \leq&
- \frac{q-1}2 \io |\na v|^{2q-4} | \na |\na v|^2 |^2  \\&+ (2(q-1)
+\frac n2) \io u^2|\na v|^{2q-2} + \intdom |\na v|^{2q-2}\frac{\partial |\na v|^2}{\partial\nu}
\end{align*}
on $(0,\Tmax)$ and thus at \eqref{nabla v} if we take into account that $|\na v|^{2q-4}|\na|\na v|^2|^2=\frac4{q^2}|\na |\na v|^q|^2$ and use Lemma \ref{lem:bdryestimate}, which is applicable by a combination of Lemma \ref{lplqestimates} (ii) and Lemma \ref{lem:massbd}, to gain $c>0$ such that
\[
 \intdom |\na v|^{2q-2} \frac{\partial |\na v|^2}{\partial \nu} \leq \frac{q-1}{q^2}\io |\na|\na v|^q|^2 + c.\qquad \mbox{on } (0,\Tmax)\qedhere
\]
\end{proof}

The terms on the right-hand side of \eqref{p1} we have not yet treated are $+(\xi K+\mu) \io u^p - \mu \io u^{p+1}$. For positive $\mu$ they could easily be estimated from above by a constant. In order to also cover the case of $\mu=0$, we prepare the following estimate instead:

\begin{lem}\label{lem:intup}
 Let $\mu,\chi,\xi\ge 0$. Let $\delta>0$ and $m>1-\frac2n$. Then for any $p>1$ and $\eta>0$ there is a constant $C=C(\eta)>0$ such that for any function $D$ with \eqref{D12} and \eqref{D:nondeg}, the solution to \systemref\ satisfies
\[
 \io u^p \leq \eta \io |\na u^{\frac{p+m-1}2}|^2+ C
\]
 on $(0,\Tmax)$.
\end{lem}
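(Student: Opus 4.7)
The plan is to reduce the claim to a Gagliardo--Nirenberg estimate applied to $f:=u^{\frac{p+m-1}{2}}$, using the mass bound from Lemma \ref{lem:massbd} to control low-order terms.

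First, I would rewrite the left-hand side as
\[
\io u^p = \io f^{\frac{2p}{p+m-1}} = \norm[L^{\frac{2p}{p+m-1}}(\Om)]{f}^{\frac{2p}{p+m-1}}.
\]
Then I would apply Lemma 2.3 (the generalized Gagliardo--Nirenberg inequality) with $r=2$, $p_{GN}=\frac{2p}{p+m-1}$, and $q_{GN}=s=\frac{2}{p+m-1}$. This choice is motivated by $\norm[L^{q_{GN}}(\Om)]{f}^{q_{GN}} = \io u$, which is uniformly bounded on $(0,\Tmax)$ by Lemma \ref{lem:massbd}; note that $q_{GN}\leq p_{GN}$ holds since $p\geq 1$, so Lemma 2.3 is indeed applicable (including the regime $q_{GN}<1$).

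Next I would verify the two algebraic conditions. The admissibility condition \eqref{eq:GNcondition} becomes $\tfrac{1}{2}\leq \tfrac{1}{n}+\tfrac{p+m-1}{2p}$, equivalent to $n(1-m)\leq 2p$, which is automatic from $m>1-\tfrac{2}{n}$ and $p\geq 1$. The interpolation exponent from \eqref{eq:GNdefa} is
\[
a=\frac{\tfrac{p+m-1}{2}-\tfrac{p+m-1}{2p}}{\tfrac{p+m-1}{2}+\tfrac{1}{n}-\tfrac{1}{2}},
\]
and a short computation shows that the key quantity
\[
\frac{2p}{p+m-1}\cdot a = \frac{p-1}{\tfrac{p+m-1}{2}+\tfrac{1}{n}-\tfrac{1}{2}}
\]
is strictly less than $2$ precisely when $m>1-\tfrac{2}{n}$. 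This strict inequality is the crucial point that makes the argument work and is the main (though still routine) obstacle: everything hinges on verifying that this exponent is subquadratic in $\norm[L^2(\Om)]{\nabla f}$.

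Finally, Lemma 2.3 delivers
\[
\io u^p \leq c\Bigl(\norm[L^2(\Om)]{\nabla f}^{\frac{2p}{p+m-1}a}\cdot \norm[L^{q_{GN}}(\Om)]{f}^{\frac{2p}{p+m-1}(1-a)} + \norm[L^s(\Om)]{f}^{\frac{2p}{p+m-1}}\Bigr),
\]
and since $\norm[L^{q_{GN}}(\Om)]{f}$ and $\norm[L^s(\Om)]{f}$ are bounded by powers of $\io u\leq m^*$ from Lemma \ref{lem:massbd}, I would apply Young's inequality to the first term (using the exponent $\frac{2p}{p+m-1}a<2$) to absorb it into $\eta\io|\nabla u^{\frac{p+m-1}{2}}|^2$, with all remaining constants collected into $C=C(\eta)$. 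This yields the desired estimate on $(0,\Tmax)$.
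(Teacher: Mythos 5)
Your proposal is correct and follows essentially the same route as the paper: both apply the generalized Gagliardo--Nirenberg inequality to $u^{\frac{p+m-1}{2}}$ with the same exponents $\frac{2p}{p+m-1}$ and $\frac{2}{p+m-1}$, use the mass bound from Lemma \ref{lem:massbd} to control the low-order norms, and verify that the resulting exponent on $\norm[L^2(\Om)]{\na u^{\frac{p+m-1}{2}}}$ is subquadratic precisely because $m>1-\frac2n$, before absorbing via Young's inequality. Your computation $\frac{2p}{p+m-1}a=\frac{p-1}{\frac{p+m-1}{2}+\frac1n-\frac12}<2$ is exactly the paper's condition $\frac{p}{p+m-1}a<1$.
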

\begin{proof}
Once more, an application of Lemma \ref{lem:massbd} shows that $\io u(\cdot,t) \leq c_1$ for some $c_1>0$. 
Obviously, $\frac{2p}{p+m-1}>\frac{2}{p+m-1}$ and $\frac12<\frac1n+\frac12+\frac{m-1}{2p} = \frac1n+\frac{p+m-1}{2p}$, 
therefore the Gagliardo-Nirenberg inequality asserts the existence of $c_2>0$ such that 
\begin{align*}
  \io u^p =& \norm[L^{\frac{2p}{p+m-1}}(\Om)]{u^{\frac{p+m-1}2}}^{\frac{2p}{p+m-1}}\\
 \leq& c_2 \left(\norm[L^2(\Om)]{\na u^{\frac{p+m-1}2}}^{\frac{2p}{p+m-1}a}\norm[L^{\frac2{p+m-1}}(\Om)]{u^{\frac{p+m-1}2}}^{\frac{2p}{p+m-1}(1-a)}
 +\norm[L^{\frac{2}{p+m-1}}(\Om)]{u^{\frac{p+m-1}{2}}}^{\frac{2p}{p+m-1}} \right)\\
 \leq& c_2c_1^{\frac{2p}{p+m-1}(1-a)}\left(\io |\na u^{\frac{p+m-1}2}|^2\right)^{\frac{p}{p+m-1}a} +c_2c_1^{\frac{2p}{p+m-1}}
\end{align*}
on $(0,\Tmax)$, where 
\[
 a=\frac{\frac{p+m-1}{2}-\frac{p+m-1}{2p}}{\frac{p+m-1}{2}+\frac1n-\frac12}
\]
and 
\[
 \frac{p}{p+m-1}a=\frac{p}{p+m-1} \cdot \frac{(p+m-1)(1-\frac1p)}{p+m-1+\frac2n-1}= \frac{p-1}{p-1+(m-1+\frac2n)}<1
\]
so that an application of Young's inequality gives the desired conclusion.
\end{proof}

What we have achieved with the previous estimates is the following:

\begin{cor}\label{c7}
Let $\mu, \chi, \xi\ge 0$. Let $\delta>0$ and $m>2-\frac4n$. For any $1<p, q<\infty$ there exists a constant $C=C(p, q)>0$ such that for any $D$ fulfilling \eqref{D12} and \eqref{D:nondeg}, the solution to \systemref\ satisfies
\begin{align}\label{eq:diffineq_prelim}
\frac{d}{dt}&\bigg\{\io u^p+\io |\nabla v|^{2q}\bigg\}+\frac{p(p-1)}4\io D(u)u^{p-2}|\na u|^2 + \frac{\delta p(p-1)}{2(p+m-1)^2}\io |\nabla u^{\frac{p+m-1}{2}}|^2\nn\\
&+2q\io|\nabla v|^{2q}+\frac{(q-1)}{q}\io \left|\nabla|\nabla v|^q\right|^2
\leq C\io u^{p-m+1}|\nabla v|^2+C\io u^2|\nabla v|^{2q-2}+C
\end{align}
on the whole time-interval $(0, T_{\mathrm{max}})$.
\end{cor}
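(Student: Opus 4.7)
The proof is essentially a weighted combination of the three preceding lemmas. My plan is to form the sum of $p$ times \eqref{p1} from Lemma \ref{l4} and $q$ times \eqref{nabla v} from Lemma \ref{l6}. The time derivative and all five dissipation-type quantities on the left-hand side of \eqref{eq:diffineq_prelim} then appear automatically, with the coefficient $\frac{\delta p(p-1)}{(p+m-1)^2}$ in front of $\io |\na u^{\frac{p+m-1}{2}}|^2$ rather than the desired $\frac{\delta p(p-1)}{2(p+m-1)^2}$. The two contributions $\frac{\chi^2 p(p-1)}{\delta}\io u^{p-m+1}|\na v|^2$ and $Cq\io u^2|\na v|^{2q-2}$ appearing on the right-hand side are already of the form permitted by \eqref{eq:diffineq_prelim} (the prefactors depend only on $p$ and $q$ and can be folded into $C(p,q)$), and the additive constant $Cq$ can be rolled into the trailing $+C$.

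What remains is to dispose of the three ``bad'' quantities $p\xi\norm[L^\infty(\Om)]{w_0}\io u^p v$, $p(\mu+\xi K)\io u^p$ and $-p\mu\io u^{p+1}$. The last is nonpositive and is simply dropped. For the first, I apply Lemma \ref{l5}, whose hypothesis $m>2-\frac{4}{n}$ is exactly the standing assumption of the corollary, with its free parameter $\eta$ chosen so that $p\eta=\frac{\delta p(p-1)}{4(p+m-1)^2}$; this consumes one quarter of the $|\na u^{\frac{p+m-1}{2}}|^2$-dissipation and contributes only an additive constant depending on $p$. For $p(\mu+\xi K)\io u^p$, I invoke Lemma \ref{lem:intup} (applicable because $m>2-\frac{4}{n}>1-\frac{2}{n}$) with an analogous choice of its free parameter so as to absorb another quarter of the gradient dissipation. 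If $\mu+\xi K=0$ the term vanishes outright and nothing need be done. Half of the original coefficient $\frac{\delta p(p-1)}{(p+m-1)^2}$ then remains on the left, which is exactly $\frac{\delta p(p-1)}{2(p+m-1)^2}$ as demanded by \eqref{eq:diffineq_prelim}.

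There is no genuine analytic obstacle; the difficulty is purely an arithmetic bookkeeping exercise of partitioning the $|\na u^{\frac{p+m-1}{2}}|^2$-coefficient so that both Young-type absorptions and the gradient term in \eqref{eq:diffineq_prelim} are accommodated simultaneously. All the substantial work---the control of $\io u^p v$ via Gagliardo--Nirenberg together with the bound for $v$ from Lemma \ref{lplqestimates}(i), and the control of $\io u^p$ via the mass bound of Lemma \ref{lem:massbd}---has already been carried out in Lemmas \ref{l5} and \ref{lem:intup}, respectively.
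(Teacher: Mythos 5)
Your proposal is correct and follows exactly the paper's argument: multiply Lemma \ref{l4} by $p$ and Lemma \ref{l6} by $q$, add, drop the nonpositive term $-\mu p\io u^{p+1}$, and absorb $\io u^p v$ and $\io u^p$ via Lemmas \ref{l5} and \ref{lem:intup} with $\eta$ chosen so that each consumes one quarter of the $\io|\na u^{\frac{p+m-1}{2}}|^2$-dissipation. The choices of $\eta$ you describe coincide with the paper's $\eta=\frac{\delta(p-1)}{4(p+m-1)^2}$ and $\eta=\frac{\delta(p-1)}{4(p+m-1)^2(\xi K+\mu)}$.
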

\begin{proof}
We employ Lemma \ref{l5} and Lemma \ref{lem:intup} with $\eta=\frac{\delta (p-1)}{4(p+m-1)^2}$ or $\eta=\frac{\delta(p-1)}{4(p+m-1)^2(\xi K+\mu)}$, respectively,  as well as Lemma \ref{l4} and Lemma \ref{l6} and add the resulting inequalities. 
\end{proof}

In a similar manner as in \cite{Tao-Winkler-jde}, we deal with the two terms on the right hand side of \eqref{eq:diffineq_prelim}. We will have a closer look at the conditions on $p,q$ in Lemma \ref{lem:parameterchoice} afterwards.

\begin{lem}\label{l8}
Let $\mu, \chi, \xi\ge 0$, $\delta>0$ and $m>2-\frac2n$.
Let $p,q>1$ be such that
$p> m -\frac{n-2}{nq}$ and
\begin{eqnarray}\label{eq:condonp_plarge}
 1 > \frac{q}{q-1}\frac{p-m+1}{p+m-1}\left((p+m-1)\frac{n-\frac{nq-n+2}{q(p-m+1)}}{(p+m-1)n+2-n}\right)
\end{eqnarray}

Then for any $\eta>0$, one can find a constant $C=C(\eta, p, q)>0$ such that for any $D$ fulfilling \eqref{D12} and \eqref{D:nondeg}, the solution to \systemref\ satisfies
\begin{eqnarray}\label{E1}
\io u^{p-m+1}|\nabla v|^2\leq \eta\io |\nabla u^{\frac{p+m-1}{2}}|^2+\eta\io |\nabla|\nabla v|^q|^2+C
\end{eqnarray}
for all $t\in(0, T_{\mathrm{max}})$.
\end{lem}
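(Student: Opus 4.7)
The strategy is to decouple the $u$- and $\nabla v$-factors by Hölder's inequality, apply the Gagliardo-Nirenberg inequality separately to each, and conclude with a multi-exponent Young inequality. The low-order norms needed in the two Gagliardo-Nirenberg estimates will come from the $L^1$ mass bound on $u$ (Lemma \ref{lem:massbd}) and the $L^s$ bound on $\nabla v$ for $s<\tfrac{n}{n-1}$ (Lemma \ref{lplqestimates}(ii)), respectively. This structure mirrors the argument used in \cite{Tao-Winkler-jde}.

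In detail, I would first apply Hölder's inequality with conjugate exponents $\tfrac{q}{q-1}$ and $q$:
\[
\io u^{p-m+1}|\nabla v|^2 \leq \Bigl(\io u^{\frac{(p-m+1)q}{q-1}}\Bigr)^{\frac{q-1}{q}}\cdot\Bigl(\io |\nabla v|^{2q}\Bigr)^{\frac{1}{q}} = \norm[L^A(\Om)]{u^{(p+m-1)/2}}^{\frac{2(p-m+1)}{p+m-1}}\cdot\norm[L^2(\Om)]{|\nabla v|^q}^{\frac{2}{q}},
\]
where $A:=\tfrac{2(p-m+1)q}{(q-1)(p+m-1)}$. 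Then I would invoke Gagliardo-Nirenberg separately on each factor: for the first, interpolating $u^{(p+m-1)/2}$ between $\norm[L^2(\Om)]{\nabla u^{(p+m-1)/2}}$ and the low-order norm $\norm[L^{2/(p+m-1)}(\Om)]{u^{(p+m-1)/2}}=\norm[L^1(\Om)]{u}^{(p+m-1)/2}$ (uniformly bounded by Lemma \ref{lem:massbd}); for the second, interpolating $|\nabla v|^q$ between $\norm[L^2(\Om)]{\nabla|\nabla v|^q}$ and $\norm[L^{s/q}(\Om)]{|\nabla v|^q}=\norm[L^s(\Om)]{\nabla v}^q$ for some fixed $s\in[1,\tfrac{n}{n-1})$ (uniformly bounded by Lemma \ref{lplqestimates}(ii)). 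The hypothesis $p>m-\tfrac{n-2}{nq}$ enters here to guarantee positivity of the relevant Gagliardo-Nirenberg interpolation exponents. This yields
\[
\io u^{p-m+1}|\nabla v|^2 \leq C_1\Bigl(\norm[L^2(\Om)]{\nabla u^{(p+m-1)/2}}^{\beta_1}+1\Bigr)\Bigl(\norm[L^2(\Om)]{\nabla|\nabla v|^q}^{\beta_2}+1\Bigr),
\]
with $\beta_1,\beta_2$ read off from \eqref{eq:GNdefa}.

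Expanding this product (using also the elementary inequality from Section \ref{s2}) and applying a three-exponent Young inequality -- employing $1$ as a third slack factor -- absorbs every resulting term into $\eta\io|\nabla u^{(p+m-1)/2}|^2+\eta\io|\nabla|\nabla v|^q|^2+C$, provided a Young-compatibility condition on the combined exponents holds. The main obstacle, and where condition \eqref{eq:condonp_plarge} enters, is the algebraic verification that this Young-compatibility threshold, after substituting the explicit formulas for $\beta_1,\beta_2$ furnished by \eqref{eq:GNdefa} and choosing the auxiliary exponent $s$ optimally in $[1,\tfrac{n}{n-1})$, coincides precisely with the numerical bound \eqref{eq:condonp_plarge}. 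The verification is tedious but direct; thus the hypothesis \eqref{eq:condonp_plarge} is exactly what makes Young's inequality close the estimate to the desired form.
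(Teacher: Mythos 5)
Your overall architecture (H\"older, then Gagliardo--Nirenberg on each factor, then Young) is the same as the paper's, but your choice of H\"older exponents $\tfrac{q}{q-1}$ and $q$ creates a genuine gap. After your split, the $u$-factor is $\norm[L^A(\Om)]{u^{(p+m-1)/2}}^{2(p-m+1)/(p+m-1)}$ with $A=\tfrac{2q(p-m+1)}{(q-1)(p+m-1)}$, and the Gagliardo--Nirenberg step you invoke requires \eqref{eq:GNcondition}, i.e.\ $A\le \tfrac{2n}{n-2}$ when $n\ge 3$; for $A$ above the Sobolev-critical exponent no estimate of $\norm[L^A]{\phii}$ by $\norm[L^2(\Om)]{\na\phii}$ plus low-order norms can hold. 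This requirement is not implied by the hypotheses of the lemma: for $n=3$, $m=2$, $q=1.1$, $p=1.8$ one checks $p>m-\tfrac{n-2}{nq}\approx 1.70$ and \eqref{eq:condonp_plarge} (which simplifies to $p<(2q-1)(m-1)+\tfrac{2q}{n}\approx 1.93$), yet $A=\tfrac{2\cdot 1.1\cdot 0.8}{0.1\cdot 2.8}\approx 6.29>6=\tfrac{2n}{n-2}$, so the interpolation exponent $a_1$ would exceed $1$. The source of the trouble is that your split sends the $|\na v|$-factor only to $L^{2q}$, far below the critical integrability $\tfrac{2nq}{n-2}$ that $\na|\na v|^q\in L^2$ can support, and correspondingly overloads the $u$-factor when $q$ is close to $1$. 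The paper instead uses the exponent pair $\tfrac{nq}{nq-n+2-\gamma}$, $\tfrac{nq}{n-2+\gamma}$, which places $|\na v|^q$ essentially at its critical exponent --- so only the Sobolev embedding plus Poincar\'e is needed there --- and keeps the $u$-exponent subcritical for \emph{all} admissible $p,q$ (this is exactly where $p>m-\tfrac{n-2}{nq}$ and $q\ge1$ enter).

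Even in the regime where your interpolation is admissible, your assertion that the Young-compatibility threshold ``coincides precisely'' with \eqref{eq:condonp_plarge} is false. Writing out your exponents, Young's inequality requires $\frac{\frac{p-m+1}2-\frac{q-1}{2q}}{\frac{p+m-1}2+\frac1n-\frac12}+\frac1q\cdot\frac{2q(n-1)-n}{2q(n-1)+2-n}<1$ (in the limit $s\nearrow\tfrac{n}{n-1}$), whereas \eqref{eq:condonp_plarge} amounts to $\frac{\frac{p-m+1}2-\frac{nq-n+2}{2nq}}{\frac{p+m-1}2+\frac1n-\frac12}+\frac1q<1$; these two thresholds differ unless $n(p+m-1)=2q(n-1)$. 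One can in fact show that your condition follows from \eqref{eq:condonp_plarge} \emph{together with} the standing hypothesis $m>2-\tfrac2n$, but that is an additional nontrivial computation, not the claimed identity, and your proof as written does not supply it.
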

\begin{proof}
Because $p\geq m-\frac{n-2}{nq}$, we have $nqp\geq mnq-n+2$ and thus $nq-n+2\leq nq+npq-nqm=nq(p-m+1)$, which results in $0\leq \frac2{p+m-1}\leq \frac{2nq(p-m+1)}{(p+m-1)(nq-n+2)}$. Furthermore, $q\geq 1$ so that $n-2-2q\leq 0$ and hence $-2pq+p(n-2)\leq 0\leq nq(m-1)+(q-1)(n-2)(m-1)$, which results in
$q(n-2)(p-m+1)=pqn-2pq-q(n-2)(m-1) \leq pqn-p(n-2)+nq(m-1)-(n-2)(m-1)=(nq-n+2)(p+m-1)$ and thus $\frac12-\frac1n=\frac{n-2}{2n}\leq \frac{(nq-n+2)(p+m-1)}{2nq(p-m+1)}$.

All in all, by these estimates and \eqref{eq:condonp_plarge}, it is possible to choose $\gamma>0$ in such a way that
\begin{equation}\label{eq:choosegamma_1}
 \frac{p-m+1}{p+m-1}\left((p+m-1)\frac{n-\frac{nq-n+2-\gamma}{q(p-m+1)}}{(p+m-1)n+2-n}\right)<\frac{q-1}q.
\end{equation}
and
\begin{equation}\label{eq:choosegamma_2}
 \frac12-\frac1n\leq \frac{(nq-n+2-\gamma)(p+m-1)}{2nq(p-m+1)} \quad \mbox{and} \quad nq-n+2-\gamma\leq nq(p-m+1).
\end{equation}

Our main purpose in introducing $\gamma$ is to avoid $n-2$ in the denominator in the $2$-dimensional case. For $n\in\set{3,4}$ also using $\gamma=0$ would be possible. Invoking H\"{o}lder's inequality with the exponents $\frac{nq}{nq-n+2-\gamma}$ and $\frac{nq}{n-2+\gamma}$ we find that
\begin{eqnarray*}
\io u^{p-m+1}|\nabla v|^2\leq \left(\io u^{\frac{nq(p-m+1)}{nq-n+2-\gamma}}\right)^{\frac{nq-n+2-\gamma}{nq}}\left(\io |\nabla v|^{\frac{2nq}{n-2+\gamma}}\right)^{\frac{n-2+\gamma}{nq}}\qquad \mbox{ on }(0,\Tmax).
\end{eqnarray*}
From the Sobolev embedding $W^{1, 2}(\Omega)\hookrightarrow L^{\frac{2n}{n-2+\gamma}}(\Omega)$ and Lemma \ref{lem:poincare} we obtain $c_1>0$ and $c_2>0$, respectively, such that for arbitrary $s_0\in[1,\frac{n}{n-1})$ we have
\begin{align*}
\left(\io |\nabla v|^{\frac{2nq}{n-2+\gamma}}\right)^{\frac{n-2+\gamma}{nq}} =& \||\nabla v|^q\|_{L^{\frac{2n}{n-2+\gamma}}(\Omega)}^{\frac{2}{q}}
\leq c_1\||\nabla v|^q\|_{W^{1, 2}(\Omega)}^{\frac{2}{q}} \\
\leq& c_2\left(\|\nabla|\nabla v|^q\|_{L^{2}(\Omega)}^{\frac{2}{q}}+\||\nabla v|^q\|_{L^{\frac{s_0}{q}}(\Omega)}^{\frac{2}{q}}\right)
\leq c_2 \norm[L^2(\Om)]{\na |\na v|^q}^{\frac2q}+c_3
\end{align*}
on $(0,\Tmax)$, where $\sqrt{\frac{c_3}{c_2}}$ is the contant given by Lemma \ref{lplqestimates} (ii) corresponding to $s_0$.

Thanks to the first inequality in \eqref{eq:choosegamma_2}, the Gagliardo-Nirenberg inequality therupon provides us with $c_4>0$ such that
\begin{eqnarray}\label{eq:351}
\left(\io u^{\frac{nq(p-m+1)}{nq-n+2-\gamma}}\right)^{\frac{nq-n+2-\gamma}{nq}}
&=&||u^{\frac{p+m-1}{2}}||_{L^{\frac{2nq(p-m+1)}{(p+m-1)(nq-n+2-\gamma)}}(\Omega)}^{\frac{2(p-m+1)}{p+m-1}}\nonumber\\
&\leq&c_4 \bigg(||\nabla u^{\frac{p+m-1}{2}}||_{L^2(\Omega)}^{\frac{2a(p-m+1)}{p+m-1}}||u^{\frac{p+m-1}{2}}||_{L^{\frac{2}{p+m-1}}(\Omega)}^{\frac{2(1-a)(p-m+1)}{p+m-1}}\nonumber\\
&& \qquad\qquad +||u^{\frac{p+m-1}{2}}||_{L^{\frac{2}{p+m-1}}(\Omega)}^{\frac{2(p-m+1)}{p+m-1}}\bigg),
\end{eqnarray}
on $(0,\Tmax)$, where
\begin{eqnarray*}
a=\frac{\frac{p+m-1}{2}-\frac{(p+m-1)(nq-n+2-\gamma)}{2nq(p-m+1)}}{\frac{p+m-1}{2}+\frac1n-\frac12}.
\end{eqnarray*}

Thanks to the boundedness of $\io u$ by Lemma \ref{lem:massbd}, we may continue \eqref{eq:351} to estimate
\begin{equation*}
\left(\io u^{\frac{nq(p-m+1)}{nq-n+2-\gamma}}\right)^{\frac{nq-n+2-\gamma}{nq}}\leq c_4||\nabla u^{\frac{p+m-1}{2}}||_{L^2(\Omega)}^{\frac{2a(p-m+1)}{p+m-1}}+c_5\qquad \mbox{ on }(0,\Tmax)
\end{equation*}
with appropriate $c_5>0$ obtained from Lemma \ref{lem:massbd}.

Given $\eta>0$, Young's inequality gives $c_6>0$ such that, in conclusion, 
\begin{align}
 \io& u^{p-m+1} |\na v|^2 \leq c_2c_4 \left(\io |\na|\na v|^q|^2\right)^{\frac1q} \left(\io |\na u^{\frac{p+m-1}2}|^2\right)^{a\frac{p-m+1}{p+m-1}} \nn\\
 &+ c_3c_4 \left(\io |\na u^{\frac{p+m-1}2}|^2\right)^{a\frac{p-m+1}{p+m-1}}
+ c_2c_5 \left(\io |\na|\na v|^q|^2\right)^{\frac1q}
+c_3c_5\nn\\
\leq& c_2c_4 \left(\io |\na|\na v|^q|^2\right)^{\frac1q} \left(\io |\na u^{\frac{p+m-1}2}|^2\right)^{a\frac{p-m+1}{p+m-1}}\nn\\
&+\frac{\eta}2 \io |\na|\na v|^q|^2 +\frac\eta2 \io |\na u^{\frac{p+m-1}2}|^2
+c_6\label{eq:upm1nav2est}\qquad \mbox{ on }(0,\Tmax).
\end{align}
Furthermore, by \eqref{eq:choosegamma_1}, we also have
\begin{eqnarray*}
\frac{a(p-m+1)}{p+m-1}<\frac{q-1}{q}.
\end{eqnarray*}
Hence in the first term on the right hand side of \eqref{eq:upm1nav2est} we can employ Young's inequality to obtain $c_7>0$ such that 
\begin{align*}
c_2c_4 \left(\io |\na|\na v|^q|^2\right)^{\frac1q} \left(\io |\na u^{\frac{p+m-1}2}|^2\right)^{a\frac{p-m+1}{p+m-1}}
&\leq\frac{\eta}2\io |\na|\na v|^q|^2+\frac{\eta}2 \io |\na u^{\frac{p+m-1}2}|^2\nn+c_7
\end{align*}
to finally transform \eqref{eq:upm1nav2est} into \eqref{E1}.
\end{proof}

We can treat the second term on the right-hand side of \eqref{eq:diffineq_prelim} similarly as the first one in Lemma \ref{l8}:

\begin{lem}\label{l9}
Let $\mu, \chi, \xi\ge 0$, $\delta>0$ and $m>2-\frac2n$.
Let $q>1$ and $p>\max\{1, \frac{2q(n-2)}{2q+n-2}-m+1\}$ be such that
\begin{equation}\label{eq:pqcond}
 \frac1q > \frac2{p+m-1} \left(\frac{\frac{p+m-1}2 - \frac{(p+m-1)(2q+n-2)}{4nq}}{\frac{p+m-1}2+\frac1n-\frac12} \right).
\end{equation}
Then for any $\eta>0$ there exists a constant $C>0$ such that for any $D$ with \eqref{D12} and \eqref{D:nondeg} the solution to \systemref\ fulfils
\begin{eqnarray}\label{E2}
\io u^2|\nabla v|^{2q-2}\leq\eta\io |\nabla u^{\frac{p+m-1}{2}}|^2+\eta\io |\nabla|\nabla v|^q|^2+C
\end{eqnarray}
for all $t\in(0, T_{\mathrm{max}})$.
\end{lem}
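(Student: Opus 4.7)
The plan is to mimic the strategy of Lemma \ref{l8}, adapted to the new integrand $u^2|\na v|^{2q-2}$. First, I would apply H\"older's inequality so as to split this integral into a product of a power of an $L^\alpha$-norm of $u$ and a power of an $L^{\alpha'}$-norm of $|\na v|^{2q-2}$, with the exponents tuned precisely so that (a) the $v$-factor reduces to a norm of $|\na v|^q$ in the Sobolev-critical space $L^{2n/(n-2)}(\Om)$ and (b) the $u$-factor becomes a power of the $L^{r_0}$-norm of $u^{(p+m-1)/2}$ for some $r_0$ within the range of Gagliardo--Nirenberg with $L^2$-gradient control. The natural choice is the conjugate pair $\alpha=\frac{nq}{2q+n-2}$ and $\alpha'=\frac{nq}{(q-1)(n-2)}$, which for $n\in\{3,4\}$ gives
\[
\io u^2|\na v|^{2q-2} \leq \left(\io u^{\frac{2nq}{2q+n-2}}\right)^{\frac{2q+n-2}{nq}}\!\left(\io |\na v|^{\frac{2nq}{n-2}}\right)^{\frac{(q-1)(n-2)}{nq}}\!.
\]
For $n=2$ I would insert a small $\gamma>0$ and use $W^{1,2}\hookrightarrow L^{2n/(n-2+\gamma)}$ exactly as in the proof of Lemma \ref{l8}.

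Next I would estimate each factor. The $v$-factor equals $\||\na v|^q\|_{L^{2n/(n-2)}(\Om)}^{(2q-2)/q}$, so the Sobolev embedding combined with Lemma \ref{lem:poincare} and the boundedness of $\|\na v\|_{L^{s_0}(\Om)}$ supplied by Lemma \ref{lplqestimates}(ii) for some $s_0\in[1,\tfrac{n}{n-1})$ bounds it by $c(\|\na|\na v|^q\|_{L^2(\Om)}^{(2q-2)/q}+1)$. The $u$-factor equals $\|u^{(p+m-1)/2}\|_{L^{r_0}(\Om)}^{4/(p+m-1)}$ with $r_0=\frac{4nq}{(p+m-1)(2q+n-2)}$; the assumption $p>\frac{2q(n-2)}{2q+n-2}-m+1$ is exactly the condition $\frac{1}{r_0}\geq \frac{1}{2}-\frac{1}{n}$ required for Gagliardo--Nirenberg applicability, and that inequality together with Lemma \ref{lem:massbd} (which controls the low-order $L^{2/(p+m-1)}$-norm via $\io u$) yields the bound $c(\|\na u^{(p+m-1)/2}\|_{L^2(\Om)}^{4a/(p+m-1)}+1)$ with exactly the exponent $a$ appearing in \eqref{eq:pqcond}.

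Multiplying the two estimates and expanding produces four terms, of which the only non-trivial one is the cross term $\|\na u^{(p+m-1)/2}\|_{L^2}^{4a/(p+m-1)}\,\|\na|\na v|^q\|_{L^2}^{(2q-2)/q}$. I would apply Young's inequality with the conjugate pair $\frac{p+m-1}{2a}$ and $\frac{p+m-1}{p+m-1-2a}$, which absorbs the $u$-gradient factor into $\eta\io|\na u^{(p+m-1)/2}|^2$ and leaves a term of the form $C_\eta\|\na|\na v|^q\|_{L^2}^{\sigma}$ with $\sigma=\frac{2q-2}{q}\cdot\frac{p+m-1}{p+m-1-2a}$. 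A direct computation shows that $\sigma<2$ is equivalent to $\frac{2a}{p+m-1}<\frac{1}{q}$, which is precisely the hypothesis \eqref{eq:pqcond}; therefore a second application of Young absorbs this term into $\eta\io|\na|\na v|^q|^2+C$. The pure (non-cross) terms from the expansion are handled by the same two-step Young procedure. The main obstacle is purely bookkeeping of exponents: verifying that the Gagliardo--Nirenberg exponent emerging from the H\"older split coincides with the $a$ displayed in \eqref{eq:pqcond}, and that the threshold $p>\frac{2q(n-2)}{2q+n-2}-m+1$ matches the GN-applicability condition. No genuinely new analytic ingredient beyond those of Lemma \ref{l8} is needed.
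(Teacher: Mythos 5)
Your proposal is correct and follows essentially the same route as the paper's proof: the identical H\"older split with conjugate exponents $\tfrac{nq}{2q+n-2}$ and $\tfrac{nq}{(q-1)(n-2)}$ (the paper inserts the small $\gamma$ uniformly in $n$, but as it notes elsewhere this is only needed for $n=2$), the same Sobolev--Poincar\'e treatment of the $|\na v|^q$ factor, the same Gagliardo--Nirenberg estimate for the $u$ factor with \eqref{eq:pqcond} serving exactly as the Young-absorbability condition $\tfrac{2a}{p+m-1}<\tfrac1q$. Nothing further is needed.
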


\begin{proof}
Since $(n-2)(q-1)\geq 0$ implies $\frac{nq}{2q+n-2}\geq 1$, and due to \eqref{eq:pqcond}, we can find $\gamma>0$ such that $\frac{nq}{2q+n-2-\gamma(q-1)}>1$ and
\begin{equation}\label{eq:choosegamma2_1}
 \frac{2}{p+m-1}\left(\frac{\frac{p+m-1}2-\frac{(p+m-1)(2q+n-2-\gamma(q-1))}{4nq}}{\frac{p+m-1}2+\frac1n-\frac12}\right)< \frac1q.
\end{equation}
Using H\"{o}lder's inequality with exponents $\frac{nq}{2q+n-2-\gamma(q-1)}$ and $\frac{nq}{nq-2q-n+2+\gamma(q-1)}=\frac{nq}{(n-2+\gamma)(q-1)}$ yields
\begin{eqnarray*}
\io u^2|\nabla v|^{2q-2}\leq\left(\io u^{\frac{2nq}{2q+n-2-\gamma(q-1)}}\right)^{\frac{2q+n-2-\gamma(q-1)}{nq}}\left(\io |\nabla v|^{\frac{2nq}{n-2+\gamma}}\right)^{\frac{(q-1)(n-2+\gamma)}{nq}}\qquad \mbox{ on }(0,\Tmax),
\end{eqnarray*}
where we may again use Sobolev's and Poincar\'{e}'s inequalities to obtain
\begin{eqnarray*}
\left(\io |\nabla v|^{\frac{2nq}{n-2+\gamma}}\right)^{\frac{(q-1)(n-2+\gamma)}{nq}}=\||\nabla v|^q|\|_{L^{\frac{2n}{n-2+\gamma}}(\Omega)}^{\frac{2q-2}{q}}
&\leq& c_1\||\nabla v|^q\|_{W^{1, 2}(\Omega)}^{\frac{2q-2}{q}}\nonumber\\
&\leq& c_2\left(\|\nabla|\nabla v|^q\|_{L^2(\Omega)}^{\frac{2q-2}{q}}+\||\nabla v|^q\|_{L^{\frac{s_0}{q}}}^{\frac{2}{q}}\right)
\end{eqnarray*}
for all $t\in(0, \Tmax)$ with $s_0\in[1, \frac{n}{n-1})$.
We have $2nq\geq nq+nq\geq 2q+n\geq 2q+n-2$ so that $0\leq \frac2{p+m-1}\leq \frac{4nq}{(p+m-1)(2q+n-2)}$. Moreover, $\frac{2q(n-2)}{2q+n-2}-m+1 \leq p$ entails $\frac12=\frac1n+\frac{n-2}{2n} = \frac1n+\frac{\frac{2q(n-2)}{2q+n-2}(2q+n-2)}{4nq}\leq \frac1n+\frac{(p+m-1)(2q+n-2)}{4nq}$ and hence the Gagliardo-Nirenberg inequality yields $c_3>0$ with
\begin{eqnarray*}
\left(\io u^{\frac{2nq}{2q+n-2-\gamma(q-1)}}\right)^{\frac{2q+n-2-\gamma(q-1)}{nq}}
&=&\|u^{\frac{p+m-1}{2}}\|_{L^{\frac{4nq}{(p+m-1)(2q+n-2-\gamma(q-1))}}(\Omega)}^{\frac{4}{p+m-1}}\nonumber\\
&\leq&c_3\bigg(\|\nabla u^{\frac{p+m-1}{2}}\|_{L^2(\Omega)}^{\frac{4b}{p+m-1}}\|u^{\frac{p+m-1}{2}}\|_{L^{\frac{2}{p+m-1}}(\Omega)}^{\frac{4(1-b)}{p+m-1}}\nonumber\\
&&+\|u^{\frac{p+m-1}{2}}\|_{L^{\frac{2}{p+m-1}}(\Omega)}^{\frac{4}{p+m-1}}\bigg)\nonumber\\
&\leq& c_4\left(\|\nabla u^{\frac{p+m-1}{2}}\|_{L^2(\Omega)}^{\frac{4b}{p+m-1}}+1\right)\qquad \mbox{ on }(0,\Tmax),
\end{eqnarray*}
where we have used Lemma \ref{lem:massbd} to find $c_4>0$ and where
\begin{eqnarray*}
b=\frac{\frac{p+m-1}{2}-\frac{(p+m-1)(2q+n-2-\gamma(q-1))}{4nq}}{\frac{p+m-1}{2}+\frac1n-\frac12}.
\end{eqnarray*}
Here \eqref{eq:choosegamma2_1} ensures that
\(
\frac{2b}{p+m-1}<\frac{1}{q}
\)
and concluding as in the previous proof, by twofold application of Young's inequality (with exponents $\frac{p+m-1}{2b}$, $\frac{q}{q-1}$ in the first step) 
we obtain \eqref{E2}.
\end{proof}

The conditions in the previous lemmata involve some assumptions on $p$ and $q$ that are not immediately seen to be simultaneously satisfiable. With this lemma we ensure that they are. In its proof we rely on the fact that $m>2-\frac2n$.

\begin{lem}\label{lem:parameterchoice}
 Let $m>2-\frac2n$. 
 There exist unbounded sequences $(p_k)_{k\in\N}, (q_k)_{k\in\N}$ such that for each $k\in\N$
\begin{align}
 q_k>&1\qquad \mbox{ and }\qquad p_k>1 \label{pq:gtone}\\
 p_k>&\max\set{\frac{2q_k(n-2)}{2q_k+n-2}-m+1,m-\frac{n-2}{nq_k}}\label{pq:plarge_final}\\
 1 >& \frac{q_k}{q_k-1}\frac{p_k-m+1}{p_k+m-1}\left((p_k+m-1)\frac{n-\frac{nq_k-n+2}{q_k(p_k-m+1)}}{(p_k+m-1)n+2-n}\right) \label{pq:l8}\\
 \frac1q_k >& \frac2{p_k+m-1} \left(\frac{\frac{p_k+m-1}2 - \frac{(p_k+m-1)(2q_k+n-2)}{4nq_k}}{\frac{p_k+m-1}2+\frac1n-\frac12} \right) \label{pq:l9}.
\end{align}
and hence $p_k$ and $q_k$ fulfil all requirements of $p$ and $q$ from Lemmata \ref{l8} and \ref{l9}.
\end{lem}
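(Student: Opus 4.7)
The strategy is to simplify the two ``substantive'' conditions \eqref{pq:l8} and \eqref{pq:l9} to linear inequalities in $p$, and then to show that the resulting interval for $p$ is non-empty (for large $q$) precisely under the assumption $m>2-\frac2n$, with the remaining conditions \eqref{pq:gtone} and \eqref{pq:plarge_final} becoming automatic.

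First I would rewrite \eqref{pq:l9}. Clearing the denominators in the definition (note $\frac{p+m-1}{2}+\frac1n-\frac12=\frac{n(p+m-1)+2-n}{2n}$), one sees after some algebra that the right-hand side simplifies to $\frac{2q(n-1)-(n-2)}{q(n(p+m-1)+2-n)}$, so that \eqref{pq:l9} is equivalent to
\[
 p+m-1 \;>\; \frac{2q(n-1)}{n}.
\]
Next I would treat \eqref{pq:l8}. Cross-multiplying and grouping the terms involving $p$ on one side, one finds after simplification that \eqref{pq:l8} reduces to
\[
 p+m-1 \;<\; 2q\Bigl(m-1+\tfrac1n\Bigr).
\]
Both manipulations are routine; the only care needed is with the signs, which come out correctly because $m>2-\frac2n$ forces $m-1+\frac1n>0$.

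Having reduced the problem, the task becomes finding unbounded $(p_k,q_k)$ with
\[
 \frac{2q(n-1)}{n} \;<\; p+m-1 \;<\; 2q\Bigl(m-1+\tfrac1n\Bigr).
\]
This interval is non-empty iff $\frac{n-1}{n}<m-1+\frac1n$, i.e.\ iff $m>2-\frac2n$, which is exactly our hypothesis. Thus for any $q>1$ (in particular for an unbounded sequence $q_k\to\infty$) I can pick, say,
\[
 p_k + m - 1 \;=\; \tfrac12\!\left(\tfrac{2q_k(n-1)}{n} + 2q_k\!\left(m-1+\tfrac1n\right)\right),
\]
and this $p_k$ also tends to $\infty$ since $m-1+\frac1n>\frac{n-1}{n}$.

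Finally I would verify that the remaining conditions \eqref{pq:gtone} and \eqref{pq:plarge_final} are automatic in this regime. Since $p_k+m-1>\frac{2q_k(n-1)}{n}\to\infty$, clearly $p_k>1$ for large $k$, settling \eqref{pq:gtone}. For \eqref{pq:plarge_final}, the two competitors are $\frac{2q(n-2)}{2q+n-2}-m+1<n-1-m$ (bounded in $q$) and $m-\frac{n-2}{nq}<m$ (also bounded in $q$), whereas $p_k\to\infty$; so both are eventually dominated. No step is delicate here — the only real ``obstacle'' is the bookkeeping in reducing \eqref{pq:l8} and \eqref{pq:l9} to the two linear bounds above, which is exactly the place where the threshold $m>2-\frac2n$ surfaces naturally.
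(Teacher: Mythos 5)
Your proposal is correct and follows essentially the same route as the paper: both arguments reduce \eqref{pq:l9} to the lower bound $p+m-1>\frac{2q(n-1)}{n}$ and \eqref{pq:l8} to the upper bound $p+m-1<2q(m-1+\frac1n)$ (the paper's conditions \eqref{pq:plarge} and \eqref{pq:psmall}), observe that this window is non-empty for large $q$ exactly because $m-2+\frac2n>0$, and note that \eqref{pq:gtone} and \eqref{pq:plarge_final} are automatic since $p_k\to\infty$ while the competing expressions stay bounded. The only cosmetic difference is that you state the reductions as equivalences and pick the midpoint explicitly, whereas the paper only verifies the sufficient implications.
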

\begin{proof}
In a first step we will show that it is possible to choose $p_k$, $q_k$ in such a way that the conditions
\begin{align}
p_k>&\max\set{\frac{2q_k(n-2)}{2q_k+n-2}-m+1, 2q_k\frac{n-1}n-m+1,m-\frac{n-2}{nq_k}}\label{pq:plarge}\\
 p_k<&(2q_k-1)(m-1)+\frac{2q_k}n\label{pq:psmall}
\end{align}
are satisfied, which evidently already imply \eqref{pq:plarge_final}, but, in a second step, can also be used to derive \eqref{pq:l8} and \eqref{pq:l9} as well. 

There is $Q_0>0$ such that for every $q>Q_0$
\[
 \max\set{\frac{2q(n-2)}{2q+n-2}-m+1, \frac{2q(n-1)}n-m+1,m-\frac{n-2}{nq}} = 2q\frac{n-1}n-m+1
\]
as can be seen by considering the limit of each of the expressions as $q\to\infty$.
By the condition on $m$ 
 we know that $m+\frac2n-2>0$ and hence it is possible to find $Q_1>0$ such that for every $q>Q_1$ we have
\[
 2q\left(m+\frac2n-2\right)>m-1
\]
and hence
\[
 2qm-m-2q+1+\frac{2q}n>-\frac{2q}n+2q, \mbox{i.e. }\quad (2q-1)(m-1)+\frac{2q}n>\frac{2q(n-1)}n,
\]
so that it becomes possible to choose unbounded sequences fulfilling \eqref{pq:gtone}, \eqref{pq:plarge} and \eqref{pq:psmall} simultaneously. This is already sufficient for them to satisfy \eqref{pq:l8} and \eqref{pq:l9} also. Indeed, \eqref{pq:psmall} ensures that
\[
 np_k<(2q_k-1)(m-1)n+2q_k=2q_k+2nmq_k-2nq_k-mn+n
\]
and hence
\begin{align*}
 &npq_k+nmq_k-2nq_k+2q_k-np_k-mn+2n-2\\>& np_kq_k+nmq_k-2nq_k+2q_k-(2q_k+2nmq_k-2nq_k-mn+n)-mn+2n-2\\
 >& np_kq_k-nmq_k+n-2,
\end{align*}
so that
\begin{align*}
 1>&\frac{np_kq_k-nmq_k+n-2}{np_kq_k+nmq_k-2nq_k+2q_k-np_k-mn+2n-2} = \frac{nq_k(p_k-m+1)-(nq_k-n+2)}{(q_k-1)(np_k+nm-2n+2)}\\
 =&\frac{q_k}{q_k-1}\frac{p_k-m+1}{p_k+m-1}\left((p_k+m-1)\frac{n-\frac{nq_k-n+2}{q_k(p_k-m+1)}}{(p_k+m-1)n+2-n}\right).
\end{align*}
Moreover, \eqref{pq:plarge} entails
\[
 2q_k\frac{n-1}n+1-m<p_k,\quad \mbox{ i.e. } \quad  2q_k(n-1)+n-mn<np_k
\]
and thus
\[
 2nq_k-2q_k-n+2<np_k+mn-2n+2,
\]
which shows that
\begin{align*}
 1>&\frac{2nq_k-2q_k-n+2}{np_k+mn-n+2-n}=\frac{q_k-\frac{2q_k+n-2}{2n}}{\frac{p_k+m-1}2+\frac1n-\frac12}
 =\frac{2q_k}{p_k+m-1}\left(\frac{\frac{p_k+m-1}2-\frac{(p_k+m-1)(2q_k+n-2)}{4nq_k}}{\frac{p_k+m-1}2+\frac1n-\frac12}\right).
\end{align*}
\end{proof}

Having made sure that there are (arbitrarily large) exponents, for which the lemmata before can be applied, we now use them in order to obtain the following boundedness result.

\begin{lem}\label{l10}
Let $\mu, \chi, \xi\ge 0$, $\delta>0$ and $m>2-\frac2n$. Let $p, q\in(1, \infty)$. Then one can find a constant $C>0$ depending on $p, q$ such that for any function $D$ satisfying \eqref{D12} and \eqref{D:nondeg}, we have
\begin{eqnarray}\label{345}
\io u^p(\cdot, t)+\io |\nabla v|^{2q}(\cdot, t)\leq C
\end{eqnarray}
for all $t\in(0, T_{\mathrm{max}})$.
\end{lem}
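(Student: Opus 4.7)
The strategy is to turn the collection of estimates already assembled in Corollary \ref{c7} and Lemmata \ref{l8}, \ref{l9}, \ref{lem:intup} into a closed autonomous ODI for the quantity
\[
  y(t):=\io u^{p}(\cdot,t)+\io |\nabla v|^{2q}(\cdot,t).
\]
First, a reduction step. Given arbitrary $p,q>1$, Lemma \ref{lem:parameterchoice} produces $p_k,q_k$ exceeding $p$ and $q$ respectively and satisfying \eqref{pq:gtone}--\eqref{pq:l9}. Because $\Om$ is bounded, H\"older's inequality gives $\io u^p\le |\Om|^{1-p/p_k}(\io u^{p_k})^{p/p_k}$ and $\io |\na v|^{2q}\le |\Om|^{1-q/q_k}(\io |\na v|^{2q_k})^{q/q_k}$, so it suffices to prove \eqref{345} for a pair $(p,q)$ satisfying the hypotheses of Lemmata \ref{l8} and \ref{l9}.

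For such $(p,q)$, start from Corollary \ref{c7}, which yields
\begin{align*}
 \ddt y + \tfrac{\delta p(p-1)}{2(p+m-1)^2}\io|\na u^{\frac{p+m-1}2}|^2 + 2q\io |\na v|^{2q}+\tfrac{q-1}{q}\io |\na |\na v|^q|^2 \\ \le C\io u^{p-m+1}|\na v|^2+C\io u^2 |\na v|^{2q-2}+C
\end{align*}
on $(0,\Tmax)$, after discarding the nonnegative term involving $D(u)u^{p-2}|\na u|^2$. Next, apply Lemma \ref{l8} and Lemma \ref{l9} to the two cross terms on the right with $\eta$ chosen so small (say $\eta$ equal to a quarter of the smallest of the coefficients in front of the gradient integrals on the left) that those cross terms are absorbed by the gradient contributions on the left. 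The resulting inequality takes the form
\[
 \ddt y + c_1 \io |\na u^{\frac{p+m-1}2}|^2 + c_2 \io |\na v|^{2q} + c_3\io |\na|\na v|^q|^2 \le C'
\]
with positive constants $c_1,c_2,c_3,C'$.

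The last step is to convert this dissipative estimate into an ODI of the form $y'+y\le C''$. For the $\io u^p$ part of $y$, Lemma \ref{lem:intup} with $\eta=c_1/2$ gives $\io u^p \le \tfrac{c_1}{2}\io |\na u^{\frac{p+m-1}2}|^2 + C''$, so $\io u^p$ is controlled by the first gradient integral (up to a constant). For the $\io |\na v|^{2q}$ part it is controlled directly by the $c_2\io|\na v|^{2q}$ term on the left. Combining, we obtain $\ddt y+\kappa y\le \tilde C$ on $(0,\Tmax)$ for some $\kappa,\tilde C>0$, and a standard ODE comparison yields $y(t)\le\max\{y(0),\tilde C/\kappa\}$ uniformly in $t\in(0,\Tmax)$, giving \eqref{345}.

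The main technical obstacle is not the ODE step but the bookkeeping: making sure the various $\eta$'s in Lemmata \ref{l8}, \ref{l9} and \ref{lem:intup} are small enough to be simultaneously absorbed by the three finite dissipation terms coming from Corollary \ref{c7}. Once Lemma \ref{lem:parameterchoice} has provided admissible exponents, the rest is a routine Young-type absorption followed by comparison.
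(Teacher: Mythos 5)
Your proposal is correct and follows essentially the same route as the paper: enlarge $p,q$ via Lemma \ref{lem:parameterchoice} and H\"older's inequality, combine Corollary \ref{c7} with Lemmata \ref{l8} and \ref{l9} (with $\eta$ small enough for absorption), use Lemma \ref{lem:intup} to dominate $\io u^p$ by the gradient term, and close with an ODE comparison. No gaps.
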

\begin{proof}
Observing $L^r(\Omega)\hookrightarrow L^s(\Omega)$ for $r>s$, we may without loss of generality enlarge $p$ and $q$ and do so in a manner that all conditions on $p$ and $q$ listed in Lemma \ref{l8} and \ref{l9} are satisfied, which is possible due to Lemma \ref{lem:parameterchoice}. 
An application of Corollary \ref{c7} provides us with constants $c_1, c_2, c_3, c_4>0$ such that 
\begin{align*}
 \ddt &\left(\io u^p + \io |\na v|^{2q}\right)+c_1 \io |\na u^{\frac{p+m-1}{2}}|^2 +c_2 \io |\na v|^{2q} + c_3\io |\na |\na v|^q|^2 \\
 &\leq c_4\io u^{p-m+1}|\na v|^2 + c_4\io u^2|\na v|^{2q-2} + c_4
\end{align*}
holds on $(0,\Tmax)$. Invoking Lemma \ref{l8} and Lemma \ref{l9} upon the choice of $\eta=\min\set{\frac{c_1}{4c_4},\frac{c_3}{2c_4}}$ we find $c_5>0$ such that 
\begin{equation}\label{eq:328}
 \ddt \left(\io u^p + \io |\na v|^{2q}\right)+\frac{c_1}2 \io |\na u^{\frac{p+m-1}{2}}|^2 +c_2 \io |\na v|^{2q} \leq c_5
\end{equation}
on $(0,\Tmax)$. Moreover, as a consequence of Lemma \ref{lem:intup} there is $c_6>0$ fulfilling 
\begin{equation}\label{eq:329}
 c_2\io u^p \leq \frac{c_1}2\io |\na u^{\frac{p+m-1}2}|^2  + c_6 
\end{equation}
and combining \eqref{eq:328} and \eqref{eq:329} shows that 
\[
 \ddt\left(\io u^p+\io |\na v|^{2q}\right) + c_2 \left(\io u^p + \io |\na v|^{2q} \right) \leq c_5+c_6
\]
for all $t\in (0,\Tmax)$. If we define $y(t):=\io u^p(\cdot,t)+\io |\na v(\cdot,t)|^{2q}$, $t\in (0,\Tmax)$, and $c_7:=c_5+c_6$, the inequality reads 
\[
 y'(t) + c_2 y(t) \leq c_7 \qquad \mbox{for all }t\in (0,\Tmax).
\]
Upon an ODE comparison, this entails boundedness of $y$ and hence \eqref{345}.
\end{proof}

A direct consequence is the following assertion on boundedness:
\begin{cor}\label{cor:unavbd}
 Let $\mu, \chi, \xi\ge 0$, $\delta>0$ and $m>2-\frac2n$.
 For any $p\in[1,\infty)$ there is $C>0$ such that for any function $D$ obeying \eqref{D12} and \eqref{D:nondeg}, the solution to \systemref\ fulfills
\[
 \norm[L^p(\Om)]{u(\cdot,t)}\leq C \qquad \mbox{and}\qquad \norm[L^p]{\na v(\cdot,t)}\leq C
\]
for any $t\in(0,\Tmax)$.
\end{cor}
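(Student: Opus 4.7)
The plan is to reduce the statement directly to Lemma \ref{l10} by choosing the parameters appropriately and, when necessary, invoking the elementary inclusion $L^r(\Omega)\hookrightarrow L^s(\Omega)$ for $r\geq s$ on the bounded domain $\Omega$.

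For the bound on $u$, given $p\in[1,\infty)$, I would pick some $\tilde p\in(1,\infty)$ with $\tilde p\geq p$ (for instance $\tilde p:=\max\{p,2\}$) and some fixed $q>1$, then apply Lemma \ref{l10} with the pair $(\tilde p,q)$ to obtain a constant $C_1>0$ such that
\[
\io u^{\tilde p}(\cdot,t)\leq C_1 \qquad \text{for all } t\in(0,\Tmax).
\]
Since $\Omega$ is bounded, H\"older's inequality yields $\norm[L^p(\Om)]{u(\cdot,t)}\leq|\Om|^{\frac1p-\frac1{\tilde p}}\norm[L^{\tilde p}(\Om)]{u(\cdot,t)}\leq C$ for all $t\in(0,\Tmax)$, as required.

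For the bound on $\nabla v$, given $p\in[1,\infty)$, I would set $\tilde q:=\max\{p/2,2\}>1$ and apply Lemma \ref{l10} with, say, the pair $(2,\tilde q)$ (any $\tilde p>1$ works since we only need the gradient estimate here). This yields $C_2>0$ such that
\[
\io |\nabla v(\cdot,t)|^{2\tilde q}\leq C_2 \qquad \text{for all } t\in(0,\Tmax),
\]
so that $\norm[L^{2\tilde q}(\Om)]{\nabla v(\cdot,t)}\leq C_2^{1/(2\tilde q)}$. Since $2\tilde q\geq p$, the embedding $L^{2\tilde q}(\Om)\hookrightarrow L^p(\Om)$ gives the desired uniform bound on $\norm[L^p(\Om)]{\nabla v(\cdot,t)}$.

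There is no real obstacle here: the statement is essentially a repackaging of Lemma \ref{l10} that extends its validity from $p,q\in(1,\infty)$ to $p\in[1,\infty)$ and separates the two norms. All the substantive work has already been done in the differential-inequality argument leading to Lemma \ref{l10}.
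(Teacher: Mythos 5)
Your argument is correct and is exactly what the paper intends: the corollary is stated as "a direct consequence" of Lemma \ref{l10}, and the only content is choosing exponents $\tilde p\geq p$, $2\tilde q\geq p$ in that lemma and then using $L^r(\Om)\hookrightarrow L^s(\Om)$ for $r\geq s$ on the bounded domain. No gaps.
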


Since $p=\infty$ is not covered by this corollary, we set out to improve the norms which can be controlled.
In the case of $v$ nothing more than a short application of $L^p-L^q$-estimates is needed:

\begin{lem}\label{lem:vbd}
 Let $\mu, \chi, \xi\ge 0$, $\delta>0$ and $m>2-\frac2n$.
 There is $C>0$ such that for any function $D$ obeying \eqref{D12} and \eqref{D:nondeg}, the solution to \systemref\ satisfies
\[
 \norm[W^{1,\infty}(\Om)]{v(\cdot,t)}\leq C
\]
for all $t\in [0,\Tmax)$.
\end{lem}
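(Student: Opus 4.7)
The plan is to reduce this to a direct application of Lemma~\ref{lplqestimates}, parts (iii) and (iv), using the $L^p$-bounds on $u$ that Corollary~\ref{cor:unavbd} now provides for arbitrarily large $p$.

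First I would fix some exponent $p>n$. Since $n\le 4$ and Corollary~\ref{cor:unavbd} yields a time-independent bound on $\norm[L^p(\Om)]{u(\cdot,t)}$ for every $p\in[1,\infty)$, we obtain $M>0$ (depending only on $p$ and the data) such that $\norm[L^p(\Om)]{u(\cdot,t)}\le M$ for all $t\in(0,\Tmax)$. Lemma~\ref{lplqestimates}(iv), applied to this $p$, then produces a constant $c>0$ such that
\[
\norm[L^\infty(\Om)]{\nabla v(\cdot,t)}\le c\qquad\text{for all }t\in(0,\Tmax).
\]
For the $L^\infty$-bound on $v$ itself, the same $p$ satisfies $p>\tfrac{n}{2}$ as well, so Lemma~\ref{lplqestimates}(iii) yields a similar time-independent bound on $\norm[L^\infty(\Om)]{v(\cdot,t)}$. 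Summing both gives the desired $W^{1,\infty}$-bound, with a constant $C$ independent of $t$ and of the particular choice of $D$ (as long as $D$ satisfies \eqref{D12} and \eqref{D:nondeg}), since the constant in Corollary~\ref{cor:unavbd} enjoys the same uniformity.

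There is no real obstacle here: the work has already been done in Section~\ref{s3}, and this lemma is essentially a bookkeeping step combining Corollary~\ref{cor:unavbd} with the heat-semigroup estimates of Lemma~\ref{lplqestimates}. The only minor point to check is that the value $t=0$ is covered, but this follows from the assumed regularity $v_0\in W^{1,\infty}(\Om)$ in \eqref{eq:initdatacond}, so one simply enlarges $C$ if necessary to accommodate the initial time.
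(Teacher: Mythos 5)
Your proposal is correct and follows exactly the paper's argument: fix $p>n$, invoke Corollary \ref{cor:unavbd} for the uniform $L^p$-bound on $u$, and then apply Lemma \ref{lplqestimates} (iii) and (iv). The remark about covering $t=0$ via the regularity of $v_0$ is a harmless addition.
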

\begin{proof}
 We employ Corollary \ref{cor:unavbd} for some $p>n$ to gain a time-uniform bound on $\norm[L^p(\Om)]{u(\cdot,t)}$. Then Lemma \ref{lplqestimates} (iii), (iv) show that there is $c_1>0$ such that $\norm[L^\infty(\Om)]{\na v(\cdot,t)}\leq c_1$ and $\norm[L^\infty(\Om)]{v(\cdot,t)}\leq c_1$ for all $t\in(0,\Tmax)$.
\end{proof}

Turning Corollary \ref{cor:unavbd} into boundedness of $u$ is more difficult.
The lack of a bound on $\na w$ makes $\norm[L^\infty(\Om)]{u(t)}$ inaccessible for general boundedness results like \cite[Lemma A.1]{Tao-Winkler-jde}. We proceed somewhat similarly to \cite{tao2014boundedness}:
\begin{lem}\label{lem:ubd}
 Let $\mu, \chi, \xi\ge 0$, $\delta>0$ and $m>2-\frac2n$. There exists $C>0$ such that for any function $D$ satisfying \eqref{D12} and \eqref{D:nondeg}, the solution to \systemref\ fulfils
\[
 \norm[L^\infty(\Om)]{u(\cdot,t)} \leq C
\]
for all $t\in [0,\Tmax)$.
\end{lem}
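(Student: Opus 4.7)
The plan is a Moser--Alikakos iteration based on the $L^p$ estimate of Lemma \ref{l4}, exploiting the $W^{1,\infty}$-bound on $v$ furnished by Lemma \ref{lem:vbd} and the one-sided pointwise bound $-\Delta w\leq \norm[L^\infty(\Om)]{w_0}v+K$ from Lemma \ref{lem:bdDeltaw}. Since these results yield $\norm[L^\infty(\Om)]{v(\cdot,t)}+\norm[L^\infty(\Om)]{\na v(\cdot,t)}\leq c_0$ uniformly in $t\in(0,\Tmax)$, the dangerous terms on the right-hand side of \eqref{p1} are controlled by $\io u^{p-m+1}|\na v|^2 \leq c_0^2\io u^{p-m+1}$ and $\xi\norm[L^\infty(\Om)]{w_0}\io u^p v\leq c_1\io u^p$, and the elementary inequality $u^{p-m+1}\leq u^p+1$ (valid since $m\geq 1$) reduces \eqref{p1} to
\[
\ddt\io u^p + \frac{2\delta p(p-1)}{(p+m-1)^2}\io|\na u^{(p+m-1)/2}|^2 \leq c_2 p^2 \io u^p + c_2 p^2
\]
for every $p\geq 2$, with $c_2$ independent of $p$.

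The next step is to use the Gagliardo--Nirenberg inequality from Section \ref{s2} applied to $f:=u^{(p+m-1)/2}$ together with Young's inequality in order to absorb $\io u^p=\norm[L^{2p/(p+m-1)}(\Om)]{f}^{2p/(p+m-1)}$ into the dissipation $\io|\na f|^2$, paying the price of a lower-order term $(\io u^q)^\theta$ for some $q<p$. Because Corollary \ref{cor:unavbd} already supplies a time-uniform bound on $\io u^q$ for every fixed $q\in[1,\infty)$, this yields an ODI
\[
 y_p'(t)+\kappa p\, y_p(t) \leq A(p), \qquad y_p(t):=\io u^p(\cdot,t),
\]
and hence, by ODE comparison, a bound $\sup_{t\in(0,\Tmax)}\io u^p\leq c(p)$ with an explicit dependence on $p$. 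Choosing a doubling sequence $p_k = 2^k p_0$ (with $p_0$ large) and tracking $M_k:=\sup_{t\in(0,\Tmax)}\norm[L^{p_k}(\Om)]{u(\cdot,t)}$, this produces a recursion of the shape $M_k \leq C\, p_k^{\alpha}\,(\max\{1,M_{k-1}\})^{\gamma}$ with $\gamma$ independent of $k$.

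The hard part is the bookkeeping in this last step: one must verify that the Gagliardo--Nirenberg exponent $\gamma$ in the recursion stays uniformly less than the doubling factor $p_k/p_{k-1}=2$, so that taking $p_k$-th roots yields $M_k^{1/p_k}$ bounded as $k\to\infty$ rather than diverging doubly exponentially. Because $m>2-\frac2n$ provides ample room beyond the minimal requirement $m>1$ used above, the interpolation exponent meets this condition, the iteration closes, and one obtains the desired uniform-in-time bound $\norm[L^\infty(\Om)]{u(\cdot,t)}\leq C$ on $[0,\Tmax)$.
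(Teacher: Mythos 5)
Your overall strategy is the same as the paper's: after Lemma \ref{lem:vbd} renders $v$ and $\na v$ uniformly bounded, the right-hand side of \eqref{p1} collapses to terms of the form $c\,p^2\io u^{p}$ plus lower order, and a Moser--Alikakos iteration on the resulting family of differential inequalities gives the $L^\infty$ bound. The differences in execution are minor: the paper bounds $u^{p-m+1}\le u^{p+m-1}+1$ so that the term to be absorbed is exactly $\norm[L^2(\Om)]{u^{(p+m-1)/2}}^2$, absorbs $c_3p\io u^p$ into $-\mu p\io u^{p+1}$ rather than into the dissipation, and uses the recursion $p_k=2p_{k-1}+1-m$ (instead of $p_k=2^kp_0$) precisely so that the interpolation remainder is $\bigl(\io u^{p_{k-1}}\bigr)^2=M_{k-1}^2$ on the nose.

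The one point that needs correcting is the criterion you single out as ``the hard part.'' As stated it cannot be verified: the natural $L^2$--$L^1$ Gagliardo--Nirenberg interpolation of $f=u^{(p+m-1)/2}$, namely $\norm[L^2(\Om)]{f}^2\le\eta\norm[L^2(\Om)]{\na f}^2+C_\eta\norm[L^1(\Om)]{f}^2$, produces the recursion exponent $\gamma=2$ exactly (and your variant, absorbing $\io u^p$ itself, yields $\gamma_k\to2$), so $\gamma$ is \emph{not} uniformly below the doubling factor; moreover $m>2-\frac2n$ buys no extra room here --- that hypothesis was already spent in Corollary \ref{cor:unavbd} and Lemma \ref{lem:vbd}, and only $m\ge1$ enters the bootstrap. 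Fortunately $\gamma=2$ suffices: from $M_k\le a^kM_{k-1}^2$ one gets $M_k\le a^{k+\sum_{j=1}^{k-1}2^j(k-j)}M_0^{2^k}$ as in \eqref{eq:Mk}, and since $k+\sum_{j=1}^{k-1}2^j(k-j)\le2^{k+1}$ while $2^k/p_k$ stays bounded, $M_k^{1/p_k}$ remains bounded as $k\to\infty$. So the condition to check is $\gamma\le p_k/p_{k-1}$, not strict inequality; with that correction (and with care that your choice $p_k=2^kp_0$ forces you to interpolate $\io u^{(p_k+m-1)/2}$ against $\io u^{p_{k-1}}$, which the paper's choice of $p_k$ avoids), your argument closes.
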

\begin{proof}
From Lemma \ref{lem:vbd} we obtain $c_1>0$ such that $\norm[L^\infty(\Om)]{\na v(\cdot,t)}\leq c_1$ and $\norm[L^\infty(\Om)]{v(\cdot,t)}\leq c_1$ for all $t\in(0,\Tmax)$.

First let us fix $p_0>\frac32(m-1)$ so large that $\frac{p(p-1)}{(p+m-1)^2}\in(\frac12,\frac32)$ for all $p\in[p_0,\infty)$.

From Lemma \ref{l4} we see that
\begin{align*}
 \frac1p\ddt \io u^p + \frac{\delta(p-1)}{(p+m-1)^2}\io  |\nabla u^{\frac{p+m-1}2}|^2 \leq& \frac{\chi^2(p-1)}\delta \io u^{p-m+1}|\na v|^2 \\ &+ \xi\norm[L^\infty(\Om)]{w_0}\io u^p v + (\mu+\xi K)\io u^p-\mu\io u^{p+1}
\end{align*}
with $K$ as in \eqref{eq:defK} and for all $p\geq p_0, t\in(0,\Tmax)$. We may conclude
\begin{align}\label{eq:inequp}
 \ddt\io u^p+\io u^p + \frac\delta2 \io |\na u^{\frac{p+m-1}2}|^2 \leq& p^2 c_2 \io u^{p-m+1}+c_3p\io u^p-\mu p\io u^{p+1}\\
\leq& p^2 c_2 \io u^{p+m-1}+c_3p\io u^p-\mu p\io u^{p+1}+p^2c_2|\Om|\nn
\end{align}
for all $p\geq p_0$ and all $t\in (0,\Tmax)$, where
\[
 c_2=c_1^2\frac{\chi^2}\delta\quad\mbox{ and }\quad c_3=\xi\norm[L^\infty(\Om)]{w_0}c_1+\mu+\xi K+1 \quad \mbox{ and } \delta < \frac{\delta}2 \frac{4p(p-1)}{(p+m-1)^2}. 
\]
According to Young's inequality we may estimate
\[
 c_3p\io u^p\leq \mu p\io u^{p+1}+ \frac{p}{p+1}\left(\frac{p+1}p\mu\right)^{-p}c_3^{p+1}|\Om|.
\]
Furthermore, with $a=\frac n{n+2}$ we have $\frac12=(\frac12-\frac1n)a+\frac{1-a}1$, so that the Gagliardo-Nirenberg inequality gives $c_4, c_5>0$ such that by Young's inequality
\begin{align*}
 \norm[L^2(\Om)]{\phii}^2 \leq& c_4(\norm[L^2(\Om)]{\na\phii}^{\frac n{n+2}}\norm[L^1(\Om)]{\phii}^{\frac2{n+2}})^2+c_5\norm[L^1(\Om)]{\phii}^2\\
\leq& \frac{\delta}{2p^2 c_2}\norm[L^2(\Om)]{\na\phii}^2+\left(c_5+\frac{2}{n+2}(\frac{2n}{n+2}\frac{p^2c_2}{\delta})^{\frac{n}{2}}c_4^{\frac{n+2}{2}}\right)\norm[L^1(\Om)]{\phii}^2
\end{align*}
for all $\phii\in W^{1,2}(\Om)$.

If we apply this to $\phii=u^{\frac{p+m-1}2}$ and insert these estimates into \eqref{eq:inequp}, we arrive at
\begin{equation}\label{eq:ddtuppup}
 \ddt\io u^p+\io u^p \leq \left(\frac{c_3 p}{p+1}\right)^{p+1}\!\!\!\!\!\!\!\! \mu^{-p} |\Om|+p^2c_2\left(c_5+\frac{2}{n+2}(\frac{2n}{n+2}\frac{p^2c_2}{\delta})^{\frac{n}{2}}c_4^{\frac{n+2}{2}}\right)\left(\io u^{\frac{p+m-1}2}\right)^2+p^2c_2|\Om|
\end{equation}
for any $p\geq p_0$ and $t\in(0,\Tmax)$.

We then recursively define $p_k:=2p_{k-1}+1-m$ for $k\in\N$ and note that $p_0>\frac32(m-1)$ and $m\geq 1$ by an inductive argument lead to $p_0\geq \frac{p_k}{2^k}\geq(\frac12+2^{-k})(m-1)>0$ for all $k\in\N$ (at least if $m>1$; if $m=1$, $p_k=2^k p_0$), which obviously entails the existence of $c_6,c_7>0$ such that \begin{equation}\label{eq:pkest} c_6\leq \frac{p_k}{2^k}\leq c_7\end{equation} for any $k\in\N$.
Furthermore, we fix $T\in (0,\Tmax)$ and denote
\[
 M_k := \sup_{t\in(0,T)} \io u^{p_k}(t).
\]
From \eqref{eq:ddtuppup} we hence conclude
\begin{align*}
 \io u^{p_k} \leq& \io u_0^{p_k} \\
 +& \int_0^t e^{-(t-\tau)} \left(\left(\frac{c_3 p}{p+1}\right)^{p+1}\!\!\!\!\!\!\!\! \mu^{-p} |\Om|+\left(p^2c_2c_5+
 (\frac{2c_2c_4}{n+2})^{\frac{n+2}2} n^{\frac n2}\delta^{-\frac n2}p^{n+2}\right)M_{k-1} +p^2c_2|\Om| \right) d\tau,
\end{align*}
that is
\[
 M_k \leq \norm[L^{p_k}(\Om)]{u_0}^{p_k} + \mu^{-p_k}c_3^{p_k+1}|\Om| + p_k^2c_2|\Om| + c_8 p_k^{n+2} M_{k-1}^2
\]
for all $k\in\N$ with
\[
 c_8:=c_2c_5+(\frac{2c_2c_4}{n+2})^{\frac{n+2}2} n^{\frac n2}\delta^{-\frac n2}. 
\]
Either there is a subsequence $(p_{k_l})_l$ such that
\[
 c_8 p_{k_l}^{n+2} M_{k_l-1}^2 \leq \norm[L^{p_k}(\Om)]{u_0}^{p_k} + \mu^{-p_k}c_3^{p_k+1}|\Om| + p_k^2c_2|\Om|
\]
and hence
\begin{align*}
\norm[L^\infty(\Om)]{u}\leq& \limsup_{l\to\infty} M_{k_l}^{\frac1{p_{k_l}}}\\
 \leq& \limsup_{l\to\infty} \left(2\left[\norm[L^{p_k}(\Om)]{u_0}^{p_k} + \mu^{-p_k}c_3^{p_k+1}|\Om| + p_k^2c_2|\Om|\right]\right)^{\frac1{p_{k_l}}}\leq \norm[L^\infty(\Om)]{u_0} + \frac{c_3}\mu +1
\end{align*}
or for all sufficiently large $k$ we have $M_k\leq 2 c_8p_k^{n+2} M_{k-1}^2$ and hence
\[
 M_k\leq c_9p_k^{n+2}M_{k-1}^2\leq c_9c_7 (2^k)^{n+2} M_{k-1}^2
\]
for some $c_9>0$ and for all $k\in \N$ by \eqref{eq:pkest} and thus
\[
 M_k\leq a^k M_{k-1}^2 \qquad \mbox{ for all }k\in\N \qquad \mbox{with }a=\max\set{c_9c_7,1} 2^{n+2}.
\]
Induction leads to
\begin{equation}\label{eq:Mk}
 M_k\leq a^{k+\sum_{j=1}^{k-1} 2^j(k-j)} M_0^{2^k}.
\end{equation}
Since by another inductive argument we have $k+\sum_{j=1}^{k-1} 2^j (k-j) \leq 2^{k+1}$ for all $k\in\N$ and because $\frac{2^k}{p_k}$ is bounded, we hence obtain from \eqref{eq:Mk} that
\[
 M_k^{\frac1{p_k}}\leq a^{2\frac{2^k}{p_k}} M_0^{\frac{2^k}{p_k}} \leq (a^2 M_0)^{\frac1c_6}=:c_{10}.
\]
Taking $k\to \infty$, we finally arrive at
\[
 \sup_{t\in(0,T)} \norm[L^\infty(\Om)]{u(\cdot,t)} \leq c_{10}
\]
and due to the arbitrarity of $T$, this shows the claim.
\end{proof}

We are now in position to pass to the proof of Theorem 1.1.

\begin{proof}[Proof of Theorem 1.1]
According to Lemma \ref{local existence}, there exists a classical local-in-time solution $(u,v,w)$ to \systemref\ on $[0,\Tmax)$.
By Lemmata \ref{lem:ubd}, \ref{lem:vbd} and \ref{local existence}, $u$, $v$ and $w$ are uniformly bounded on $[0,\Tmax)$ and consequently \eqref{T} makes it possible to conclude $\Tmax=\infty$.
\end{proof}

\section{Degenerate diffusion and global weak solutions. Proof of Theorem \ref{Theorem2}}\label{s4}
The goal of this section is to prove Theorem \ref{Theorem2}.  In the absence of (\ref{D:nondeg}), the first equation of system \systemref\ may be degenerate at $u=0$ and we cannot hope for classical solutions. Therefore we introduce the definition of weak solution to \systemref\ which we shall pursue here.
\begin{defn}\label{def:weaksol}
Let $T\in(0, \infty]$ and $\Omega\subset\mathbb{R}^n$ be a bounded domain with smooth boundary.  A triple $(u, v, w)$ of nonnegative functions defined on $\Omega\times(0, T)$ is called a weak solution to \systemref\ on $[0, T)$ if\\
(i) $u\in L^2_{loc}([0, T);L^2(\Omega))$, $v\in L^2_{loc}([0, T);W^{1,2}(\Omega))$ and $w\in L^{2}_{loc}([0, T);W^{1,2}(\Omega))$;\\
(ii) $D(u)\nabla u\in L^1_{loc}([0, T);L^1(\Omega))$;\\
(iii) $(u, v, w)$ satisfies \systemref\ in the sense that for every $\varphi\in C_0^{\infty}(\Ombar\times[0, T))$,
\begin{eqnarray}
-\int_0^T\io u\varphi_t-\io u_0(x)\varphi(x, 0)dx&=&-\int_0^T\io D(u)\nabla u\cdot\nabla\varphi+\chi\int_0^T\io u\nabla v\cdot\nabla\varphi\nn\\
&&+\xi\int_0^T\io u\nabla w\cdot\nabla\varphi+\mu\int_0^T\io u\varphi-\mu\int_0^T\io  u^2\varphi\nonumber\\
&&-\mu\int_0^T\io uw\varphi;\label{eq:wsoldef_u}
\end{eqnarray}
holds as well as
\begin{equation}
-\int_0^T\io v\varphi_t-\io v_0(x)\varphi(x, 0)=-\int_0^T\io \nabla v\cdot\nabla\varphi-\int_0^T\io v\varphi+\int_0^T\io u\varphi \label{eq:wsoldef_v}
\end{equation}
and
\begin{equation}
-\int_0^T\io w\varphi_t-\io w_0(x)\varphi(x, 0)=-\int_0^T\io vw\varphi. \label{eq:wsoldef_w}
\end{equation}
In particular, if $T=\infty$ can be taken, then $(u,v,w)$ is called a global-in-time weak solution to $\systemref$.
\end{defn}
In order to obtain a weak solution to \systemref, we start with the approximate problem, for $\eps\in(0,1)$ given by
\begin{eqnarray}\label{*1}
\left\{\begin{array}{lll}
     \medskip
     u_{\varepsilon t}=\nabla\cdot(D_{\varepsilon}(\ue )\nabla \ue )-\chi\nabla\cdot(\ue \nabla v_{\varepsilon})-\xi\nabla\cdot(\ue \nabla w_{\varepsilon})+\mu \ue (1-\ue -w_{\varepsilon}),\ \ \ &x\in \Omega,\ t>0,\\
      \medskip
     v_{\varepsilon t}=\Delta v_{\varepsilon}-v_{\varepsilon}+\ue ,\ \ &x\in \Omega,\ t>0,\\
      \medskip
     w_{\varepsilon t}=-v_{\varepsilon}w_{\varepsilon},\ \ &x\in \Omega,\ t>0,\\
      \medskip
      \frac{\partial \ue }{\partial \nu}=\frac{\partial v_{\varepsilon}}{\partial \nu}=\frac{\partial w_{\varepsilon}}{\partial \nu}=0,\ \ &x\in\partial\Omega,\ t>0,\\
      \medskip
      \ue (x, 0)=u_{0}(x),\ v_{\varepsilon}(x, 0)=v_{0}(x),\ w_{\varepsilon}(x, 0)=w_{0}(x)\ \ &x\in\Omega
 \end{array}\right.
\end{eqnarray}
where $D_{\varepsilon}(s)=D(s+\varepsilon)$ for all $s\geq0$.

Since $D\in C^2([0,\infty))$ fulfils $D(s)\geq \delta s^{m-1}$ for all $s>0$, also for $D_\eps$ the estimate
\begin{equation}\label{eq:Depsest}
 D_\eps(s)=D(s+\eps)\geq \delta(s+\eps)^{m-1}\geq \delta s^{m-1}
\end{equation}
holds with the same values of $\delta$ and $m$, which, notably, are independent of $\eps$.
Furthermore,
\begin{equation}\label{eq:Depsnull}
 D_\eps(0)=D(\eps)>\delta \eps^{m-1} >0.
\end{equation}
Accordingly, Theorem \ref{Theorem1} and the lemmata from its proof become applicable so as to yield global classical bounded solutions $(\ue,\ve,\we)$ to \eqref{*1} and the following bounds, uniformly in $\eps>0$:
\begin{lem}\label{lemma4.1}
Let $n\in\{2, 3, 4\}, \chi, \xi, \mu>0$ and suppose that $m>2-\frac{2}{n}$ and $\delta>0$. Assume that the initial data $(u_0, v_0, w_0)$ satisfy $(\ref{eq:initdatacond})$ and the diffusion function $D$ fulfills (\ref{D12}).  Then for any $p>1$ there exists a constant $C>0$ such that
\begin{align}
 \norm[L^\infty(\Om)]{\ue(\cdot,t)}\leq& C, \label{40}\\
 \norm[W^{1,\infty}(\Om)]{\ve(\cdot,t)}\leq& C, \label{vinfty}\\
 \norm[L^\infty(\Om)]{\we(\cdot,t)}\leq& C, \label{winfty}\\
 \intnt\io \De(\ue)\ue^{p-2}|\na\ue|^2\leq& C(1+t),\label{41}\\
\intnt\io \ue^{m+p-3}|\na \ue|^2 \leq& C(1+t),\label{42}\\
 \io |\na\ve(\cdot,t)|^2\leq& C\label{43},\\
 \intnt\io |\na\we|^2\leq& C(1+t).\label{44}
\end{align}
 for all $t\in(0,\infty)$.
\end{lem}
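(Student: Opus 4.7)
My plan is to apply Theorem~\ref{Theorem1} and the machinery from Section~\ref{s3} to the regularised problem \eqref{*1}, exploiting the crucial fact that every constant produced there depends on $D$ only through the structural parameters $\delta$ and $m$. First I would verify that $\De$ satisfies \eqref{D12} and \eqref{D:nondeg} \emph{with constants independent of $\eps$}: the relations \eqref{eq:Depsest} and \eqref{eq:Depsnull} yield $\De(s)\ge\delta s^{m-1}$ for all $s>0$ and $\De(0)=D(\eps)>0$. Hence Theorem~\ref{Theorem1} furnishes a global classical solution $(\ue,\ve,\we)$ to \eqref{*1} for every $\eps\in(0,1)$, and a simple tracing through Lemmas~\ref{l4}--\ref{lem:ubd} shows that the bounds \eqref{40} and \eqref{vinfty} hold with $\eps$-independent constants. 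The bound \eqref{winfty} is the third inequality of \eqref{l1} in Lemma~\ref{local existence}, applied to the $\eps$-problem, and \eqref{43} is immediate from \eqref{vinfty}.

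For \eqref{41} and \eqref{42} I would integrate the differential inequality \eqref{p1} of Lemma~\ref{l4} (applied to \eqref{*1}) over $(0,t)$. The right-hand side of \eqref{p1} consists of $\io \ue^{p-m+1}|\na\ve|^2$, $\io \ue^p\ve$, and $(\mu+\xi K)\io \ue^p-\mu\io \ue^{p+1}$, each of which is controlled by a constant depending on $p$ and on the uniform bounds \eqref{40}, \eqref{vinfty}. Moving the non-negative contribution $\tfrac{1}{p}\io \ue^p(\cdot,t)$ to the right side gives \eqref{41}; the pointwise identity $\ue^{m+p-3}|\na\ue|^2 = \frac{4}{(p+m-1)^2}|\na\ue^{(p+m-1)/2}|^2$ then translates into \eqref{42}.

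The estimate \eqref{44} is the one step not already supplied by Section~\ref{s3}, and here I would argue independently. Since $\partial_\nu w_0=0$ and $\partial_\nu \ve=0$ on $\partial\Om$, the pointwise ODE $\we_t=-\ve\we$ propagates the Neumann condition, so $\partial_\nu\we=0$ throughout $\partial\Om\times(0,\infty)$. Integration by parts then gives
\[
 \io |\na\we|^2 \;=\; -\io \we\,\Delta\we \;\leq\; \io \we\bigl(\|w_0\|_{L^\infty(\Om)}\ve+K\bigr),
\]
where in the last inequality I use Lemma~\ref{lem:bdDeltaw} together with $\we\ge 0$. The uniform bounds \eqref{vinfty} and \eqref{winfty} now show that $\io |\na\we(\cdot,t)|^2$ is bounded uniformly in $t$ and $\eps$, so a further time integration yields $\intnt \io |\na\we|^2 \le Ct\le C(1+t)$, as required.

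The genuine difficulty is not mathematical but one of careful bookkeeping: the whole argument hinges on the observation that throughout Section~\ref{s3} the assumption \eqref{D:nondeg} enters only qualitatively (to guarantee classical solvability), while the quantitative estimates rely solely on \eqref{D12}. Once this is checked, every bound obtained there transfers verbatim to $(\ue,\ve,\we)$ with an $\eps$-independent constant, and the lemma follows by combining those estimates with the additional argument for \eqref{44}.
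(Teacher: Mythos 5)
Your proposal is correct and follows essentially the same route as the paper: \eqref{40}--\eqref{winfty} and \eqref{43} from Theorem~\ref{Theorem1} applied to \eqref{*1} with the $\eps$-uniform structural bounds \eqref{eq:Depsest}--\eqref{eq:Depsnull}, \eqref{41}--\eqref{42} by time-integration of \eqref{p1}, and \eqref{44} via $\io|\na\we|^2=-\io\we\Delta\we$ combined with Lemma~\ref{lem:bdDeltaw}. The paper's proof is exactly this argument, including your emphasis that the constants depend on $D$ only through $\delta$ and $m$.
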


\begin{proof} The bounds in \eqref{40}, \eqref{vinfty} and \eqref{winfty} are direct conclusions of Theorem \ref{Theorem1}. The estimates
\eqref{41}, \eqref{42} can be gained directly from Lemma \ref{l4} by integration with respect to time. From \eqref{vinfty} we furthermore know there exists a constant fulfilling (\ref{43}). 
Moreover, according to Lemma \ref{lem:bdDeltaw}, we have
\[
 \io |\na \we|^2 = -\io \we\Delta \we \leq \norm[L^\infty]{w_{0\eps}} \io \ve\we + (K+1)\io \we,
\]
so that the boundedness of $\intnT\io |\na\we|^2$ in \eqref{44} results from the bounds on $\ve$ and $\we$.
\end{proof}

We can use these bounds in a straightforward manner to extract convergent subsequences of $\ve, \we$:
\begin{lem}\label{lem:subseq_vw}
 There exist $v,w\in L^2_{loc}((0,\infty),W^{1,2}(\Om))\cap L^\infty(\Om\times(0,\infty))$ and a sequence $(\eps_k)_{k\in\N}\searrow 0$ such that
\begin{align}
 v_{\varepsilon}&\rightharpoonup v &&\mbox{in}\ \ L^2_{loc}((0, \infty),L^2(\Omega))\label{sh5},\\
 \ve& \wstarto v &&\mbox{in}\ \ L^\infty(\Om\times(0,\infty))\label{conv:velinfty}\\
 \na\ve& \wstarto \na v &&\mbox{in}\ \ L^\infty(\Om\times(0,\infty))\label{conv:navelinfty}\\
w_{\varepsilon}&\rightharpoonup w &&\mbox{in}\ \ L^2_{loc}((0, \infty),L^2(\Omega)\label{sh7},\\
\nabla w_{\varepsilon}&\rightharpoonup\ \nabla w\ &&\mbox{in}\ \ L^2_{loc}((0, \infty),L^2(\Omega)\label{sh8},\\
\we& \wstarto w &&\mbox{in}\ \ L^\infty(\Om\times(0,\infty))\label{conv:welinfty}
 \end{align}
 as $\eps=\eps_k\searrow 0$.
\end{lem}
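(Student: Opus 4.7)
The plan is to extract weak-$*$ and weak limits from the uniform-in-$\eps$ estimates collected in Lemma \ref{lemma4.1} via the Banach--Alaoglu theorem, then to pass to a common subsequence by a diagonal argument, and finally to identify the weak limits of the gradient sequences with the gradients of the weak limits themselves. Nothing more sophisticated is needed, since Lemma \ref{lemma4.1} already supplies bounds that hold uniformly on the whole half-line $(0,\infty)$ or at worst with linear-in-$t$ growth over finite windows.

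Concretely, first I would read off from \eqref{vinfty} that $(\ve)_{\eps\in(0,1)}$ and $(\na\ve)_{\eps\in(0,1)}$ are bounded in $L^\infty(\Om\times(0,\infty))$, while \eqref{winfty} gives a uniform $L^\infty(\Om\times(0,\infty))$ bound for $(\we)_{\eps}$, and \eqref{44} a uniform bound for $(\na\we)_{\eps}$ in $L^2((0,T);L^2(\Om))$ for every fixed $T>0$. Since $L^\infty(\Om\times(0,\infty))$ is the dual of the separable space $L^1(\Om\times(0,\infty))$ and $L^2(\Om\times(0,T))$ is reflexive, Banach--Alaoglu combined with a diagonal extraction over the exhaustion $T=1,2,3,\dots$ yields a sequence $\eps_k\searrow 0$ and functions $v,w\in L^\infty(\Om\times(0,\infty))$, $V\in L^\infty(\Om\times(0,\infty);\R^n)$, and $W\in L^2_{loc}([0,\infty);L^2(\Om;\R^n))$ such that $\ve\wstarto v$, $\we\wstarto w$, and $\na\ve\wstarto V$ in $L^\infty$ along $\eps=\eps_k$, while $\na\we\rightharpoonup W$ in $L^2_{loc}((0,\infty);L^2(\Om))$. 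Restricting these weak-$*$ convergences to any bounded time window and using $L^2(\Om\times(0,T))\hookrightarrow L^1(\Om\times(0,T))$ immediately gives the weak $L^2_{loc}$ convergences asserted in \eqref{sh5} and \eqref{sh7}.

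The final point is to identify $V=\na v$ and $W=\na w$. Testing a component $V_i$ against $\phi\in C_c^\infty(\Om\times(0,\infty))$ and integrating by parts in the approximating expression,
\[
\int_0^\infty\!\io V_i\,\phi=\lim_{k\to\infty}\int_0^\infty\!\io(\partial_i\ve)\phi=-\lim_{k\to\infty}\int_0^\infty\!\io\ve\,\partial_i\phi=-\int_0^\infty\!\io v\,\partial_i\phi,
\]
shows that $V_i=\partial_i v$ in the sense of distributions, so $V=\na v$, which proves \eqref{conv:navelinfty}; the identical argument applied to $\we$ and $\na\we$ gives $W=\na w$ and hence \eqref{sh8}. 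Since the $L^\infty$ bound on $\na v$ together with $v\in L^\infty$ places $v$ in $L^2_{loc}((0,\infty);W^{1,2}(\Om))$, and similarly $w\in L^\infty\cap L^2_{loc}((0,\infty);W^{1,2}(\Om))$, the regularity claims in the statement are automatic.

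No step of this argument is actually hard; the only place one must be slightly careful is the identification of the gradient limits, which is why I separate out the distributional computation above. The substantive analytic content of the lemma is entirely contained in the uniform estimates of Lemma \ref{lemma4.1}, which were the output of the work of Section \ref{s3}.
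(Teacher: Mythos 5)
Your argument is correct and is exactly the standard compactness extraction that the paper's one-line proof ("this directly follows from the estimates \eqref{vinfty}, \eqref{winfty}, \eqref{44}") leaves implicit: weak-$*$/weak sequential compactness from the uniform bounds of Lemma \ref{lemma4.1}, a diagonal extraction over finite time windows, and the distributional identification of the gradient limits. Nothing is missing.
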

\begin{proof}
 This directly follows from the estimates \eqref{vinfty}, \eqref{winfty}, \eqref{44}.
\end{proof}

Due to the nonlinearities involved, we require better convergence properties of $\ue$.
As preparation for an Aubin-Lions-argument, let us first ensure boundedness of the time-derivatives in a certain weak sense. 
The reasoning is similar as in \cite[Proof of Theorem 1.1]{tao2013locally} and \cite[Proof of Theorem 1.3]{wang2014quasilinear}.
\begin{lem}\label{Lemma4.2}
Suppose that $m>2-\frac2n$. Let $\theta>\max\set{1,\frac{m}2}$. Then there is $r>1$ such that for any $T>0$ there exists $C>0$ such that
\begin{equation}\label{eq:normlew2rstar}
 \norm[L^1((0,T);(W^{2,r}_0(\Om))^*)]{(\ue^\theta)_t}\leq C
\end{equation}
holds true for any $\eps\in(0,1)$.
\end{lem}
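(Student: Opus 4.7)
The plan is to test $(\ue^\theta)_t$ against an arbitrary $\phi \in W^{2,r}_0(\Om)$ for some $r>n$ (so that $W^{2,r}_0(\Om) \hookrightarrow W^{1,\infty}(\Om)$) and to produce a pointwise-in-$t$ bound
\[
\left|\io (\ue^\theta)_t\,\phi\right| \leq f_\eps(t)\,\|\phi\|_{W^{2,r}(\Om)} \qquad \mbox{for a.e. } t \in (0,T),
\]
with a family $\{f_\eps\}_\eps$ bounded in $L^1(0,T)$ uniformly in $\eps \in (0,1)$; integrating in $t$ will then yield \eqref{eq:normlew2rstar}.

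Writing $(\ue^\theta)_t = \theta \ue^{\theta-1} \ue_t$, substituting the first equation of \eqref{*1}, integrating by parts (the boundary terms vanish thanks to $\partial_\nu\ue = \partial_\nu\ve = \partial_\nu\we = 0$, and no boundary condition on $\phi$ is needed), and using $\ue^{\theta-1}\na\ue = \theta^{-1}\na\ue^\theta$ to consolidate the mixed terms, the action decomposes as
\begin{align*}
\io (\ue^\theta)_t\phi =& -\theta(\theta-1)\io \ue^{\theta-2}\De(\ue)|\na\ue|^2\,\phi - \io \De(\ue)\,\na\ue^\theta\cdot\na\phi\\
&+\chi(\theta-1)\io\phi\,\na\ue^\theta\cdot\na\ve + \chi\theta\io \ue^\theta\,\na\ve\cdot\na\phi\\
&+\xi(\theta-1)\io\phi\,\na\ue^\theta\cdot\na\we + \xi\theta\io \ue^\theta\,\na\we\cdot\na\phi\\
&+\theta\mu\io \ue^\theta(1-\ue-\we)\phi.
\end{align*}

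I would then estimate the seven resulting integrals by means of the uniform bounds of Lemma \ref{lemma4.1}. The key ingredients are (i) the $L^\infty$-controls \eqref{40}--\eqref{winfty}, together with $\De(\ue)=D(\ue+\eps)\le M$ obtained from $D\in C^0([0,\infty))$ and the fact that $\ue+\eps$ ranges over a fixed compact set; (ii) the dissipation estimate \eqref{41} applied with $p=\theta>1$; (iii) the gradient estimate \eqref{42} applied with $p_\ast:=2\theta+1-m$, which is admissible (i.e., $p_\ast>1$) exactly under the hypothesis $\theta>m/2$, and which via $|\na\ue^\theta|^2=\theta^2\ue^{2\theta-2}|\na\ue|^2=\theta^2\ue^{m+p_\ast-3}|\na\ue|^2$ yields the crucial uniform bound $\intnT\io|\na\ue^\theta|^2\le C(T)$; and (iv) the estimate \eqref{44} on $\intnT\io|\na\we|^2$. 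With these in hand, the first integral is absorbed directly by (ii) together with $\|\phi\|_\infty\le c\|\phi\|_{W^{2,r}}$; the second by $M\,\|\na\phi\|_\infty\int|\na\ue^\theta|$, the $L^1_{t,x}$-bound on $\na\ue^\theta$ being obtained from (iii) by Cauchy--Schwarz in space and time; the chemotactic terms by $\|\na\ve\|_\infty$ combined with (iii) or with the $L^\infty$-bound on $\ue^\theta$; the haptotactic terms by pairing (iii) and (iv) via Cauchy--Schwarz in space and time; and the logistic-type last term by (i) alone. In every case the $\phi$-dependent factor is dominated by $\|\phi\|_{W^{1,\infty}(\Om)}\le c\|\phi\|_{W^{2,r}(\Om)}$.

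The main obstacle is that the degeneracy permitted in $D$ precludes any $\eps$-uniform lower bound on $\De(\ue)$, so gradient information on $\ue$ itself is not available from \eqref{41} alone. The workaround is the non-dissipative estimate \eqref{42}, and it is precisely the hypothesis $\theta>m/2$ that renders the exponent $p_\ast=2\theta+1-m$ admissible in Lemma \ref{l4}, unlocking the $\eps$-uniform $L^2_{t,x}$-bound on $\na\ue^\theta$ that drives the chemotactic and haptotactic cross-terms. The weaker requirement $\theta>1$ is used only for the dissipation term through \eqref{41}; together they account for the assumption $\theta>\max\{1,m/2\}$ in the lemma.
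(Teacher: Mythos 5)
Your proposal is correct and follows essentially the same route as the paper: the same choice of $r$ with $W^{2,r}_0(\Om)\hookrightarrow W^{1,\infty}(\Om)$, the same testing of the first equation against $\ue^{\theta-1}$ times a test function, the same seven-term decomposition after integration by parts, and the same reliance on the uniform bounds \eqref{40}--\eqref{44}. The only differences are cosmetic (you consolidate $\ue^{\theta-1}\na\ue=\theta^{-1}\na\ue^{\theta}$ and invoke \eqref{42} directly with $p_\ast=2\theta+1-m$, whereas the paper fixes a generic $p>1$ with $\theta\geq\max\{p,\tfrac{p+m-1}{2}\}$ and pulls out powers of $\|\ue\|_{L^\infty}$ via Young's inequality), and both correctly locate the role of the hypothesis $\theta>\max\{1,\tfrac m2\}$.
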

\begin{proof}
We choose $r>1$ so large that $W^{2,r}_0(\Omega)\hookrightarrow W^{1, \infty}(\Omega)$.
Using that $C_0^\infty(\Om)$ is dense in $W_0^{2,r}(\Om)$, we may express the norm as
\begin{equation}\label{eq:dualnorm}
 \norm[L^1((0,T);(W^{2,r}_0(\Om))^*)]{(\ue^\theta)_t}=\intnT\norm[(W^{2,r}_0(\Om))^*]{(\ue^\theta)_t} = \intnT \sup_{\zeta\in C_0^\infty(\Om), \norm[W^{2,r}(\Om)]{\zeta}\leq1} \io (\ue^\theta)_t \zeta.
\end{equation}
In order to estimate this integral, let $\zeta\in C_0^\infty(\Om)$ be such that $\norm[W^{2,r}(\Om)]{\zeta}\leq 1$ and hence $\norm[L^\infty]{\nabla\zeta}\leq c_1$ and $\norm[L^\infty(\Om)]{\zeta}\leq c_1$ with $c_1$ being the embedding constant of $W_0^{2,k}(\Omega)\hookrightarrow W^{1, \infty}(\Omega)$.
We then multiply the first equation in \eqref{*1} by $\ue^{\theta-1}\zeta$ and, upon an integration by parts, obtain
\begin{eqnarray}\label{te1}
\frac{1}{\theta}\io (\ue ^{\theta})_t\cdot\zeta
&=&-(\theta-1)\io D_{\varepsilon}(\ue )\ue ^{\theta-2}
|\nabla \ue |^2\zeta-\io D_{\varepsilon}(\ue )\ue ^{\theta-1}\nabla \ue \cdot\nabla\zeta\nonumber\\
&&+\chi(\theta-1)\io \ue ^{\theta-1}\nabla \ue \cdot\nabla v_{\varepsilon}\zeta+\chi\io \ue ^{\theta}\nabla v_{\varepsilon}\cdot\nabla\zeta\nonumber\\
&&+\xi(\theta-1)\io \ue ^{\theta-1}\nabla \ue \cdot\nabla w_{\varepsilon}\zeta+\xi\io \ue ^{\theta}\nabla w_{\varepsilon}\cdot\nabla\zeta\nonumber\\
&&+\mu\io \ue^{\theta}(1-u_{\varepsilon}-w_{\varepsilon})\zeta=: I_1+\ldots+I_7
\end{eqnarray}
on $(0,\Tmax)$. By \eqref{40} and \eqref{winfty}, there are $c_2,c_3>0$ such that $\norm[L^\infty(\Om\times(0,T))]{\ue}\leq c_2$ and $\norm[L^\infty(\Om\times(0,T))]{\we}\leq c_3$, so that on $(0,\Tmax)$
\begin{equation}\label{eq:I7}
 |I_7|\leq \mu \io \ue^\theta (1-\ue-\we) |\zeta| \leq \mu c_2^\theta\cdot (1+c_2+c_3)\cdot \norm[L^\infty(\Om)]{\zeta}|\Om| \leq \mu c_2^\theta (1+c_2+c_3) c_1|\Om|=:c_4.
\end{equation}
Also $I_4$ can be estimated rather easily. With the aid of \eqref{43}, we find $c_5>0$ such that $\io |\na \ve(\cdot,t)|^2\leq c_5$ for all $t\in(0,T)$ and hence
\begin{equation}\label{eq:I4}
 |I_4|\leq \chi c_2^\theta \io |\na \ve \na \zeta| \leq \chi c_2^\theta \left(\io |\na \ve|^2 \io |\na \zeta|^2 \right)^{\frac12}\leq \chi c_2^\theta c_5^{\frac12} c_1=:c_6 \qquad \mbox{ on }(0,\Tmax).
\end{equation}
We do not have as convenient bounds for $I_6$, but we can prepare an estimate of its integral by means of \eqref{44}. In order to do so, we use Young's inequality to see that with $c_7:=\xi c_2^\theta c_1\max\set{1,|\Om|}$ 
\begin{equation}\label{eq:I6}
|I_6|= \xi\io \ue ^{\theta}|\nabla \we\cdot\nabla\zeta|\leq \xi c_2^{\theta}c_1 \io (|\nabla \we|^2+1)\leq c_7+c_7\io |\na \we|^2 \qquad \mbox{ on }(0,\Tmax).
\end{equation}
We want to handle the terms containing $|\na\ue|^2$ by means of \eqref{42}. 
Therefore, let us fix $p>1$ such that $\theta\geq \max\set{p, \frac{p+m-1}2}$. Then by Young's inequality
\begin{align} \label{eq:I3}
|I_3|=&\chi(\theta-1)\bigg|\io \ue ^{\theta-1}\nabla \ue \cdot\nabla v_{\varepsilon}\zeta\bigg|\leq\left(\io \ue ^{2\theta-2}|\nabla \ue |^2+\io |\nabla v_{\varepsilon}|^2\right)\cdot\|\zeta\|_{L^{\infty}(\Omega)}\nonumber\\
\leq&\chi(\theta-1)\left(\|\ue \|_{L^{\infty}(\Omega)}^{2\theta-m-p+1}\cdot\io \ue ^{m+p-3}|\nabla \ue |^2+\io |\nabla v_{\varepsilon}|^2\right)\cdot\|\zeta\|_{L^{\infty}(\Omega)}\nn\\
\leq& \chi(\theta-1)\left(c_2^{2\theta-m-p+1}\io \ue ^{m+p-3}|\nabla \ue |^2+c_5\right)c_1 
 = c_8 \io \ue ^{m+p-3}|\nabla \ue |^2 +c_9
\end{align}
on $(0,\Tmax)$, where we have set $c_8=\chi(\theta-1)c_2^{2\theta-m-p+1}c_1$, $c_9=\chi(\theta-1)c_5c_1$. Again by Young's inequality, we obtain 
\begin{align}\label{eq:I5}
|I_5|=&\xi(\theta-1)\bigg|\io \ue ^{\theta-1}\nabla \ue \cdot\nabla w_{\varepsilon}\zeta\bigg|
\leq \xi(\theta-1)\left(\io \ue ^{2\theta-2}|\nabla \ue |^2+\io |\nabla w_{\varepsilon}|^2\right)\cdot\|\zeta\|_{L^{\infty}(\Omega)}\nonumber\\
\leq&\xi(\theta-1)\left(\|\ue \|_{L^{\infty}(\Omega\times(0, \infty))}^{2\theta+1-m-p}\io \ue ^{m+p-3}|\nabla \ue |^2+\io |\nabla w_{\varepsilon}|^2\right)\cdot\|\zeta\|_{L^{\infty}(\Omega)},\nonumber.\\
\leq&\xi(\theta-1)\left(c_2^{2\theta-m-p+1} \io \ue ^{m+p-3}|\nabla \ue |^2+\io|\na \we|^2\right)c_1\nn \\
\leq& c_{10} \io \ue ^{m+p-3}|\nabla \ue |^2 + c_{11} \io|\na \we|^2 \qquad \mbox{ on }(0,\Tmax), 
\end{align}
with the abbreviations $c_{10}=\xi(\theta-1)c_2^{2\theta-m-p+1}c_1$ and $c_{11}=\xi(\theta-1)c_1$.
For the remaining two terms, we note that $D_\eps(\ue)=D(\ue+\eps)\leq \max_{0\leq s\leq c_2+1} D(s) =:d$ and that, by our choice of $\theta$, $2\theta-p-m+1>0$, so that 
\begin{align}\label{eq:I2}
|I_2|=&\bigg|\io D_{\varepsilon}(\ue )\ue^{\theta-1}\nabla \ue \cdot\nabla\zeta\bigg| 
\leq \left(\io D_{\varepsilon}(\ue )\ue ^{p-2}|\nabla \ue |^2+\io D_{\varepsilon}(\ue )\ue ^{2\theta-p}|\nabla \ue |^2\right)\cdot\|\nabla\zeta\|_{L^{\infty}(\Omega)}\nn\\
\leq& \left(\io D_{\varepsilon}(\ue )\ue ^{p-2}|\nabla \ue |^2+dc_2^{2\theta-p-m+1} \io \ue ^{m+2-3}|\nabla \ue |^2\right)c_1
\end{align}
on $(0, \Tmax)$ and
\begin{eqnarray}\label{eq:I1}
|I_1|&=&(\theta-1)\bigg|\io D_{\varepsilon}(\ue )\ue ^{\theta-2}
|\nabla \ue |^2\zeta\bigg|\leq
(\theta-1)\|\ue \|_{L^{\infty}(\Om)}^{\theta-p}\left(\io D_{\varepsilon}(\ue )\ue ^{p-2}|\nabla \ue |^2\right)\|\zeta\|_{L^{\infty}(\Omega)}\nn\\
&\leq&c_{12} \io D_{\varepsilon}(\ue )\ue ^{p-2}|\nabla \ue |^2
\end{eqnarray}
for any $t\in (0,\Tmax)$, if we set $c_{12}=(\theta-1)c_2^{\theta-p}c_1$. 
Finally combining \eqref{eq:I7} -- \eqref{eq:I1} with \eqref{eq:dualnorm} and \eqref{te1}, we obtain
\begin{align*}
 \frac1\theta&\norm[L^1((0,T);(W^{2,r}_0(\Om))^*)]{(\ue^\theta)_t}\leq \intnT (I_1+\ldots+I_7)\\
 \qquad &\leq (c_{12}+c_1) \intnT\!\!\!\!\io \De(\ue)\ue^{p-2} |\na \ue|^2 + c_1dc_2^{2\theta-p-m+1} \intnT\!\!\!\!\io \ue^{m+2-3}|\na \ue|^2\\
 \qquad &+ (c_8+c_{10})\intnT\!\!\!\!\io \ue^{m+p-3}|\na\ue|^2+(c_{11}+c_7) \intnT\!\!\!\!\io |\na \we|^2 + (c_9+c_6+c_7+c_4)T.
%
\end{align*}
Here we can apply \eqref{41}, \eqref{42} twice (also for $p=2$) and \eqref{44}, so that we obtain an $\eps$-independent bound for the right-hand side and thus have proven \eqref{eq:normlew2rstar}.
\end{proof}

\begin{lem}\label{lem:subseq_u}
Let $m>2-\frac{2}{n}$.  Then there exists a subsequence $(\varepsilon_j)_{j\in\mathbb{N}}$ of the sequence provided by Lemma \ref{lem:subseq_vw} such that for all $T>0$
\begin{align}
\ue \rightarrow u  \qquad &\mathrm{a.e.}\ \mathrm{in}\ \ \Omega\times(0, T)\label{sh2},\\
\ue \rightarrow u \qquad &\mathrm{in}\ \ L^2((0, T),L^2(\Omega))\label{sh1},\\
D_{\varepsilon}(\ue )\nabla \ue \rightharpoonup D(u)\nabla u\ \quad&\mathrm{in}\ \ L^2((0, T),L^2(\Om))\label{sh9}.
\end{align}
\end{lem}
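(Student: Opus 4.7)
The plan is to combine an Aubin--Lions compactness argument for $\ue^\theta$ with a primitive trick for the identification of the weak limit of the flux $\De(\ue)\na\ue$. First I would fix $T>0$ and choose $\theta>\max\set{1,\tfrac{m}{2}}$. Setting $p:=2\theta-m+1>1$, the pointwise identity $|\na\ue^\theta|^2=\theta^2\ue^{m+p-3}|\na\ue|^2$ together with \eqref{42} yields a uniform-in-$\eps$ bound on $\intnT\io|\na\ue^\theta|^2$, while \eqref{40} shows $\ue^\theta\in L^\infty(\Om\times(0,T))$ uniformly. Hence $(\ue^\theta)$ is bounded in $L^2((0,T),W^{1,2}(\Om))$. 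Since Lemma \ref{Lemma4.2} supplies a uniform bound on $(\ue^\theta)_t$ in $L^1((0,T),(W^{2,r}_0(\Om))^*)$ for some $r>1$, and since $W^{1,2}(\Om)\hookrightarrow\hookrightarrow L^2(\Om)\hookrightarrow(W^{2,r}_0(\Om))^*$, the Aubin--Lions--Simon lemma provides a subsequence along which $\ue^\theta\to\psi$ strongly in $L^2((0,T),L^2(\Om))$ and, after a further extraction, pointwise a.e.\ in $\Om\times(0,T)$.

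Next I would set $u:=\psi^{1/\theta}$; continuity of $s\mapsto s^{1/\theta}$ on $[0,\infty)$ then yields \eqref{sh2}, and \eqref{40} combined with Lebesgue's dominated convergence theorem upgrades this to \eqref{sh1}. A standard Cantor diagonalization over an exhausting sequence $T_\ell\nearrow\infty$ produces a single subsequence of the one from Lemma \ref{lem:subseq_vw} which achieves these convergences for every $T>0$.

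For the third convergence, \eqref{40} together with continuity of $D$ on $[0,\|\ue\|_{L^\infty}+1]$ bounds $\De(\ue)=D(\ue+\eps)$ uniformly in $\Om\times(0,\infty)$ for $\eps\in(0,1)$, whence
\[
\intnT\io|\De(\ue)\na\ue|^2\leq\|\De(\ue)\|_{L^\infty(\Om\times(0,T))}\intnT\io\De(\ue)|\na\ue|^2\leq C
\]
by \eqref{41} with $p=2$. Thus, along a further subsequence, $\De(\ue)\na\ue\rightharpoonup\xi$ weakly in $L^2((0,T),L^2(\Om))$. To identify $\xi=D(u)\na u$ I would introduce the primitive $\Phi_\eps(s):=\int_0^s\De(\sigma)\,d\sigma$, for which the chain rule gives $\na\Phi_\eps(\ue)=\De(\ue)\na\ue$ pointwise. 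Because $\Phi_\eps\to\Phi:=\int_0^{\cdot}D$ locally uniformly and $\ue\to u$ a.e.\ while remaining uniformly bounded, dominated convergence gives $\Phi_\eps(\ue)\to\Phi(u)$ strongly in $L^2((0,T),L^2(\Om))$. Passing to the limit in the integration-by-parts identity
\[
\intnT\io\Phi_\eps(\ue)\,\mathrm{div}\,\varphi=-\intnT\io\De(\ue)\na\ue\cdot\varphi
\]
for $\varphi\in C_0^\infty(\Om\times(0,T);\R^n)$ yields $\na\Phi(u)=\xi$ distributionally, which by the usual convention is the meaning of $D(u)\na u=\xi$ and establishes \eqref{sh9}.

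The main obstacle is precisely this last identification: a.e.\ convergence of $\ue$ is too weak to pass to the limit directly inside the product $\De(\ue)\na\ue$, so one cannot avoid a device such as the primitive $\Phi_\eps$, which converts the nonlinear flux into the gradient of a composition and thereby enables a genuine strong--weak pairing in the limit. The bookkeeping around the choice of $\theta$ and $p$ is routine once one notices that $m>2-\tfrac{2}{n}>1$ leaves enough room to satisfy all three requirements ($\theta>1$, $\theta>\tfrac{m}{2}$, $p>1$) simultaneously.
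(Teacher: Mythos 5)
Your proposal is correct and follows essentially the same route as the paper: Aubin--Lions applied to $\ue^\theta$ (with the time-derivative bound from Lemma \ref{Lemma4.2}), a.e.\ convergence plus dominated convergence for \eqref{sh1}, and identification of the weak limit of the flux via a primitive of $\De$ combined with the strong convergence of $\Phi_\eps(\ue)$. The only cosmetic difference is that the paper takes the primitive with lower limit $-\eps$ so that $G_\eps(\ue)=\int_0^{\ue+\eps}D$, whereas you take lower limit $0$ and use local uniform convergence $\Phi_\eps\to\Phi$; both work.
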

\begin{proof}
We fix $p>1$ and $\theta=\frac{m+p-1}2$. Given $T>0$, the bound from \eqref{42} then shows that there is $C>0$ such that $\intnT\io |\na \ue^\theta|^2\leq C$ for any $\eps\in(0,1)$ and together with the uniform bound from \eqref{40} we infer that $(\ue ^{\theta})_{\varepsilon\in(0, 1)}$ is bounded in $L^2(0, T;W^{1, 2}(\Omega))$. From Lemma \ref{Lemma4.2}, we know that $((\ue ^{\theta})_t)_{\varepsilon\in(0, 1)}$ is bounded in $L^1((0, T);(W_0^{k, 2}(\Omega))^*)$. Since $W^{1,2}(\Om)\hookrightarrow\hookrightarrow L^2(\Om)\hookrightarrow (W_0^{k,2}(\Om))^*$, we therefore conclude from the Aubin-Lions compactness lemma \cite[Cor. 4]{Simon} that $(\ue ^{\theta})_{\varepsilon\in(0, 1)}$ is a relatively compact subset of the space $L^2(0, T;L^2(\Omega))$, whence there exists a sequence of numbers $\varepsilon=\varepsilon_j\searrow0$ such that $\ue ^{\theta}\rightarrow u^{\theta}$ in $L^2(0, T;L^2(\Omega))$ along this sequence with some function $u^\theta$ in this space, which, along a subsequence, ensures 
\eqref{sh2}. Together with \eqref{40}, Lebesgue's dominated convergence theorem therefore asserts \eqref{sh1} as well.\\
The boundedness of $\ue$ as warranted by \eqref{40} entails the uniform $L^\infty$-boundedness of $\De(\ue)$, so that inserting $p=2$ into \eqref{41} already ensures boundedness of $\De(\ue)\na\ue$ in $L^2((0,T), L^2(\Om))$, and after extracting a further subsequence we may assume that $\De(\ue)\na\ue$ converges weakly in $L^2((0,T),L^2(\Om))$. In the interest of identification of the limit, we resort to a primitive of $\De$, say $G_\eps(x):=\int_{-\eps}^x \De(s)ds=\int_0^x D(s)ds$. We already have observed that $\na [G_\eps(\ue)]= \De(\ue)\na\ue$ is weakly convergent in $L^2((0,T),L^2(\Om))$. Moreover, by \eqref{sh2} $\ue+\eps\to u$ a.e. in $\Om\times(0,T)$ so that 
\[
 G_\eps(\ue(\cdot))=\int_0^{\ue(\cdot)+\eps} D(s) ds \to \int_0^{u(\cdot)} D(s) ds =: G(u(\cdot))
\]
 a.e. in $\Om\times (0,T)$ and thus, again as consequence of \eqref{40} and Lebesgue's dominated convergence theorem, in $L^2((0,T),L^2(\Om))$. Therefore the weak limit of $\na[G_\eps(\ue)]$ has to coincide with $\na G(u)=D(u)\na u$, so that we arrive at \eqref{sh9}.
\end{proof}

\begin{proof}[Proof of Theorem \ref{Theorem2}]
 The convergence properties asserted in Lemma \ref{lem:subseq_u} and Lemma \ref{lem:subseq_vw} are sufficient to pass to the limit in each of the integrals making up the corresponding weak formulation associated with \eqref{*1} so that $(u,v,w)$ is a weak solution to \systemref\ in the sense of Definition \ref{def:weaksol}. The boundedness of $(u,v,w)$ in the sense of \eqref{eq:defbdness} results from \eqref{sh2}, \eqref{conv:velinfty}, \eqref{conv:welinfty} and \eqref{conv:navelinfty}.
\end{proof}

\section{Acknowledgements}
The first author has been supported by China Scholarship Council (No.201406090072) and in part by National Natural Science Foundation of China (No.11171063).


\end{document}